\newcommand{\email}[1]{\href{mailto:#1}{#1}}
\newcommand{\be}{\boldsymbol{e}}
\newcommand{\bef}{\boldsymbol{f}}
\newcommand{\bg}{\boldsymbol{g}}
\newcommand{\bq}{\boldsymbol{q}}
\newcommand{\bn}{\boldsymbol{n}}
\newcommand{\bu}{\boldsymbol{u}}
\newcommand{\bv}{\boldsymbol{v}}
\newcommand{\bw}{\boldsymbol{w}}
\newcommand{\bx}{\boldsymbol{x}}
\newcommand{\by}{\boldsymbol{y}}
\newcommand{\bz}{\boldsymbol{z}}
\newcommand{\bE}{\boldsymbol{E}}
\newcommand{\bF}{\boldsymbol{F}}
\newcommand{\bG}{\boldsymbol{G}}
\newcommand{\bI}{\boldsymbol{I}}
\newcommand{\bR}{\boldsymbol{R}}
\newcommand{\bU}{\boldsymbol{U}}
\newcommand{\bvs}{\boldsymbol{\mathfrak{v}}}
\newcommand{\bws}{\boldsymbol{\mathfrak{w}}}
\newcommand{\qs}{{\mathfrak{\phi}}}
\newcommand{\ps}{{\mathfrak{\psi}}}
\newcommand{\ts}{\boldsymbol{\mathfrak{\zeta}}}
\newcommand{\xis}{\boldsymbol{\mathfrak{\xi}}}
\newcommand{\Poly}[1]{\mathcal{P}^{#1}}
\newcommand{\Goly}[1]{\boldsymbol{\mathcal{G}}^{#1}}
\newcommand{\RTN}[1]{\boldsymbol{\mathcal{RT}}^{#1}}
\newcommand{\Mh}[1][h]{\mathcal{M}_{#1}}
\newcommand{\Th}[1][h]{\mathcal{T}_{#1}}
\newcommand{\Fh}[1][h]{\mathcal{F}_{#1}}
\newcommand{\Fhi}[1][h]{\mathcal{F}_{#1}^{{\rm i}}}
\newcommand{\Fhb}[1][h]{\mathcal{F}_{#1}^{{\rm b}}}
\newcommand{\Mhs}[1][h]{\mathfrak{M}_{#1}}
\newcommand{\Ths}[1][h]{\mathfrak{T}_{#1}}
\newcommand{\Fhs}[1][h]{\mathfrak{F}_{#1}}
\newcommand{\Fhsi}[1][h]{\mathfrak{F}_{#1}^{{\rm i}}}
\newcommand{\frakR}{\boldsymbol{\mathfrak{R}}}
\newcommand{\normal}{\bn}
\newcommand{\uline}[1]{\underline{#1}}
\newcommand{\Real}{\mathbb R}
\newcommand{\GRAD}{\nabla}
\newcommand{\DIV}{\nabla\cdot}
\newcommand{\ROT}{\nabla\times}
\newcommand{\ROTh}{\widehat{\nabla}\times}
\newcommand{\LAP}{\Delta}
\newcommand{\Ldeux}[1][\Omega]{{L}^2({#1})}
\newcommand{\Ldeuxd}[1][\Omega]{{L}^2({#1})^3}
\newcommand{\Ldeuxz}[1][\Omega]{{L}^2_0({#1})}
\newcommand{\Hun}[1][\Omega]{{H}^1({#1})}
\newcommand{\Ldeuxx}[1][\Omega]{{L}^2({#1})}
\newcommand{\Ldeuxxz}[1][\Omega]{{L}^2_0({#1})}
\newcommand{\Hund}[1][\Omega]{{H}^1({#1})^3}
\newcommand{\Hdiv}[1][\Omega]{{\boldsymbol{H}}_{\text{div}}({#1})}
\newcommand{\norm}[2]{\|#2\|_{#1}}
\newcommand{\seminorm}[2]{|#2|_{#1}}
\newcommand{\nnorm}[2][]{\|#2\|_{#1}}
\newcommand{\Reynolds}{\mathrm{Re}}
\newcommand{\bH}{\boldsymbol{H}}
\newcommand{\calF}{{\mathcal F}}
\newcommand{\calT}{{\mathcal T}}
\newcommand{\mxJ}{\boldsymbol{\mathbb J}}
\newcommand{\mxA}{\boldsymbol{\mathbb A}}
\newcommand{\bpsi}{{\boldsymbol \psi}}
\newcommand{\bpi}{{\boldsymbol \pi}}
\newcommand{\btau}{{\boldsymbol \tau}}
\newcommand{\bphi}{{\boldsymbol \phi}}
\DeclareMathOperator{\card}{card}
\newcommand*\xbar[1]{%
  \hbox{%
    \vbox{%
      \hrule height 0.75pt % The actual bar
      \kern0.5ex%         % Distance between bar and symbol
      \hbox{%
        \kern-0.1em%      % Shortening on the left side
        \ensuremath{#1}%
        \kern-0.1em%      % Shortening on the right side
      }%
    }%
  }%
} 
\newcounter{corr}
\definecolor{violet}{rgb}{0.580,0.,0.827}
\newcommand{\corr}[3]{\typeout{Warning : a correction remains in page
    \thepage}
				\stepcounter{corr}        
				{\color{blue}\ifmmode\text{\,\sout{\ensuremath{#1}}\,}\else\sout{#1}\fi}
        {\color{red}#2}
        {\color{violet} #3}}
\newtheorem{theorem}{Theorem}
\newtheorem{proposition}[theorem]{Proposition}
\newtheorem{lemma}[theorem]{Lemma}
\theoremstyle{remark}
\newtheorem{remark}[theorem]{Remark}
\theoremstyle{definition}
\newcommand{\figpath}{./figures}
\begin{document}
  \title{A pressure-robust HHO method for the solution of the incompressible Navier--Stokes equations on general meshes}

\author[1,2,3]{Daniel Castanon Quiroz\footnote{\email{danielcq.mathematics@gmail.com}}}
\author[3]{Daniele A. Di Pietro\footnote{\email{daniele.di-pietro@umontpellier.fr}}}

\affil[1]{Instituto de Investigaciones en Matemáticas Aplicadas y en Sistemas, Universidad Nacional Autónoma de México, Circuito Escolar s/n, Ciudad Universitaria C.P. 04510 Cd. Mx. (México)
}

\affil[2]{Universit\'e C\^ote d'Azur, CNRS, Inria team Coffee, LJAD, Nice, France}
\affil[3]{IMAG, Univ Montpellier, CNRS, Montpellier, France}\maketitle

\begin{abstract}
  In a recent work \cite{Castanon-Quiroz.Di-Pietro:20}, we have introduced a pressure-robust Hybrid High-Order method for the numerical solution of the incompressible Navier--Stokes equations on matching simplicial meshes.
  Pressure-robust methods are characterized by error estimates for the velocity that are fully independent of the pressure.
  A crucial question was left open in that work, namely whether the proposed construction could be extended to general polytopal meshes.
  In this paper we provide a positive answer to this question.
  Specifically, we introduce a novel divergence-preserving velocity reconstruction that hinges on the solution inside each element of a mixed problem on a subtriangulation, then use it to design discretizations of the body force and convective terms that lead to pressure robustness. An in-depth theoretical study of the properties of this velocity reconstruction, and their reverberation on the scheme, is carried out for arbitrary polynomial degrees $k\geq 0$ and meshes composed of general polytopes.
The theoretical convergence estimates and the pressure robustness of the method are confirmed by an extensive panel of numerical examples.
  \bigskip\\
  \textbf{Key words:} Hybrid High-Order methods, incompressible Navier--Stokes equations, general meshes, pressure robustness
  \medskip\\
  \textbf{MSC 2010:} 65N08, 65N30, 65N12, 35Q30, 76D05

\end{abstract}

%------------------------------------------------------------------------------%
%------------------------------------------------------------------------------%

\section{Introduction}

This paper focuses on numerical approximations of the Navier--Stokes equations robust with respect to large irrotational body forces.
Specifically, we address a nontrivial question left open in the previous work \cite{Castanon-Quiroz.Di-Pietro:20}, namely whether robustness can be achieved on general polyhedral meshes such as the ones supported by the Hybrid High-Order (HHO) method \cite{Di-Pietro.Ern:15,Di-Pietro.Droniou:20}.
\smallskip

Let $\Omega \subset \mathbb{R}^3$ denote an open, bounded, simply connected polyhedral domain with Lipschitz boundary $\partial \Omega$.
Let $\nu > 0$ be the kinematic viscosity of the fluid and $\bef\in L^2(\Omega)^3$ a given vector field representing a body force.
Setting $\bU\coloneq  H_0^1(\Omega)^3$ and $P\coloneq \Ldeuxz= \left\{q \in \Ldeux : \int_{\Omega}q=0\right\}$, we consider the Navier--Stokes problem:
Find $(\bu,p)\in\bU\times P$ such that
\begin{subequations}
  \label{eq:nstokes:weak}
  \begin{alignat}{2}
    \nu\int_\Omega\nabla \bu :  \nabla \bv
    + \int_\Omega ((\ROT \bu) \times \bu) \cdot \bv
    - \int_\Omega (\DIV \bv) p
    &= \int_\Omega \bef\cdot\bv
    &\qquad&  \forall \bv \in \bU,
    \label{eq:nstokes:weak:momentum}\\
    \int_\Omega (\DIV \bu) q
    &=0
    &\qquad& \forall q \in\Ldeux.
    \label{eq:nstokes:weak:mass}
  \end{alignat}
\end{subequations}
Above, $\DIV$ and $\ROT$ denote, respectively, the divergence and curl operators, while $\times$ is the cross product of two vectors.
The convective term in \eqref{eq:nstokes:weak:momentum} is expressed in rotational form, so $p$ is here the Bernoulli pressure, which is related to the kinematic pressure $p_{\text{kin}}$ by the equation $p=p_{\text{kin}} + \frac{1}{2}|\bu|^2$.
\smallskip

The domain $\Omega$ being simply connected, we have the following Hodge decomposition of the body force (see, e.g., \cite[Section 4.3]{Arnold:18}):
\begin{equation}\label{eq:hodge.f}
  \bef = \bg + \lambda\GRAD\psi,
\end{equation}
where $\bg$ is the curl of a function in $\bH({\bf curl};\Omega)$ the tangent trace of which vanishes on $\partial\Omega$,
$\psi\in H^1(\Omega)$ is such that $\|\GRAD\psi\|_{L^2(\Omega)^3}=1$, and $\lambda\in\Real^+$.
It is well-known that, at the continuous level, the velocity field is entirely determined by the first component in the decomposition \eqref{eq:hodge.f}.
This property, however, does not carry out automatically to the discrete level.
The development of numerical methods that possess this property, and which are sometimes referred to in the literature as \emph{pressure-robust}, has been an active field of research over the last few years;
see, e.g., \cite{Falk.Neilan:13,Linke:14,Linke.Merdon:16*1,John.Linke.ea:17,Ahmed.Linke.ea:18} concerning finite element methods on standard meshes.
\smallskip

Recently, the mathematical community have become interested in the development of arbitrary-order approximation methods that support more general meshes than standard finite elements and which can include, e.g., polyhedral elements and non-matching interfaces.
A representative but by far non exhaustive list of references concerning incompressible flow problems includes \cite{Di-Pietro.Ern:10,Di-Pietro.Ern:12,Di-Pietro.Krell:18,Gatica.Munar.ea:18,Botti.Di-Pietro.ea:19,Beirao-da-Veiga.Lovadina.ea:18,Beirao-da-Veiga.Dassi.ea:20,Zhang.Zhao.ea:21}; see also the recent works \cite{Botti.Castanon-Quiroz.ea:21,Castanon-Quiroz.Di-Pietro.ea:21} concerning non-Newtonian fluids.
Pressure-robust variations of the HHO method on matching simplicial meshes for the Stokes and Navier--Stokes problem have been proposed, respectively, in \cite{Di-Pietro.Ern.ea:16,Castanon-Quiroz.Di-Pietro:20}.

The development of pressure-robust methods on polyhedral meshes is, however, a challenging task.
Some of the first genuinely pressure-robust polyhedral methods for the Stokes equations have been proposed in \cite{Liu.Harper.ea:20,Wang.Mu.ea:21,Zhao.Park.ea:20}.
These methods handle the lowest order case using a velocity reconstruction in $\bH({\rm div};\Omega)$ introduced in \cite{Chen.Wang:17} and relying on Wachspress (generalized barycentric) coordinates.
This approach has two shortcomings:
first, the faces of each (convex) polyhedral element must be either triangles or parallelograms;
second, error estimates for the approximated velocity would require gradient bounds for the Wachspress coordinates on an arbitrary convex polyhedron, the derivation of which remains, to the best of our knowledge, an open problem.
Regarding arbitrary-order methods on general meshes, a pressure-robust Virtual Element method has been recently proposed in \cite{Frerichs.Merdon:20} for the Stokes equations.
The extension of this method to the Navier--Stokes equations remains, to the best of our knowledge, an open problem.
A pressure-robust discretization scheme for the full Navier--Stokes equations has been proposed in \cite{Kim.Zhao.ea:21} based on the staggered Discontinuous Garlekin method. 
This method solves for three unknowns (the pressure, the velocity, and its gradient), thus leading to larger algebraic systems.
Recently, a novel HHO method for which pressure-robustness has been numerically demonstrated has been proposed in \cite{Botti.Massa:21}.
This method uses a larger pressure space than the one considered in the present work, and the derivation of rigorous pressure-robust error estimates is still to be done.
An entirely different approach to pressure-robustness on polyhedral meshes has also been recently pursued in \cite{Beirao-da-Veiga.Dassi.ea:21}, hinging on the compatibility features of Discrete de Rham \cite{Di-Pietro.Droniou.ea:20,Di-Pietro.Droniou:21} and Virtual Element methods.
While this approach leads to a fully pressure-robust, arbitrary-order method, it is based on a curl-curl formulation of the viscous term, which does not lend itself naturally to the treatment of certain standard boundary conditions.
\medskip

In the present work, we propose a novel fully pressure-robust HHO method for the Navier--Stokes problem \eqref{eq:nstokes:weak} that works in space dimension two and three and supports general meshes composed of  polytopal elements.
The cornerstone of the method is a local divergence-preserving reconstruction of the velocity built inside each mesh element $T$ by solving a mixed problem inspired by {\cite{Kuznetsov.Repin:04,Kuznetsov.Repin:05,Lederer:2017}} on a subtriangulation of $T$; see also \cite{Vohralik.Wohlmuth:13}.
The assumptions made in Section   \ref{sec:setting:mesh} for each element  $T$ enable us to derive the required continuity and approximation bounds for this reconstruction.
Robustness with respect to large irrotational body forces is achieved by leveraging the divergence-preserving velocity reconstruction in the discretisation of both the convective term and the body force, so that similar properties as the ones discussed in \cite[Section 4.3 and Lemma 7]{Castanon-Quiroz.Di-Pietro:20} are obtained for these terms.
\smallskip

The rest of the paper is organised as follows.
In Section \ref{sec:setting} we introduce the discrete setting, including mesh assumptions, notation, and the novel divergence-preserving velocity reconstruction.
Section \ref{sec:discrete.problem} contains the discrete problem and the main results of the analysis, with particular focus on the definition and properties of the discrete convective trilinear form.
A complete panel of two-dimensional numerical tests on a variety of polygonal meshes is provided in Section \ref{sec:tests}, including a comparison with the standard HHO scheme of \cite{Botti.Di-Pietro.ea:19}.

%------------------------------------------------------------------------------%
%------------------------------------------------------------------------------%

\section{Discrete setting}\label{sec:setting}

The following exposition focuses on the three-dimensional case $d=3$, the two-dimensional case $d=2$ being a special instance of the latter as detailed in Remark \ref{rem:2d} below.

\subsection{Mesh}\label{sec:setting:mesh}

Following \cite[Definition 1.4]{Di-Pietro.Droniou:20}, we consider a polyhedral mesh defined as a couple $\Mh\coloneq(\Th,\Fh)$,
where $\Th$ is a finite collection of polyhedral elements which we additionally assume to be convex 
({ see Remark \ref{rem:convex.a} below on how to relax this assumption}), while $\Fh$ is a finite collection of planar faces $F$.
For any mesh element or face $X\in\Th\cup\Fh$, we denote by $|X|$ its Hausdorff measure and by $h_X$ its diameter, so that the meshsize satisfies $h = \max_{T\in\Th}h_T$.
Boundary faces lying on $\partial\Omega$ and internal faces contained in
$\Omega$ are collected in the sets $\Fhb$ and $\Fhi$, respectively.
For each mesh element $T\in\Th$, we denote by $\Fh[T]$ the set collecting the faces that lie on the boundary $\partial T$ of $T$ and, for all $F\in\Fh[T]$, we denote by $\normal_{TF}$ the (constant) unit vector normal to $F$ and pointing out of $T$.

It is assumed that $\Mh$ belongs to a regular mesh sequence $(\Mh)_h$ in the sense of \cite[Definition 1.9]{Di-Pietro.Droniou:20}.
This assumption entails the existence of a matching simplicial submesh $\Mhs\coloneq(\Ths,\Fhs)$ of $\Mh$ with the following properties:
$\Ths$ is a finite collection of simplicial elements;
for any simplex $\tau \in \Ths$ , there is a unique mesh element $T \in \Th$ such that $\tau \subset T$;
for any simplicial face $\sigma \in \Fhs$ and any mesh face $F \in \Fh$ , either $\sigma \cap F = \emptyset $ or $\sigma \subset F$.
For $T\in \Th$, we define $\Ths[T]$ as the set of all simplices of $\Ths$  contained in $T$ (see Figure \ref{fig:simplices.faces.T.a}) and $\Fhsi[T]$ as the set of faces of $\Fhs$ that lie in the interior of $T$.
For $F \in \Fh$, $\Fhs[F]$ denotes the set of simplicial faces $\sigma$ for which $\sigma \subset F$, and we let $\bn_\sigma\coloneq\bn_{TF}$, and $\bn_{\tau\sigma}\coloneq\bn_{\sigma}$ for the unique element $\tau\in \Ths[T]$, $T\in\Th$, which contains $\sigma$. 
{Additional notations for mesh elements and faces are introduced at the beginning of Section \ref{sec:discrete.setting:gradient.reconstruction} and illustrated in Figure \ref{fig:simplices.faces.T.b}}.
For future use, we notice that, {by \cite[Lemma 1.12]{Di-Pietro.Droniou:20}}, mesh regularity implies the existence of an integer $N\geq 0$ depending only on the mesh regularity parameter such that
\begin{align}
\max_h \max_{T\in\Th} \card(\Ths[T]) \leq N
\qquad \text{and}\qquad
\max_h \max_{T\in\Th} \card(\Fh[T]) \leq N.
\label{ineq:card.IT.F}
\end{align}
%%%%%%%%%%%%%%%%%%%%%%%%%%%%%%%%%%%%%%%%%%%%%%%%%%%%%%%%%%%%
%% Figure for notation
\begin{figure}[ht]
  \begin{minipage}[t]{.5\textwidth}
    \centering
    % include first image
    \def\svgwidth{.8\columnwidth}
    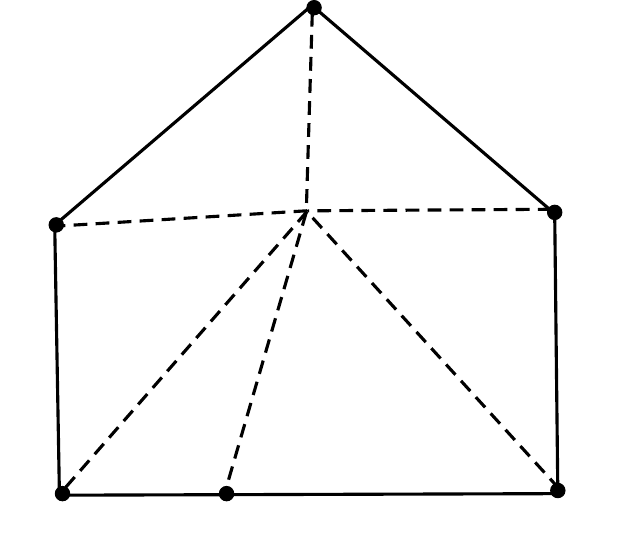
    \subcaption{The elements of $\Ths[T]$ and $\Fh[T]$.}
    \label{fig:simplices.faces.T.a}
  \end{minipage}
  \begin{minipage}[t]{.5\textwidth}
    \centering
    % include second image
    \def\svgwidth{0.88\columnwidth}
    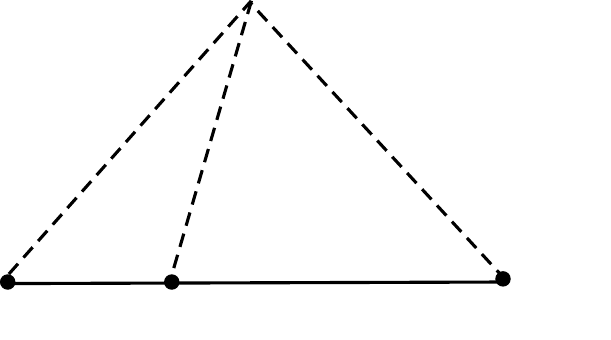
    \subcaption{A closer look to the bottom part: The faces $\sigma_1,\sigma_2,\sigma_3$ are interior faces, i.e., $\{\sigma_1,\sigma_2,\sigma_3\}\subset\Fhsi[T]$. 
      For the set $\{\sigma_4,\sigma_5\}$, we have $\sigma_4=F_1$ and $\sigma_5=F_2$.}
    \label{fig:simplices.faces.T.b}
  \end{minipage}
  \caption{An illustration of the sets $\Ths[T], \Fh[T]$ and $\Fhsi[T]$ for a given element $T\in \Th$ in $\Real^2$.}
  \label{fig:simplices.faces.T}
\end{figure}

%%%%%%%%%%%%%%%%%%%%%%%%%%%%%%%%%%%%%%%%%%%%%%%%%%%%%%%%%%%%
%% Figure for meshes
\begin{figure}[ht]
  \begin{minipage}[t]{.5\textwidth}
    \centering
    % include first image
    \def\svgwidth{.66\columnwidth}
    %% Creator: Inkscape inkscape 0.92.5, www.inkscape.org
%% PDF/EPS/PS + LaTeX output extension by Johan Engelen, 2010
%% Accompanies image file '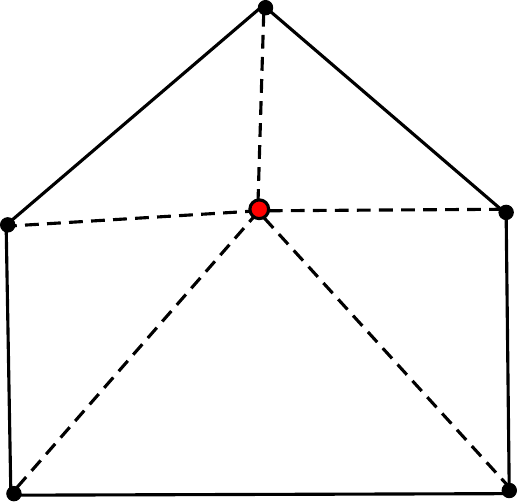' (pdf, eps, ps)
%%
%% To include the image in your LaTeX document, write
%%   \input{<filename>.pdf_tex}
%%  instead of
%%   \includegraphics{<filename>.pdf}
%% To scale the image, write
%%   \def\svgwidth{<desired width>}
%%   \input{<filename>.pdf_tex}
%%  instead of
%%   \includegraphics[width=<desired width>]{<filename>.pdf}
%%
%% Images with a different path to the parent latex file can
%% be accessed with the `import' package (which may need to be
%% installed) using
%%   \usepackage{import}
%% in the preamble, and then including the image with
%%   \import{<path to file>}{<filename>.pdf_tex}
%% Alternatively, one can specify
%%   \graphicspath{{<path to file>/}}
%% 
%% For more information, please see info/svg-inkscape on CTAN:
%%   http://tug.ctan.org/tex-archive/info/svg-inkscape
%%
\begingroup%
  \makeatletter%
  \providecommand\color[2][]{%
    \errmessage{(Inkscape) Color is used for the text in Inkscape, but the package 'color.sty' is not loaded}%
    \renewcommand\color[2][]{}%
  }%
  \providecommand\transparent[1]{%
    \errmessage{(Inkscape) Transparency is used (non-zero) for the text in Inkscape, but the package 'transparent.sty' is not loaded}%
    \renewcommand\transparent[1]{}%
  }%
  \providecommand\rotatebox[2]{#2}%
  \newcommand*\fsize{\dimexpr\f@size pt\relax}%
  \newcommand*\lineheight[1]{\fontsize{\fsize}{#1\fsize}\selectfont}%
  \ifx\svgwidth\undefined%
    \setlength{\unitlength}{248.14182687bp}%
    \ifx\svgscale\undefined%
      \relax%
    \else%
      \setlength{\unitlength}{\unitlength * \real{\svgscale}}%
    \fi%
  \else%
    \setlength{\unitlength}{\svgwidth}%
  \fi%
  \global\let\svgwidth\undefined%
  \global\let\svgscale\undefined%
  \makeatother%
  \begin{picture}(1,0.96977535)%
    \lineheight{1}%
    \setlength\tabcolsep{0pt}%
    \put(0,0){\includegraphics[width=\unitlength,page=1]{pentagon_cell_div_xT_pyramidal.pdf}}%
    \put(0.52266849,0.58311779){\makebox(0,0)[lt]{\lineheight{1.25}\smash{\begin{tabular}[t]{l}$\boldsymbol{x}_T$\end{tabular}}}}%
  \end{picture}%
\endgroup%

    \subcaption{Pyramidal submesh.}
    \label{fig:ex.mesh.a}
  \end{minipage}
  \begin{minipage}[t]{.5\textwidth}
    \centering
    % include first image
    \def\svgwidth{.66\columnwidth}
    %% Creator: Inkscape inkscape 0.92.5, www.inkscape.org
%% PDF/EPS/PS + LaTeX output extension by Johan Engelen, 2010
%% Accompanies image file '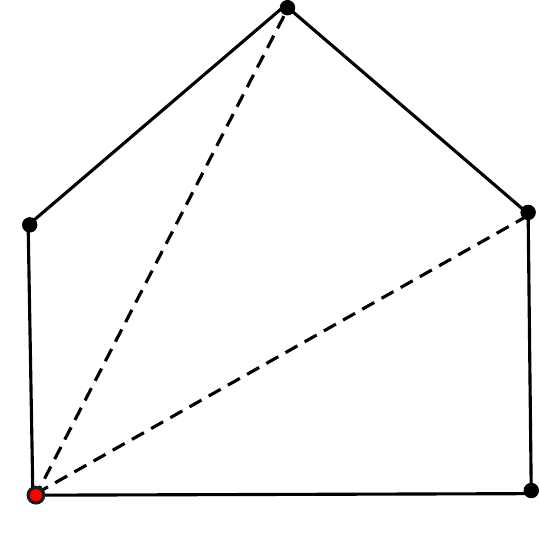' (pdf, eps, ps)
%%
%% To include the image in your LaTeX document, write
%%   \input{<filename>.pdf_tex}
%%  instead of
%%   \includegraphics{<filename>.pdf}
%% To scale the image, write
%%   \def\svgwidth{<desired width>}
%%   \input{<filename>.pdf_tex}
%%  instead of
%%   \includegraphics[width=<desired width>]{<filename>.pdf}
%%
%% Images with a different path to the parent latex file can
%% be accessed with the `import' package (which may need to be
%% installed) using
%%   \usepackage{import}
%% in the preamble, and then including the image with
%%   \import{<path to file>}{<filename>.pdf_tex}
%% Alternatively, one can specify
%%   \graphicspath{{<path to file>/}}
%% 
%% For more information, please see info/svg-inkscape on CTAN:
%%   http://tug.ctan.org/tex-archive/info/svg-inkscape
%%
\begingroup%
  \makeatletter%
  \providecommand\color[2][]{%
    \errmessage{(Inkscape) Color is used for the text in Inkscape, but the package 'color.sty' is not loaded}%
    \renewcommand\color[2][]{}%
  }%
  \providecommand\transparent[1]{%
    \errmessage{(Inkscape) Transparency is used (non-zero) for the text in Inkscape, but the package 'transparent.sty' is not loaded}%
    \renewcommand\transparent[1]{}%
  }%
  \providecommand\rotatebox[2]{#2}%
  \newcommand*\fsize{\dimexpr\f@size pt\relax}%
  \newcommand*\lineheight[1]{\fontsize{\fsize}{#1\fsize}\selectfont}%
  \ifx\svgwidth\undefined%
    \setlength{\unitlength}{258.69884312bp}%
    \ifx\svgscale\undefined%
      \relax%
    \else%
      \setlength{\unitlength}{\unitlength * \real{\svgscale}}%
    \fi%
  \else%
    \setlength{\unitlength}{\svgwidth}%
  \fi%
  \global\let\svgwidth\undefined%
  \global\let\svgscale\undefined%
  \makeatother%
  \begin{picture}(1,0.99949605)%
    \lineheight{1}%
    \setlength\tabcolsep{0pt}%
    \put(0,0){\includegraphics[width=\unitlength,page=1]{pentagon_cell_div_xT_Non_pyramidal.pdf}}%
    \put(-0.0028878,0.00820475){\makebox(0,0)[lt]{\lineheight{1.25}\smash{\begin{tabular}[t]{l}$\boldsymbol{x}_T$\end{tabular}}}}%
  \end{picture}%
\endgroup%

    \subcaption{Non-pyramidal submesh.}
    \label{fig:ex.mesh.b}
  \end{minipage}
  \caption{Two examples of submeshes $\Ths[T]$ in $\Real^2$ that satisfy the assumptions of the Section \ref{sec:setting:mesh} . The red dot represents the vertex $\bx_T$.}
  \label{fig:ex.mesh.T}
\end{figure}
We additionally make the assumption that, for all element $T\in\Th$, its submesh $\Ths[T]$ is constructed in such way that all simplices in $\Ths[T]$ have at least one common vertex
{(see Remarks \ref{rem:cvertex.prop.low} and \ref{rem:cvertex.prop}  for the technical details of this assumption)}.
  This vertex will be denoted ${\bx}_T$. In particular, when $\bx_T$ lies in the interior of $T$, we call  $\Ths[T]$ a  pyramidal submesh. The Figure \ref{fig:ex.mesh.T} shows two examples of submeshes that satisfy the current assumption.

In order to prevent the proliferation of generic constants we write, whenever possible, $a\lesssim b$ in place of $a\le Cb$ with $C>0$ independent of $\nu$, $\lambda$, $h$ and, for local inequalities, also on the mesh element or face.
The dependencies of the hidden constant will be further specified when relevant. Moreover, we write
$a  \simeq b$, when both $a \lesssim b$  and $b \lesssim a$ hold.

\subsection{Local and broken spaces and projectors}

Let $X$ denote a mesh element or face and, for an integer $l\geq 0$, denote by
 $\Poly{l}(X)$  the space spanned by the restrictions to $X$
of polynomials in the space variables of total degree $\leq l$.
The $L^2$-orthogonal projector $\pi_X^l:L^1(X)\rightarrow \Poly{l}(X)$ is such that, for all $\zeta\in L^1(X)$,
\begin{equation}
  \int_X (\zeta-\pi_X^l \zeta)w=0 \qquad \forall w \in \Poly{l}(X).\label{eq:l2proj:def}
\end{equation}
Vector and matrix versions of the $L^2$-orthogonal projector are obtained by applying $\pi_X^l$ component-wise, and are both denoted with the bold symbol $\bpi_X^l$ in what follows.
Optimal approximation properties for the $L^2$-orthogonal projector are proved in \cite[Appendix A.2]{Di-Pietro.Droniou:17}; see also \cite[Chapter 1]{Di-Pietro.Droniou:20}, where these estimates are extended to non-star shaped elements.
Specifically, let $s\in\{0,\dots,l+1\}$ and $r \in [1,+\infty]$.
Then, it holds, with hidden constant only depending on $l$, $s$, $r$, and the mesh regularity parameter:
For all $T\in {\calT_h}$, all $\zeta\in W^{s,r}(T)$, and all $m\in\{0,\dots,s\}$,
\begin{subequations}
  \begin{equation}
    |\zeta - \pi_T^l \zeta |_{W^{m,r}(T)} \lesssim h_T^{s-m}|\zeta|_{W^{s,r}(T)},\label{eq:l2proj:error:cell}
  \end{equation}
  and, if $s \geq 1$ and $m\leq s-1$,
  \begin{equation}
    h_T^{\frac{1}{r}}|\zeta - \pi_T^l \zeta |_{W^{m,r}(\calF_T)} \lesssim h_T^{s-m}|\zeta|_{W^{s,r}(T)},\label{eq:l2proj:error:faces}
  \end{equation}
\end{subequations}
where $W^{m,r}(\calF_T)$ is the space spanned by functions in $L^r(\partial T)$ that are in $W^{m,r}(F)$ for all $F\in\calF_T$,
endowed with the corresponding broken norm. 

At the global level, the space of broken polynomial functions on $\calT_h$ of total degree $\leq l$ is denoted by $\Poly{l}(\calT_h)$, and $\pi_h^l$ is the corresponding $L^2$-orthogonal projector such that, for all $\zeta \in L^1(\Omega)$, $(\pi_h^l \zeta)_{|T}\coloneq\pi_T^l \zeta_{|T}$ for all $T\in\calT_h$.
Regularity requirements in error estimates will be expressed  in terms of the broken Sobolev spaces $W^{s,r}(\calT_h)$ spanned by functions in $L^r(\Omega)$ the restriction of which to every $T\in\Th$ is in $W^{s,r}(T)$.
We additionally set, as usual, $H^s(\calT_h)\coloneq W^{s,2}(\calT_h)$.

\subsection{Discrete spaces and norms}
\label{sec:discspaces}
Let a polynomial degree $k\geq 0$ be fixed.
We define the HHO space as usual, setting
\begin{align*}
  \uline{\bU}_h^k\coloneq \left\{
  \uline{\bv}_h=((\bv_T)_{T\in \calT_h},(\bv_F)_{F\in \calF_h}) : 
  \mbox{$\bv_T \in \Poly{k}(T)^3$ for all $T \in \calT_h$ and
    $\bv_F \in \Poly{k}(F)^3$ for all $F \in \calF_h$}
  \right\}.  
\end{align*}
The restrictions of $\uline{\bU}_h^k$ and $\uline{\bv}_h \in \uline{\bU}_h^k$ to a generic mesh element $T\in \calT_h$ are respectively denoted by $\uline{\bU}_T^k$ and $\uline{\bv}_T=(\bv_T,(\bv_F)_{F\in \calF_T})$.
The vector of polynomials corresponding to a smooth function over $\Omega$ is obtained via the global interpolation operator $\uline{\bI}_h^k: H^1(\Omega)^3 \rightarrow \uline{\bU}_h^k$ such that, for all $\bv \in H^1(\Omega)^3$,
\begin{equation}\label{eq:Ih}
  \uline{\bI}_h^k\bv \coloneq  ((\bpi_T^k\bv_{|T})_{T\in\calT_h},(\bpi_F^k\bv_{|F})_{F\in \calF_h}).
\end{equation}
Its restriction to a generic mesh element $T \in \calT_h$, collecting the components on $T$ and its faces, is denoted by $\uline{\bI}_T^k$.
We furnish $\uline{\bU}_{h}^k$ with the discrete $H^1$-like seminorm such that, for all
$\uline{\bv}_h \in \uline{\bU}_h^k$,
\begin{equation*}%% \label{eq:norm.1h}
  \|\underline{\bv}_h\|_{1,h}\coloneq\left(
  \sum_{T \in \calT_h} \|\underline{\bv}_T\|_{1,T}^2
  \right)^{\nicefrac12},
\end{equation*}
where, for all $T\in\calT_h$,
\begin{equation}\label{eq:norm.1T}
  \|\underline{\bv}_T\|_{1,T}\coloneq\left( 
  \|\nabla {\bv}_T\|_{L^2(T)^{3\times 3}}^2 + |\uline{\bv}_T|_{1,\partial T}^2
  \right)^{\nicefrac12}
  \mbox{ with }
  |\uline{\bv}_T|_{1,\partial T}\coloneq\left(
  \sum_{F\in \calF_T}\! h_F^{-1}\| \bv_F-\bv_T\|^2_{L^2(F)^3}
  \right)^{\nicefrac12}.
\end{equation}

The discrete spaces for the velocity and the pressure, respectively accounting for the wall boundary condition and the zero-average condition, are 
\begin{equation*}%% \label{eq:ns:vUhD:Ph}
  \uline{\bU}_{h,0}^k
  \coloneq
  \left \{
  \uline{\bv}_h = ((\bv_T)_{T\in\calT_h},(\bv_F)_{F\in\calF_h})\in  \uline{\bU}_h^k : \bv_F = \boldsymbol{0} \quad \forall F\in\calF_h^{\rm b}
  \right\},\qquad
  P_h^k \coloneq \Poly{k}({\cal {T}}_h)\cap P.
\end{equation*}
For all  $\uline{\bv}_h \in \uline{\bU}_h^k$, we denote by ${\bv}_h \in \Poly{k}(\calT_h)^3$ the vector-valued broken polynomial function obtained patching element-based unkowns, that is $(\bv_h)_{|T}\coloneq \bv_T$ for all $T \in \calT_h$.
The following discrete Sobolev embeddings in $\uline{\bU}_{h,0}^k$ have been proved in \cite[Proposition 5.4]{Di-Pietro.Droniou:17}:
For all $r\in[1,6]$ it holds, for all  $\uline{\bv}_h \in \uline{\bU}_{h,0}^k$,
\begin{align}
  \|{\bv}_h\|_{L^r(\Omega)^3}
  \lesssim
  \|\uline{\bv}_h\|_{1,h}.
  \label{eq:disc:sobembd}
\end{align}
where the hidden constant is independent of both $h$ and $\uline{\bv}_h$, but possibly depends on $\Omega$, $k$, $r$, and the mesh regularity parameter.
It follows from \eqref{eq:disc:sobembd} that the map $\| {\cdot} \|_{1,h}$ defines a norm on $\uline{\bU}_{h,0}^k$.
Classically, the corresponding dual norm of a linear form $\mathcal{L}_h:\uline{\bU}_{h,0}^k\to\Real$ is given by
\begin{equation}\label{eq:dual.norm}
  \|\mathcal{L}_h\|_{1,h,*}
  \coloneq\sup_{\uline{\bv}_h\in\uline{\bU}_{h,0}^k,\|\uline{\bv}_h\|_{1,h}=1}\left|
  \mathcal{L}_h(\uline{\bv}_h)
  \right|.
\end{equation}

%------------------------------------------------------------------------------%

\subsection{Divergence-preserving local velocity reconstruction}\label{sec:discrete.setting:velocity.reconstruction}

Following  \cite{Di-Pietro.Ern:15}, for any element $T \in \calT_h$ we define the discrete divergence operator $D_T^k:\uline{\bU}_T^k \rightarrow \Poly{k}(T)$ such that, for all $\uline{\bv}_T \in \uline{\bU}_T^k$ and all $q \in \Poly{k}(T)$,
\begin{equation}\label{eq:hho:div:op}
  \int_T D_T^k\uline{\bv}_T q
  = - \int_T \bv_T\cdot \GRAD q + \sum_{F\in \calF_T} \int_F (\bv_F \cdot \bn_{TF}) q.
\end{equation}
Crucially, the operator $D_T^k$ satisfies the following commutation property (see \cite[Eq. (8.21)]{Di-Pietro.Droniou:20}):
\begin{equation}
  D_T^k\uline{\bI}_T^k \bv  = \pi_T^k(\DIV \bv) \qquad \forall \bv \in H^1(T)^3.
  \label{eq:DT.commuting}
\end{equation}

To achieve pressure robustness in the sense made precise by Remark \ref{rem:pressure-robustness} below, we reconstruct divergence-preserving velocity test functions, which are used for the discretization of the body force and the nonlinear term.
Let an element $T \in \calT_h$ be fixed and, for $\tau \in \Ths[T]$, denote by $\RTN{k}(\tau)\coloneq\Poly{k}(\tau)^3+\bx\Poly{k}(\tau)$ the local Raviart--Thomas--N\'ed\'elec space of degree $k$ \cite{Raviart.Thomas:77,Nedelec:80}.
We recall that a function in $\RTN{k}(\tau)$ is uniquely determined by its polynomial moments of degree up to $(k-1)$ inside $\tau$ and the polynomial moments of degree $k$ of its normal component on each face $\sigma\in\Fhs[\tau]$ (with $\Fhs[\tau]$ denoting the subset of $\Fhs$ collecting the simplicial faces of $\tau$).
We additionally note the following local norm equivalence uniform in $h$:
\begin{align}
  \norm{\Ldeux[\tau]^3}{\bws}^2
  \simeq
  \norm{\Ldeux[\tau]^3}{\bpi_\tau^{k-1}\bws}^2
  +
  \sum_{\sigma\in \Fhs[\tau]}h_\sigma \norm{\Ldeux[\sigma]}{\bws \cdot\bn_{\tau\sigma}}^2
    \qquad \forall   \bws \in \RTN{k}(\tau).
  \label{eq:RTN.L2norm}
\end{align}
We introduce the Raviart--Thomas--N\'ed\'elec space of degree $k$ on the matching simplicial submesh $\Ths[T]$ of $T$ defined as follows:
\begin{equation*}%% \label{eq:def:rtn:gbl}
  \RTN{k}(\Ths[T])\coloneq\left\{
  \bws \in \Hdiv[T] : \text{%
    $\bws_{|\tau}\in \RTN{k}(\tau)$ for all $\tau \in \Ths[T]$
  }
  \right\},
\end{equation*}
where $\Hdiv[T]\coloneq\{\bws \in L^2(T)^3:  \DIV \bws \in \Ldeux[T]\}$.
We also introduce the subspace of $\RTN{k}(\Ths[T])$ spanned by functions with zero normal trace on the boundary of $T$:
\begin{equation*}%% \label{eq:def:rtn0:gbl}
  \RTN{k}_0(\Ths[T])
  \coloneq\left\{
  \bws \in   \RTN{k}(\Ths[T]) :
  \bws \cdot \text{$\bn_{\sigma} = 0$ for all $\sigma  \in \Fhs[F]$ and all $F \in \Fh[T]$}
  \right\}.
\end{equation*}

Recall from Section \ref{sec:setting:mesh} that, for a given element $T\in\Th$,  we denote by $\bx_T$ the common vertex of all simplices in $\Ths[T]$.
With this in mind, we additionally introduce the following space generated by the Koszul operator (\cite[Section 7.2]{Arnold:18}):
\begin{align*}
\Goly{{\rm c},k}(T)\coloneq (\bx-\bx_T) \times \Poly{k-1}(T)^3 \qquad \text{for } k\geq1,
\end{align*}
and define $\Goly{{\rm c},-1}(T)\coloneq\Goly{{\rm c},0}(T)\coloneq\{0\}$.
Observe that we have the following
decomposition for $\Poly{k}(T)^3$ (see \cite[Corollary 7.4]{Arnold:18}):
\begin{equation}\label{eq:kz.decomp}
\Poly{k}(T)^3=\GRAD \Poly{k+1}(T) \oplus \Goly{{\rm c},k}(T),
\end{equation}
where the direct sum above is not orthogonal in general.
Additionally, we define the $L^2$-orthogonal projector on the space $\Goly{{\rm c},k}(T)$  as $\bpi^{ {\rm c},k}_{\Goly{},T}$.
Then, the \emph{divergence-preserving velocity reconstruction} ${\bR}_{T}^k: \uline{\bU}_T^k \rightarrow \RTN{k}(\Ths[T])$ is defined, for all $\uline{\bv}_T \in \uline{\bU}_T^k$, as the first component of the solution of the following mixed problem: 
Find  $({\bR}_{T}^k\uline{\bv}_T,\ps,\ts)\in \RTN{k}(\Ths[T])\times \Poly{k}(\Ths[T])\times\Goly{{\rm c},k-1}(T)$ such that
\begin{subequations}
  \label{eq:darcyT:weak}
  \begin{alignat}{2}
    ({\bR}_{T}^k\uline{\bv}_T)_{|\sigma} \cdot \bn_{\sigma}&= (\bv_F\cdot \bn_{TF})_{|\sigma}
    &\qquad&
    \forall \sigma  \in \Fhs[F],\,  \forall F \in \Fh[T],
    \label{eq:darcyT:weak:bd}\\
    \int_{T} (\DIV {\bR}_{T}^k\uline{\bv}_T) \qs &= \int_{T} (D_T^k\uline{\bv}_T) \qs
    &\qquad&
    \forall \qs \in \Poly{k}(\Ths[T]),
    \label{eq:darcyT:weak:a}\\
    \int_{T}  {\bR}_{T}^k\uline{\bv}_T \cdot \xis &= 
    \int_{T} \bv_T\cdot \xis
   &\qquad&
    \forall \xis \in \Goly{{\rm c},k-1}(T),
    \label{eq:darcyT:weak:ab}\\
    \int_{T}  {\bR}_{T}^k\uline{\bv}_T \cdot \bws  +  \int_{T} (\DIV \bws )\ps 
    + \int_{T}  {\bws} \cdot \ts
    &= \int_{T} \bv_T \cdot \bws
    &\qquad&
    \forall \bws \in\RTN{k}_0(\Ths[T]).
    \label{eq:darcyT:weak:b}
  \end{alignat}
\end{subequations}

\begin{remark}[Allowing more than pyramidal meshes]
A similar divergence-preserving operator has been proposed in 
\cite[Section 4.2]{Lederer:2017} in the context of finite elements pairs with continuous pressures. 
However, adapting it to the current  HHO framework will restrict the submesh $\Ths[T]$  to be only a pyramidal submesh (or  a vertex patch in the terminology of \cite{Lederer:2017}). Specifically,
using the methodology introduced in \cite[Proof of Theorem 12]{Lederer:2017},
 to prove the equation \eqref{ineq:rtn:consis} below, 
it will be necessary to construct the  Lagrange hat function of $\bx_T$ (a polynomial function $q\in\Poly{1}(\Ths[T])$ 
such that $q(\bx_T)=1$ and vanishes at the other vertices of 
$\Ths[T]$)
and use its properties with the crucial restriction
that this hat function must vanish at the boundary of $T$.
This is only possible  when $\Ths[T]$ is pyramidal.
In the current manuscript, we avoid this restriction using Lemma \ref{lemma:appx} below.
\end{remark}

\begin{lemma}[Properties of $\bR_{T}^k$]
  \label{lemm:rtn}
  It holds:
  \begin{enumerate}[(i)]
  \item \emph{Well-posedness.}
    For a given $\uline{\bv}_T \in \uline{\bU}_T^k $, there exists a unique solution to problem  \eqref{eq:darcyT:weak},
    and it holds that  
    \begin{align}
      \norm{\Ldeuxd[T]}{\bv_T - \bR_{T}^k\uline{\bv}_T  } \lesssim h_T   \seminorm{1,\partial T}{\uline{\bv}_T}.
      \label{ineq:rtn:bound}
    \end{align}
  \item \emph{Approximation.} For all $\bv\in \bH^{k+1}(T)^3$, it holds
    \begin{align}
      \norm{\Ldeuxd[T]}{\bv  - \bR_{T}^k(\uline{\bI}_T^k\bv)  } \lesssim h_T^{k+1}   \seminorm{H^{k+1}(T)^3}{\bv}.
      \label{ineq:rtn:approx}
    \end{align}
 \item \emph{Consistency.} {For a given $\uline{\bv}_T \in \uline{\bU}_T^k$, it holds,
for $k\geq1$,
    \begin{align}
      \bpi^{k-1}_T(\bR_{T}^k\uline{\bv}_T)= \bpi^{k-1}_T(\bv_T).
      \label{ineq:rtn:consis}
    \end{align}
}
  \end{enumerate}
\end{lemma}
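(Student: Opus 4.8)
The plan is to read \eqref{eq:darcyT:weak} as a finite-dimensional saddle-point problem on the local Raviart--Thomas--N\'ed\'elec space, in which $\ps$ and $\ts$ are the Lagrange multipliers associated with the divergence constraint \eqref{eq:darcyT:weak:a} and the Koszul-moment constraint \eqref{eq:darcyT:weak:ab}. I would prove the three items in order, obtaining the approximation bound (ii) as a corollary of the stability bound (i), and deducing the consistency identity (iii) directly from the defining equations once well-posedness is known.

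For (i) I would first absorb the prescribed normal trace \eqref{eq:darcyT:weak:bd} by reducing to $\RTN{k}_0(\Ths[T])$, and then apply Brezzi's theory. The principal bilinear form in \eqref{eq:darcyT:weak:b} is the $L^2$-product on $\RTN{k}_0(\Ths[T])$, which is coercive there, so coercivity on the constraint kernel is automatic. The decisive point is the inf-sup condition for the combined constraint $\bws\mapsto(\DIV\bws,\bpi^{{\rm c},k-1}_{\Goly{},T}\bws)$, i.e. its surjectivity from $\RTN{k}_0(\Ths[T])$ onto $(\Poly{k}(\Ths[T])\cap\Ldeuxz[T])\times\Goly{{\rm c},k-1}(T)$. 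The divergence component is the classical surjectivity of $\DIV$ on the zero-normal-trace Raviart--Thomas space of a connected simplicial submesh (equivalently, exactness of the finite-element de Rham sequence); the Koszul component, namely that the $L^2$-projection onto $\Goly{{\rm c},k-1}(T)$ of the divergence-free fields of $\RTN{k}_0(\Ths[T])$ exhausts $\Goly{{\rm c},k-1}(T)$, is exactly where the assumption that all simplices of $\Ths[T]$ share the vertex $\bx_T$ is used, through the auxiliary result stated as Lemma~\ref{lemma:appx}. \textbf{I expect this Koszul inf-sup to be the main obstacle of the whole lemma.} Granting it, uniqueness follows by taking vanishing data: testing \eqref{eq:darcyT:weak:b} with $\bws=\bR_T^k\uline{\bv}_T$ and using $\DIV\bR_T^k\uline{\bv}_T=0$ (from \eqref{eq:darcyT:weak:a} with $\qs=\DIV\bR_T^k\uline{\bv}_T$) and $\int_T\bR_T^k\uline{\bv}_T\cdot\ts=0$ (from \eqref{eq:darcyT:weak:ab}) forces $\bR_T^k\uline{\bv}_T=0$, after which the inf-sup yields $\ts=0$ and determines $\ps$ up to an additive constant; the single compatibility condition for existence (the $\qs\equiv1$ instance of \eqref{eq:darcyT:weak:a}) is met automatically because of \eqref{eq:darcyT:weak:bd}.

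The estimate \eqref{ineq:rtn:bound} then comes from the Brezzi stability bound written for the shifted field $\bR_T^k\uline{\bv}_T-\bv_T$, whose data are the boundary mismatch $(\bv_F-\bv_T)\cdot\bn_{TF}$ and the divergence defect $D_T^k\uline{\bv}_T-\DIV\bv_T$. Both are controlled by $\seminorm{1,\partial T}{\uline{\bv}_T}$; in particular the norm equivalence \eqref{eq:RTN.L2norm}, applied simplex by simplex, turns the boundary-flux data $\sum_\sigma h_\sigma\norm{\Ldeux[\sigma]}{(\bv_F-\bv_T)\cdot\bn_{TF}}^2\lesssim h_T^2\,\seminorm{1,\partial T}{\uline{\bv}_T}^2$ into the announced factor $h_T$. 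Keeping track of these powers of $h_T$ is the only delicate bookkeeping here.

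For (ii) I would insert $\bpi_T^k\bv$ and split $\norm{\Ldeuxd[T]}{\bv-\bR_T^k(\uline{\bI}_T^k\bv)}\le\norm{\Ldeuxd[T]}{\bv-\bpi_T^k\bv}+\norm{\Ldeuxd[T]}{\bpi_T^k\bv-\bR_T^k(\uline{\bI}_T^k\bv)}$; the first term is $\lesssim h_T^{k+1}\seminorm{H^{k+1}(T)^3}{\bv}$ by \eqref{eq:l2proj:error:cell}, while the second, since $\bpi_T^k\bv$ is the element component of $\uline{\bI}_T^k\bv$, is bounded through \eqref{ineq:rtn:bound} by $h_T\,\seminorm{1,\partial T}{\uline{\bI}_T^k\bv}$, and $(\uline{\bI}_T^k\bv)_F-(\uline{\bI}_T^k\bv)_T=\bpi_F^k\bv-\bpi_T^k\bv$ together with the trace estimate \eqref{eq:l2proj:error:faces} gives $\seminorm{1,\partial T}{\uline{\bI}_T^k\bv}\lesssim h_T^k\seminorm{H^{k+1}(T)^3}{\bv}$, hence again $h_T^{k+1}$. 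Finally, for (iii) with $k\ge1$ I would show $\int_T(\bR_T^k\uline{\bv}_T-\bv_T)\cdot\bz=0$ for all $\bz\in\Poly{k-1}(T)^3$, which is \eqref{ineq:rtn:consis}. Writing $\bz=\GRAD r+\xis$ with $r\in\Poly{k}(T)$ and $\xis\in\Goly{{\rm c},k-1}(T)$ via \eqref{eq:kz.decomp}, the $\xis$-part vanishes at once by \eqref{eq:darcyT:weak:ab}; for the $\GRAD r$-part I integrate by parts, rewrite $\int_T(\DIV\bR_T^k\uline{\bv}_T)r=\int_T(D_T^k\uline{\bv}_T)r$ through \eqref{eq:darcyT:weak:a} (legitimate since $r\in\Poly{k}(T)\subset\Poly{k}(\Ths[T])$), identify the boundary term $\sum_F\int_F(\bR_T^k\uline{\bv}_T\cdot\bn_{TF})r=\sum_F\int_F(\bv_F\cdot\bn_{TF})r$ via \eqref{eq:darcyT:weak:bd}, and substitute the definition \eqref{eq:hho:div:op} of $D_T^k$, whereupon everything collapses to $\int_T\bR_T^k\uline{\bv}_T\cdot\GRAD r=\int_T\bv_T\cdot\GRAD r$. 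This closing step is short and relies only on the well-posedness secured in (i), so the genuine difficulty of the lemma is concentrated in the Koszul inf-sup.
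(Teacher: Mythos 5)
Your architecture for item (i) --- absorb the boundary data, read \eqref{eq:darcyT:weak} as a saddle-point problem with the $L^2$-product as principal form, treat Lemma \ref{lemma:appx} as the inf-sup for the Koszul constraint, and obtain \eqref{ineq:rtn:bound} by testing with the shifted field --- is the right skeleton, and your data bounds (boundary flux via \eqref{eq:RTN.L2norm}, divergence defect $D_T^k\uline{\bv}_T-\DIV\bv_T$ via discrete trace inequalities) are exactly those the paper uses. The genuine gap is your dismissal of the divergence component as ``classical surjectivity'' with only ``bookkeeping'' left to do. Surjectivity of $\DIV:\RTN{k}_0(\Ths[T])\to\Poly{k}_0(\Ths[T])$ gives existence and uniqueness of the finite-dimensional problem, but the estimate \eqref{ineq:rtn:bound} with a constant independent of $h$ \emph{and of the element} requires a quantitative lifting: for every $\qs\in\Poly{k}_0(\Ths[T])$ a field $\bws\in\RTN{k}_0(\Ths[T])$ with $\DIV\bws=\qs$ and $\norm{\Ldeuxd[T]}{\bws}\lesssim h_T\norm{\Ldeux[T]}{\qs}$, uniformly over all polytopes of the regular mesh family. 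No scaling to a fixed reference element is available here, since $T$ ranges over infinitely many shapes, so Brezzi stability cannot simply be ``applied'' with uniform constants. The paper manufactures this lifting by solving the Neumann problem \eqref{eq:neumann:weak} on $T$, getting the $h_T$-scaled bound \eqref{ineq:bound:thetH1} from the Poincar\'e--Wirtinger inequality and --- crucially --- the $H^2$-regularity estimate \eqref{ineq:bound:thetH2}, valid because $T$ is \emph{convex}, to control the Raviart--Thomas interpolation error of $\bz_0=-\GRAD\theta$. This is precisely where the convexity assumption of Section \ref{sec:setting:mesh} is consumed (see Remark \ref{rem:convex.a} for the Bogovskii-type alternative); your proposal contains no mechanism for it, and this step, at least as much as the Koszul inf-sup you flag, is the analytic heart of item (i).

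Items (ii) and (iii) are fine. Your proof of (ii) coincides with the paper's: insert $\bpi_T^k\bv$, apply \eqref{ineq:rtn:bound} with $\uline{\bv}_T=\uline{\bI}_T^k\bv$, and bound $\seminorm{1,\partial T}{\uline{\bI}_T^k\bv}$ by $h_T^{k}\seminorm{H^{k+1}(T)^3}{\bv}$ through \eqref{eq:l2proj:error:faces} (note that the paper itself derives \eqref{ineq:rtn:bound} not by a direct stability bound on the shifted field but via the polynomial reproduction property $\bR_T^k(\uline{\bI}_T^k\bv_T)=\bv_T$ combined with the raw boundedness \eqref{eq:bound.solddarcy} and linearity; your shifted-field route is an equivalent repackaging of the same kernel-testing argument). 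For (iii), your argument --- annihilate $\bR_T^k\uline{\bv}_T-\bv_T$ against an arbitrary $\bz\in\Poly{k-1}(T)^3$ split by \eqref{eq:kz.decomp}, killing the Koszul part by \eqref{eq:darcyT:weak:ab} and the gradient part by integration by parts together with \eqref{eq:darcyT:weak:a}, \eqref{eq:darcyT:weak:bd} and \eqref{eq:hho:div:op} --- is correct and in fact slightly more elementary than the paper's, which instead projects onto the two summands and invokes the recovery operator of \cite{Di-Pietro.Droniou:21} with its norm equivalence; the non-orthogonality of \eqref{eq:kz.decomp} is immaterial for your version, since testing a linear functional against both summands of a direct sum covers the whole space. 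Two small points worth making explicit: the elementwise integration by parts across $\Ths[T]$ is legitimate only because $\bR_T^k\uline{\bv}_T\in\Hdiv[T]$, and the multiplier $\ps$ in \eqref{eq:darcyT:weak} is determined only up to an additive constant (as you note), which is why the paper's auxiliary problem \eqref{eq:darcyT0:weak} is posed over $\Poly{k}_0(\Ths[T])$.
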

The proof makes use of the following Lemma, whose proof is given in Appendix \ref{sec:appx}.

\begin{lemma}[Raviart--Thomas lifting of the projection in $\Goly{{\rm c},k-1}(T)$]\label{lemma:appx}
  Let $T\in \Th$ and a function $\bv\in L^2(T)^3$ be given.
  Then, for $k\geq2$, there exists $\widetilde{\bR}_T^k(\bv)\in\RTN{k}_0(\Ths[T])$ such that
 \begin{subequations}
   \label{eq:rtn:golyc}
   \begin{align}
   \bpi^{{\rm c},k-1}_{\Goly{},T}\widetilde{\bR}_T^k(\bv)&=\bpi^{{\rm c},k-1}_{\Goly{},T}\bv,
    \label{eq:rtn:golyc:a}\\
   \DIV \widetilde{\bR}_T^k(\bv)&=0,
    \label{eq:rtn:golyc:b}\\
%%     \forall \sigma\in \Fhs[T]^{\rm i},
%%     \qquad
%%     \int _\sigma (\widetilde{\bR}_T^k(\bv) \cdot\bn_{\sigma})\qs 
%%     &=0
%%     \qquad\forall  \qs \in \Poly{k}(\sigma),
    \widetilde{\bR}_T^k(\bv) \cdot\bn_{\sigma} &= 0
    \qquad\forall \sigma\in \Fhs[T]^{\rm i},
   \label{eq:rtn:golyc:c}\\
   \norm{\Ldeuxd[T]}{\widetilde{\bR}_T^k(\bv)}&\lesssim\norm{\Ldeuxd[T]}{\bv}
    \label{eq:rtn:golyc:d}
    \end{align}
 \end{subequations}
\end{lemma}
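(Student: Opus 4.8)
The plan is to recast the lemma as a moment-matching problem on a decoupled space of local bubbles and then to establish the surjectivity of the associated moment map with a uniform bounded right inverse. First I would note that for $k\ge 2$ the target space $\Goly{{\rm c},k-1}(T)=(\bx-\bx_T)\times\Poly{k-2}(T)^3$ is nontrivial, and set $\boldsymbol{\eta}\coloneq\bpi^{{\rm c},k-1}_{\Goly{},T}\bv$; by $L^2$-stability of the orthogonal projection, $\norm{\Ldeuxd[T]}{\boldsymbol{\eta}}\le\norm{\Ldeuxd[T]}{\bv}$, so \eqref{eq:rtn:golyc:d} will follow from a bound in terms of $\norm{\Ldeuxd[T]}{\boldsymbol{\eta}}$. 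The key structural remark is that requiring $\widetilde{\bR}_T^k(\bv)\in\RTN{k}_0(\Ths[T])$ together with \eqref{eq:rtn:golyc:c} amounts to demanding a vanishing normal trace on \emph{every} simplicial face, which (both one-sided traces across any interior face then vanishing) decouples the simplices: the admissible fields are exactly the elements of $\bB\coloneq\bigoplus_{\tau\in\Ths[T]}\bB_\tau$ with
\begin{equation*}
\bB_\tau\coloneq\left\{\bw\in\RTN{k}(\tau) : \bw\cdot\bn_{\tau\sigma}=0 \ \forall\sigma\in\Fhs[\tau],\ \DIV\bw=0\right\}.
\end{equation*}
Conditions \eqref{eq:rtn:golyc:b}, \eqref{eq:rtn:golyc:c} and membership in $\RTN{k}_0(\Ths[T])$ thus hold for any $\bw\in\bB$ by construction.

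Next I would introduce the linear moment map $\Phi:\bB\to\Goly{{\rm c},k-1}(T)$, $\Phi\bw\coloneq\bpi^{{\rm c},k-1}_{\Goly{},T}\bw$, and define $\widetilde{\bR}_T^k(\bv)$ as the minimum-$L^2$-norm solution of $\Phi\bw=\boldsymbol{\eta}$. Since the constraint set is an affine subspace and the objective is the $L^2(T)$ norm, this solution exists as soon as $\Phi$ is surjective, depends linearly on $\boldsymbol{\eta}$ (hence on $\bv$), and satisfies \eqref{eq:rtn:golyc:a} by definition. Its stability \eqref{eq:rtn:golyc:d} would then be a consequence of $\Phi$ admitting a right inverse bounded uniformly in $h$, since the minimum-norm solution is then controlled by $\norm{\Ldeuxd[T]}{\boldsymbol{\eta}}\le\norm{\Ldeuxd[T]}{\bv}$.

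Everything therefore reduces to proving that $\Phi$ is onto with an $h$-uniform bounded right inverse, and this is where I expect the main difficulty. Surjectivity is equivalent to the non-degeneracy statement that no nonzero $\boldsymbol{\chi}\in\Goly{{\rm c},k-1}(T)$ is $L^2(T)$-orthogonal to $\bB$; by the decoupling this means $\boldsymbol{\chi}_{|\tau}$ is $L^2(\tau)$-orthogonal to $\bB_\tau$ for every $\tau\in\Ths[T]$. To rule this out I would characterise $\bB_\tau$ through the polynomial de Rham complex with homogeneous boundary conditions on a simplex (as in \cite{Arnold:18}), writing divergence-free normal-trace-free $\RTN{k}$ fields as curls of tangential-trace-free N\'ed\'elec bubbles, and then exploit that all simplices share the vertex $\bx_T$ anchoring both the Koszul space $\Goly{{\rm c},k-1}(T)$ and the per-simplex Koszul/Poincar\'e homotopy. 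Because $\bx_T$ is a vertex of every simplex of the star, the local complexes are compatible with the globally defined constraint space — in particular radial fields based at $\bx_T$ have vanishing normal trace on each face through $\bx_T$ — and this simultaneous compatibility is precisely what forces $\boldsymbol{\chi}=0$.

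The hard technical core is this orthogonality argument; by contrast, the $h$-independence of the right inverse is routine. It is obtained by transporting the construction to reference simplices via the contravariant Piola transform — which preserves $\RTN{k}$, normal traces and the divergence constraint — controlling the simplex shapes through mesh regularity and the number of simplices through \eqref{ineq:card.IT.F}, and then assembling the local contributions into a bound $\norm{\Ldeuxd[T]}{\widetilde{\bR}_T^k(\bv)}\lesssim\norm{\Ldeuxd[T]}{\boldsymbol{\eta}}\lesssim\norm{\Ldeuxd[T]}{\bv}$, which yields \eqref{eq:rtn:golyc:d}.
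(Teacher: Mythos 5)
Your reduction is clean as far as it goes: the decoupling of $\RTN{k}_0(\Ths[T])$ together with \eqref{eq:rtn:golyc:b}--\eqref{eq:rtn:golyc:c} into per-simplex bubble spaces $\boldsymbol{B}_\tau$ is correct, and defining $\widetilde{\bR}_T^k(\bv)$ as a minimum-norm solution of the moment problem would indeed give \eqref{eq:rtn:golyc:a} by fiat. But the two load-bearing claims are left unproven, and they are exactly the mathematical content of the lemma. The surjectivity of your map $\Phi$ --- equivalently, that no nonzero $\boldsymbol{\chi}\in\Goly{{\rm c},k-1}(T)$ is $L^2$-orthogonal to all divergence-free, normal-trace-free $\RTN{k}$ bubbles --- is asserted via a ``simultaneous compatibility'' of the local complexes anchored at $\bx_T$, but no argument is given, and the gesture you do offer misfires: it is the \emph{radial} fields $(\bx-\bx_T)q$ whose normal trace vanishes on planes through $\bx_T$, whereas the space at hand is the cross-product Koszul space $(\bx-\bx_T)\times\Poly{k-2}(T)^3$, for which $((\bx-\bx_T)\times\bq)\cdot\bn$ does not vanish on such planes in general. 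In practice the only way to rule out such a $\boldsymbol{\chi}$ is to exhibit a bubble with prescribed Koszul moments --- i.e., to construct precisely the lifting the lemma claims --- so your orthogonality step is circular as it stands. Likewise, the uniform boundedness of the right inverse is not ``routine'': your target space is global on $T$ while the bubble space decouples over a subtriangulation whose combinatorics vary along the mesh family, so there is no fixed reference configuration to which the pair $(\boldsymbol{B},\Goly{{\rm c},k-1}(T))$ can be transported, and the minimum-norm solution is controlled by a smallest nonzero singular value of $\Phi$ that you have no handle on.

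The paper closes both gaps at once, constructively, and the structure of its proof shows what your plan is missing. First, it strengthens the target to the \emph{piecewise} Koszul space $\Goly{{\rm c},k-1}(\Ths[T])=(\bx-\bx_T)\times\Poly{k-2}(\Ths[T])^3\supset\Goly{{\rm c},k-1}(T)$, which decouples the moment problem simplex by simplex and turns both surjectivity and stability into reference-element statements. On $\hat\tau$ it builds an explicit bounded right inverse via the $\hat{\lambda}^2$-weighted curl problem \eqref{eq:appx.def.Ehat} (well posed because $\ROTh:\Goly{{\rm c},k-1}(\hat\tau)\to\Poly{k-2}(\hat\tau)^3$ is an isomorphism, following Kreuzer), then sets $\bE^{k-1}_\tau$ by \eqref{eq:appx.def.E}, so the lifted field is an exact curl (hence divergence-free) carrying an $\hat{\lambda}^2$ factor that kills the whole boundary trace, and finally interpolates into $\RTN{k}(\tau)$. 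The common-vertex assumption then enters not as a soft compatibility principle but quantitatively: since every affine map can be written $\bF_\tau=\mxJ_\tau\hat{\bx}+\bx_T$, the covariant Piola transform maps $\Goly{{\rm c},k-1}(\hat\tau)$ isomorphically onto $(\bx-\bx_T)\times\Poly{k-2}(\tau)^3$ (see \eqref{eq:surj.cpiola} and Remark \ref{rem:cvertex.prop}), which is exactly what lets the per-simplex moment identities assemble into \eqref{eq:rtn:golyc:a} on $T$, with $L^2$-stability inherited from the reference element by shape regularity and \eqref{ineq:card.IT.F}. Completing your proposal would require supplying this (or an equivalent) construction; as written, it reformulates the lemma rather than proves it.
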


\begin{remark}[The common vertex assumption for $k\in\{0,1\}$]\label{rem:cvertex.prop.low}
  The common vertex assumption described in Section \ref{sec:setting:mesh} is not necessary for $k\in\{0,1\}$ since, for those cases, $\Goly{{\rm c},k-1}$ becomes the trivial space, and Lemma \eqref{lemma:appx} is not needed.
\end{remark}

\begin{proof}
  \emph{(i) Well-posedness.} We prove this item in three parts starting with the existence and uniqueness of ${\bR}_{T}^k\uline{\bv}_T$.
  \smallskip\\
  \underline{(i.A) \emph{Existence, uniqueness, and decomposition of  ${\bR}_{T}^k\uline{\bv}_T$.}}
  The existence and uniqueness of a solution to problem  \eqref{eq:darcyT:weak} follows from the classical theory of mixed problems given the compatibility of the selected  spaces; see, e.g., \cite[Section 14]{Ciarlet.Lions:91} and  Lemma \ref{lemma:appx} below.
  In order to prove the a priori estimate \eqref{ineq:rtn:bound}, we decompose ${\bR}_{T}^k\uline{\bv}_T$ as follows:
  \begin{equation}
    \label{eq:v.des}
    \bR_{T}^k\uline{\bv}_T =  {\bvs}' +  \bvs_0,
  \end{equation}
  where:
  \begin{itemize}
  \item ${\bvs}' \in  \RTN{k}(\Ths[T])$ is a lifting of the boundary values defined by prescribing its DOFs as follows:
    \begin{subequations}
      \label{eq:vp.idofs}
      \begin{alignat}{4}
        \label{eq:vp.dofs.int.T}
        &\forall \tau \in \Ths[T],
        &\quad
        \int_\tau \bvs' \cdot \bws
        &= 0,
        &\qquad&  \forall \bws \in \Poly{k-1}(\tau)^3,
        \\ \label{eq:vp.dofs.int}
        &\forall \sigma\in \Fhs[T]^{\rm i},
        &\quad
        \int _\sigma (\bvs' \cdot\bn_{\sigma})\qs 
        &=0
        &\qquad& \forall  \qs \in \Poly{k}(\sigma),
        \\ \label{eq:vp.dofs.bd}
        &
        \forall F \in \Fh[T],\,\forall \sigma \in \Fhs[F],
        &\quad
        \bvs'_{|\sigma} \cdot \bn_{\sigma}
        &= (\bv_F\cdot \bn_{TF})_{|\sigma};
      \end{alignat}
    \end{subequations}
  \item Letting $\Poly{k}_0(\Ths[T])\coloneq \left \{ \qs \in \Poly{k}(\Ths[T]): \int_T\qs=0\right \}$,
    $\bvs_0$ is the first component of the unique solution to the following mixed problem: 
    Find $(\bvs_0,\ps,\ts)\in \RTN{k}_0(\Ths[T])\times \Poly{k}_{0}(\Ths[T])\times\Goly{{\rm c},k-1}(T)$ such that
    \begin{subequations}
      \label{eq:darcyT0:weak}
      \begin{alignat}{2}
        \int_{T} (\DIV \bvs_0) \qs &= \int_{T} (D_T^k\uline{\bv}_T - \DIV \bvs') \qs
        &\qquad&  \forall \qs \in \Poly{k}_0(\Ths[T]),
        \label{eq:darcyT0:weak:a}\\
        \int_{T}  {\bvs}_0 \cdot \xis &= 
        \int_{T} \bv_T\cdot \xis
        &\qquad&
        \forall \xis \in \Goly{{\rm c},k-1}(T),
        \label{eq:darcyT0:weak:ab}\\
        \int_{T}  \bvs_0 \cdot \bws + \int_{T} (\DIV \bws )\ps 
        + \int_{T}  {\bws} \cdot \ts
        &=   \int_{T} ( \bv_T  -\bvs') \cdot \bws
        &\qquad&
        \forall \bws \in \RTN{k}_0(\Ths[T]),
        \label{eq:darcyT0:weak:b}
      \end{alignat}
    \end{subequations}
    where we have used equation \eqref{eq:vp.dofs.int.T} along with the fact that $\Goly{{\rm c},k-1}(T)\subset\Poly{k-1}(\Ths[T])^3$ in the right hand side of \eqref{eq:darcyT:weak:ab} to write $\bv_T$ instead of $\bv_T - \bvs'$.
  \end{itemize}
  \noindent\underline{(i.B) \emph{Boundedness}}.
  We begin by proving the following estimate:
  \begin{align}
    \label{eq:bound.solddarcy}
    \norm{\Ldeuxd[T]}{{\bR}_{T}^k\uline{\bv}_T}
    &
    \lesssim
    \norm{\Ldeuxd[T]}{{\bv}_T}
    +
    h_T
    \|\uline{\bv}_T\|_{1,T}
    +
    \sum_{F\in \Fh[T]}h_F^{\frac{1}{2}} \norm{\Ldeuxd[F]}{\bv_F}.   
  \end{align}
  Let $\Ldeuxxz[T]\coloneq \left \{ \xi \in \Ldeuxx[T]: \int_T\xi=0 \right \}$ and denote by $\theta \in \Hun[T]\cap \Ldeuxxz[T]$ the solution of the equation
  \begin{align}
    \label{eq:neumann:weak}
    \int_T \GRAD \theta \cdot \GRAD \xi = \int_T (D_T^k\uline{\bv}_T  - \DIV \bvs')  \xi  \qquad  \forall \xi \in \Hun[T]\cap \Ldeuxxz[T].
  \end{align}
  We recall that \eqref{eq:neumann:weak} is the weak form of the following strong Neumann problem
  \begin{subequations}
    \label{eq:neumann:strong}
    \begin{alignat}{2}
      - \LAP \theta  &= (D_T^k\uline{\bv}_T  - \DIV \bvs')  &\quad&   \text{in } T,
      \label{eq:neumann:strong:pde}\\
      \frac{\partial \theta}{\partial n}&=0 &\quad&   \text{on } \partial T.
      \label{eq:neumann:strong:bd}
    \end{alignat}
  \end{subequations}
  Since 
    $\int_T (D_T^k\uline{\bv}_T  - \DIV \bvs') = 0$ by  \eqref{eq:hho:div:op} with $q=1$ and using integration by parts for the integral having $\DIV \bvs'$  along with \eqref{eq:vp.dofs.bd}, the compatibility condition for problem   \eqref{eq:neumann:strong} is satisfied, yielding existence and uniqueness of $\theta$.
  Moreover, since $T$ is  a convex polyhedron and the forcing term is in $\Ldeux[T]$,  then $\theta \in H^2(T)\cap \Ldeuxxz[T]$ (see \cite[Section 8.2]{Grisvard:11}) with
  \begin{align}
    \label{ineq:bound:thetH2} 
    \seminorm{H^2(T)}{\theta}
    \lesssim
    %  \norm{\Ldeuxd[T]}{ \LAP \theta}=
    \norm{\Ldeux[T]}{D_T^k\uline{\bv}_T  - \DIV \bvs'},
  \end{align}
  where it can be checked following the argument in the reference that the hidden constant does not depend on  $T$.
  Setting $\xi=\theta$ in \eqref{eq:neumann:weak} and using the Cauchy--Schwarz inequality followed by the Poincaré--Wirtinger inequality $    \norm{\Ldeux[T]}{\zeta} \leq \frac{h_T}{\pi}\seminorm{\Hun[T]}{\zeta}$ valid for all $\zeta \in H^1(T)\cap L^2_0(\Omega)$ (see \cite{Payne.Weinberger:60,Bebendorf:03}), we estimate
  \begin{align}
    \label{ineq:bound:thetH1}
    \seminorm{\Hun[T]}{\theta}
    \lesssim  {h_T}\norm{\Ldeux[T]}{D_T^k\uline{\bv}_T  - \DIV \bvs'}.
  \end{align}
  Let now $\bz_0\coloneq-\GRAD \theta\in\Hund[T]$.
  Defining $\hat{\bz}_0$ $\in\RTN{k}_0(\Ths[T])$ as the interpolate of $\bz_0$ onto $\RTN{k}(\Ths[T])$, and using the commutation property $\DIV \hat{\bz}_0=\pi_{\Ths[T]}^k(\DIV \bz_0)$ with $\pi_{\Ths[T]}^k$ denoting the $L^2$-orthogonal projector onto  $\Poly{k}(\Ths[T])$ (see, e.g., \cite[Section 2.5.2]{Boffi.Brezzi.ea:13}), it is inferred that
  $\DIV \hat{\bz}_0 =\pi_{\Ths[T]}^k(\DIV \bz_0) = (D_T^k\uline{\bv}_T  - \DIV \bvs')$,
  where we have used \eqref{eq:neumann:strong:pde} in the last step.
 Therefore, by \eqref{eq:darcyT0:weak:a}, $\DIV ( \bvs_0- {\hat{\bz}_0}) = 0$.
 Now, using Lemma \ref{lemma:appx}, let 
  $\hat{\bz}_1\coloneq\widetilde{\bR}_T^k(\bv_T - \hat{\bz}_0)\in \RTN{k}_0(\Ths[T])$,
  and set $\hat{\bz}\coloneq\hat{\bz}_0 +\hat{\bz}_1$.
  Using \eqref{eq:rtn:golyc:b}, we get $\DIV( \bvs_0 - \hat{\bz}) =0$; 
  moreover, using \eqref{eq:rtn:golyc:a} and \eqref{eq:darcyT0:weak:ab}, we have $\boldsymbol{\pi}^{{\rm c},k-1}_{\boldsymbol{\cal G},T}(\bvs_0 - \hat{\bz}) = \boldsymbol{0}$.
  Taking then $\bws = \bvs_0 - \hat{\bz}$ as a test function in \eqref{eq:darcyT0:weak:b}, we obtain
  \begin{equation}
    \label{eq:ortho:prop}
    \int_T \bvs_0 \cdot(\bvs_0- \hat{\bz})
    = \int_{T} ( \bv_T  -\bvs') \cdot (\bvs_0- \hat{\bz}).
  \end{equation}
  Thus, it is readily seen that
  \begin{align}
    \norm{\Ldeuxd[T]}{ \bvs_0 - \hat{\bz}}^2
    &=
    \int_{T} (\bvs_0 - \hat{\bz}) \cdot( \bvs_0 - \hat{\bz})\notag\\
    &=
    \int_{T}  ( \bv_T  -\bvs' - \hat{\bz} ) \cdot( \bvs_0 - \hat{\bz})\notag\\
    &  \leq
    \left( \norm{\Ldeuxd[T]}{ \bv_T - \hat{\bz}}
    +
    \norm{\Ldeuxd[T]}{   \bvs' }
    \right)
    \norm{\Ldeuxd[T]}{  \bvs_0 - \hat{\bz}}\notag\\
    &  \leq
    \left( \norm{\Ldeuxd[T]}{ \bv_T - \hat{\bz}_0}
    +
    \norm{\Ldeuxd[T]}{ \hat{\bz}_1}
    +
    \norm{\Ldeuxd[T]}{   \bvs' }
    \right)
    \norm{\Ldeuxd[T]}{  \bvs_0 - \hat{\bz} }
    \notag\\
    &  \lesssim
    \left( \norm{\Ldeuxd[T]}{ \bv_T - \hat{\bz}_0}
    +
    \norm{\Ldeuxd[T]}{   \bvs' }
    \right)
    \norm{\Ldeuxd[T]}{  \bvs_0 - \hat{\bz}
    }\notag\\
    &  \leq
    \left(
    \norm{\Ldeuxd[T]}{ \bv_T - \bz_{0}}
    +
    \norm{\Ldeuxd[T]}{ \bz_{0} - \hat{\bz}_{0}}  
    +
    \norm{\Ldeuxd[T]}{   \bvs' }
    \right)
    \norm{\Ldeuxd[T]}{  \bvs_0 - \hat{\bz}}  \notag\\
    &  \eqcolon
    (\mathfrak{T}_1 + \mathfrak{T}_2 + \mathfrak{T}_3)
    \norm{\Ldeuxd[T]}{  \bvs_0 - \hat{\bz}},
    \label{eq:norm:v0:zhat}
  \end{align}
  where  we have used equation \eqref{eq:ortho:prop} in the second step, 
  Cauchy--Schwarz and triangle inequalities in the third step,
  the definition of $\hat{\bz}$ for the first term in the parentheses   and a triangle inequality in the fourth step,
   the definition of $\hat{\bz}_1$  along with the bound \eqref{eq:rtn:golyc:d} in the fifth step,
 and again a triangle inequality after inserting $\pm{\bz}_0$ into the first norm in the sixth step.\\
 To estimate $\mathfrak{T}_1$, we begin using the triangle inequality  to obtain
  \begin{align}
    \mathfrak{T}_1
    &\leq
    \norm{\Ldeuxd[T]}{\bv_T }
    +
    \norm{\Ldeuxd[T]}{ \bz_0}
    \lesssim
    \norm{\Ldeuxd[T]}{\bv_T }
    +
    h_T\norm{\Ldeux[T]}{D_T^k\uline{\bv}_T  - \DIV \bvs'}\notag\\
 &   \lesssim
    \norm{\Ldeuxd[T]}{\bv_T }
    +
     h_T\left (\norm{\Ldeux[T]}{D_T^k\uline{\bv}_T} + \norm{\Ldeux[T]}{\DIV \bvs'}\right),
\label{eq:RT-proof:T1:bound}
  \end{align}
where we have used \eqref{ineq:bound:thetH1} to bound the second term in the second step, and a triangle inequality in the last step. For the first term in parentheses, integrating by parts the right-hand side of \eqref{eq:hho:div:op}, applying Cauchy--Schwarz and discrete trace inequalities, and taking the supremum over $q\in\Poly{k}(T)$, we obtain
  \begin{align}
    \norm{\Ldeux[T]}{D_T^k\uline{\bv}_T}
    &\lesssim
    \|\uline{\bv}_T\|_{1,T}.
    \label{ineq:DT}
  \end{align}
  To bound the second term in parentheses, we use a discrete inverse inequality to write
  \begin{align}
    \norm{\Ldeux[T]}{\DIV \bvs'}^2
    \lesssim
    \sum _{\tau\in \Ths[T]}\seminorm{H^1(\tau)^3}{ \bvs'}^2
    \lesssim
    h_\tau^{-2}\sum _{\tau\in \Ths[T]}\norm{\Ldeuxd[\tau]}{\bvs'}^2
    \lesssim
    h_T^{-2}\norm{\Ldeuxd[T]}{\bvs'}^2,
    \label{ineq:vprime.div}
  \end{align}
  where the fact that $h_\tau^{-1}h_T\lesssim  1 $ for regular mesh sequences (see \cite[Eq. (1.4)]{Di-Pietro.Droniou:20}) has been used in the last step. 
  Now, to estimate $\norm{\Ldeux[T]^3}{\bvs'}$, we combine \eqref{eq:RTN.L2norm} and \eqref{eq:vp.idofs} to obtain 
  \begin{align}
    \label{ineq:vprime.l2}
    \norm{\Ldeux[T]^3}{\bvs'}^2
    \simeq
    \sum_{\sigma  \in \Fhs[F],F \in \Fh[T]}h_\sigma \norm{\Ldeux[\sigma]}{\bv_F\cdot \bn_{TF}}^2
    \leq
    \sum_{F\in \Fh[T]}h_F \norm{\Ldeuxd[F]}{\bv_F}^2,
  \end{align}
  where we have used the fact that $\sigma\subset F$,
  the inequality $h_\sigma\leq h_F$ valid for any $\sigma\in\Fhs[F]$ and any $F \in \Fh[T]$,
  and the H\"older inequality with exponents $(2,\infty)$ along with $\|\bn_{TF}\|_{L^\infty(F)^3}=1$
  for the third step.
  Therefore plugging  \eqref{ineq:vprime.l2} into \eqref{ineq:vprime.div}, we obtain
  \begin{align*}
    \norm{\Ldeux[T]}{\DIV \bvs'}^2
    \lesssim
    \sum_{F\in \Fh[T]}h_F^{-1} \norm{\Ldeuxd[F]}{\bv_F}^2,
  \end{align*}
  where we have  used the equivalence $h_F \simeq h_T$
  valid for regular mesh sequences (see \cite[Eq. (1.6)]{Di-Pietro.Droniou:20}). Now, using the following  bound  valid for non-negative real numbers $a_i$
  \begin{equation}\label{ineq:l2.bound}
        \sum_i a_i^2 \leq \left(\sum_i a_i \right)^2, 
  \end{equation}
for the previous inequality,  and then plugging the result along with \eqref{ineq:DT}  into
\eqref{eq:RT-proof:T1:bound}, it is inferred that
  \begin{equation}
    \label{eq:RT-proof:T1:bound:2}
    \mathfrak{T}_1
    \lesssim
    \norm{\Ldeuxd[T]}{\bv_T }
    +
    h_T
    \|\uline{\bv}_T\|_{1,T}
    +
    \sum_{F\in \Fh[T]}h_F^{\frac{1}{2}} \norm{\Ldeuxd[F]}{\bv_F}.
  \end{equation}
  \smallskip
  To bound the term $\mathfrak{T}_2$ in \eqref{eq:norm:v0:zhat}, we use  standard interpolation estimates for $\hat{\bz}_0$ (see, e.g., \cite[Proposition 2.5.4]{Boffi.Brezzi.ea:13}) followed by \eqref{ineq:bound:thetH2} to write
  \begin{equation}
    \mathfrak{T}_2
    \lesssim h_T\seminorm{H^2(T)}{\theta}
    \lesssim h_T \norm{\Ldeuxd[T]}{D_T^k\uline{\bv}_T  - \DIV \bvs'}
    \lesssim h_T\|\uline{\bv}_T\|_{1,T} + \sum_{F\in \Fh[T]}h_F^{\frac{1}{2}} \norm{\Ldeuxd[F]}{\bv_F},
    \label{eq:RT-proof:T2:bound}
  \end{equation}
  where we have used a triangle inequality followed by \eqref{ineq:DT}, \eqref{ineq:vprime.div}, and \eqref{ineq:vprime.l2} to conclude.
  \smallskip
  Plugging \eqref{eq:RT-proof:T1:bound:2} and \eqref{eq:RT-proof:T2:bound} into \eqref{eq:norm:v0:zhat}, using \eqref{ineq:vprime.l2} to estimate $\mathfrak{T}_3$, and simplifying, we obtain
  \begin{align}
    \label{eq:norm:v0:zhat:final}
    \norm{\Ldeuxd[T]}{ \bvs_0 - \hat{\bz}}
    &\lesssim
    \norm{\Ldeuxd[T]}{\bv_T }
    +
    h_T
    \|\uline{\bv}_T\|_{1,T}
    +
    \sum_{F\in \Fh[T]}h_F^{\frac{1}{2}} \norm{\Ldeuxd[F]}{\bv_F}.
  \end{align}
  Using the decomposition \eqref{eq:v.des} followed by  triangle inequalities, we finally get 
  \begin{align*}
    \norm{\Ldeuxd[T]}{{\bv}_T  -   {\bR}_{T}^k\uline{\bv}_T}
    %&\leq
    %\norm{\Ldeuxd[T]}{{\bv}_T  -  \bvs_0}
    %+   \norm{\Ldeuxd[T]}{ \bvs'}\\
    &\leq
    \norm{\Ldeuxd[T]}{{\bv}_T  -  \hat{\bz}}
    +
    \norm{\Ldeuxd[T]}{\hat{\bz} -  \bvs_0}
    +  \norm{\Ldeuxd[T]}{ \bvs'}\\
    &\lesssim
    \norm{\Ldeuxd[T]}{{\bv}_T  -  \hat{\bz}_0}
    +
    \norm{\Ldeuxd[T]}{\hat{\bz}_1}
    +
    \norm{\Ldeuxd[T]}{\hat{\bz} -  \bvs_0}
    +  \norm{\Ldeuxd[T]}{ \bvs'}
    \\
    &\lesssim
    \norm{\Ldeuxd[T]}{{\bv}_T  -  \hat{\bz}_0}
    +
    \norm{\Ldeuxd[T]}{\hat{\bz} -  \bvs_0}
    +  \norm{\Ldeuxd[T]}{ \bvs'}
    \\
    &\lesssim
    \norm{\Ldeuxd[T]}{{\bv}_T  - \bz_0 }
    +
    \norm{\Ldeuxd[T]}{\bz_0  -  \hat{\bz}_0}
    +
    \norm{\Ldeuxd[T]}{\hat{\bz} -  \bvs_0}
    +  \norm{\Ldeuxd[T]}{ \bvs'}
    \\
    &
    \lesssim
    \norm{\Ldeuxd[T]}{\bv_T }
    +
     h_T
    \|\uline{\bv}_T\|_{1,T}
    +
    \sum_{F\in \Fh[T]}h_F^{\frac{1}{2}} \norm{\Ldeuxd[F]}{\bv_F},    
  \end{align*}
  where we have used the definition of $\hat{\bz}_1$ along with the bound \eqref{eq:rtn:golyc:d} in the third step, a triangle inequality in the fourth step, and the bounds \eqref{ineq:vprime.l2}, and \eqref{eq:RT-proof:T1:bound:2}--\eqref{eq:norm:v0:zhat:final} in the last step.
  Inserting $\pm{\bv}_T$ into the left-hand side of \eqref{eq:bound.solddarcy} and using a triangle inequality followed by the above estimate, \eqref{eq:bound.solddarcy} follows.
  \smallskip\\
  \underline{(i.C) \emph{Proof  of the bound \eqref{ineq:rtn:bound}}}.
  Recalling that $\uline{\bI}_T^k$ is obtained restricting the global interpolator \eqref{eq:Ih} to an element $T$, letting $\hat{\bvs} \coloneq  {\bR}_{T}^k(\uline{\bI}_T^k\bv_T)$, and using the triangle inequality, we get that
  \begin{equation}
    \label{ineq:rtn.des}
    \norm{\Ldeuxd[T]}{\bv_T - {\bR}_{T}^k\uline{\bv}_T  }
    \leq
    \norm{\Ldeuxd[T]}{\bv_T -  \hat{\bvs} }
    +  \norm{\Ldeuxd[T]}{ \hat{\bvs} -  {\bR}_{T}^k\uline{\bv}_T  }
    \eqcolon \mathfrak{T}_1+\mathfrak{T}_2.
  \end{equation}

  By condition \eqref{eq:darcyT:weak:a},  we have that $\DIV \hat{\bvs} = D_T^k(\uline{\bI}_T^k\bv_T) \in \Poly{k}(T)\subset\Poly{k}(\Ths[T])$.
  But, since $\bv_T\in\Poly{k}(T)$, the commutation property \eqref{eq:DT.commuting} gives $D_T^k(\uline{\bI}_T^k \bv_T)  = \pi_T^k(\DIV \bv_T)=\DIV \bv_T$, so that $\DIV(\hat{\bvs}- \bv_T) = 0$.
In addition, by  \eqref{eq:darcyT:weak:ab} we have $\bpi^{{\rm c},k-1}_{\Goly{},T}(\hat{\bvs}- \bv_T)=0$,
 and then observing that $\hat{\bvs}-\bv_T\in\RTN{k}_0(\Ths[T])$, and taking $\bws = \hat{\bvs}- \bv_T$ in \eqref{eq:darcyT:weak:b}, it is inferred that $\norm{\Ldeuxd[T]}{\hat{\bvs}- \bv_T}^2=0$, hence $\mathfrak{T}_1 = 0$.
  
  Let us now estimate the term $\mathfrak{T}_2$.
  By linearity of $ {\bR}_{T}^k$, we can write $\mathfrak{T}_2 = \norm{\Ldeuxd[T]}{ {\bR}_{T}^k(\uline{\bI}_T^k\bv_T - \uline{\bv}_T )}$.
  Hence, using the bound  \eqref{eq:bound.solddarcy}, the fact that  $(\uline{\bI}_T^k\bv_T - \uline{\bv}_T)_T = 0$ and $(\uline{\bI}_T^k\bv_T - \uline{\bv}_T)_F = (\bv_T-\bv_F)$ for all $F\in\Fh[T]$, and recalling the definition \eqref{eq:norm.1T}, we can write
  \begin{align*}
    \mathfrak{T}_2
    &\lesssim  h_T \norm{1,T}{\uline{\bI}_T^k\bv_T - \uline{\bv}_T }
    + \sum_{F\in \Fh[T]}h_F^{\frac{1}{2}}  \norm{\Ldeuxd[F]}{\bv_T- \bv_F}\\
    &=
    h_T \seminorm{1,\partial T}{\uline{\bv}_T} +  \sum_{F\in \Fh[T]}h_Fh_F^{-\frac{1}{2}}  \norm{\Ldeuxd[F]}{\bv_T- \bv_F}
    {\lesssim}
    h_T \seminorm{1,\partial T}{\uline{\bv}_T},
  \end{align*}
  where we have used the inequality $h_F\leq h_T$ in the last step.
  Plugging this last bound along with  $\mathfrak{T}_1=0$ into \eqref{ineq:rtn.des}, the conclusion follows.
  \medskip\\
  \emph{(ii) Approximation.}
%%%%%%%%%%%%%%%%%%%%%%%%%%%%%%%%%%%%%%%%%%%%%%%%%%%%%%%%%%%%%%%%%%%%%%%%%%%5
  To prove the approximation estimate  \eqref{ineq:rtn:approx}, let $T\in\Th$ and denote,  for the sake of brevity, by $\hat{\uline{\bv}}_T\coloneq\uline{\bI}_T^k\bv$ the interpolate of $\bv$ on $\uline{\bU}_T^k$.
  We begin using the triangle inequality to write
  \begin{align}
    \norm{\Ldeuxd[T]}{\bv  - {\bR}_{T}^k(\hat{\uline{\bv}}_T)}
    &\leq
    \norm{\Ldeuxd[T]}{\bv  - \hat{\bv}_T  }
    +
    \norm{\Ldeuxd[T]}{\hat{\bv}_T  - {\bR}_{T}^k(\hat{\uline{\bv}}_T)  }\notag\\
    &\lesssim
    \norm{\Ldeuxd[T]}{\bv  - \bpi_T^k{\bv}  }
    +
    h_T   \seminorm{1,\partial T}{\hat{\uline{\bv}}_T}
    \eqcolon \mathfrak{T}_1 + \mathfrak{T}_2,
    \label{ineq:rtn.cons.T1.T2}
  \end{align}
  where in the last step we have used the definition of $\uline{\bI}_T^k$ and the bound \eqref{ineq:rtn:bound} for the first and second terms, respectively.
  To  bound $\mathfrak{T}_1$ we use  \eqref{eq:l2proj:error:cell} with $(l,m,r,s)=(k,0,2,k+1)$, so we get
\begin{equation}\label{ineq:rtn.cons.bound.T1}
\mathfrak{T}_1 \lesssim h_T^{k+1}   \seminorm{H^{k+1}(T)^3}{\bv}.
\end{equation}
Now to bound $\mathfrak{T}_2$ we first take the square, use the definition \eqref{eq:norm.1T} of the boundary seminorm and the equivalence $h_T\simeq h_F$ (valid for regular meshes) to obtain
\begin{align*}
(\mathfrak{T}_2)^2 
&= h_T^2\sum_{F\in \calF_T}\! h_F^{-1}\| \hat{{\bv}}_F-\hat{{\bv}}_T \|^2_{L^2(F)^3}
\lesssim
\sum_{F\in \calF_T}\! h_T \left(\| \bpi_F^k\bv- \bv \|^2_{L^2(F)^3}
+
 \|  \bv -\bpi_T^k\bv \|^2_{L^2(F)^3}\right),
\end{align*}
where in the last step we have used the Young inequality.
Now using a triangle inequality and  standard properties of the 
$L^2$-projectors $\bpi_F^k$ and $\bpi_T^k$ on $F$,  we have
\begin{equation}\label{ineq.piF.piT}
\| \bpi_F^k\bv- \bv \|^2_{L^2(F)^3}
=
\inf_{\bw \in \Poly{k}(F)^3}\| \bw - \bv \|^2_{L^2(F)^3}
\leq\|\bpi_T^k\bv -\bv \|^2_{L^2(F)^3}.
\end{equation}
Thus using this  for $F\in \calF_T$, the bound \eqref{ineq:l2.bound}, and then taking the square root, it is inferred that
$$
\mathfrak{T}_2 
\lesssim
\sum_{F\in \calF_T}\! h_T^\frac{1}{2}  \|  \bv -\bpi_T^k\bv \|_{L^2(F)^3}.
$$
Finally,  using \eqref{eq:l2proj:error:faces} with $(l,m,r,s)=(k,0,2,k+1)$ to bound $\mathfrak{T}_2$ along with \eqref{ineq:rtn.cons.bound.T1}, and plugging the result into \eqref{ineq:rtn.cons.T1.T2}, we conclude.
%%%%%%%%%%%%%%%%%%%%%%%%%%%%%%%%%%%%%%%%%%%%%%%%%%%%%%%%%%%%%%%%%%%%%%%%%%%%%%%%%%%
\medskip\\
\emph{(iii) Consistency.}
To simplify the notation let us define the space $\Goly{k-1}(T)\coloneq \GRAD \Poly{k}(T)^3$, and let 
$\bpi^{k-1}_{\Goly{},T}$ its $L^2$-orthogonal projector.
In addition, let $\bq\coloneq\bpi_T^{k-1}(\bR_{T}^k\uline{\bv}_T- \bv_T)$. As mentioned before,
the decomposition \eqref{eq:kz.decomp} is not necessarily orthogonal; nevertheless, by \cite[Lemma 1]{Di-Pietro.Droniou:21} there exists
a \emph{recovery operator} $\frakR_{\Goly{},\Goly{{\rm c}}}:\Goly{k-1}(T)\times\Goly{{\rm c},k-1}(T)\rightarrow \Poly{k-1}(T)^3$ 
such that $\bq=\frakR_{\Goly{},\Goly{{\rm c}}}(\bpi^{k-1}_{\Goly{},T}\bq,{\bpi^{{\rm c},k-1}_{\Goly,T}\bq})$, 
and $\norm{L^2(T)^3}{\bq}\simeq\norm{L^2(T)^3}{\bpi^{k-1}_{\Goly{},T}\bq}+\norm{L^2(T)^3}{{\bpi^{{\rm c},k-1}_{\Goly,T}\bq}}$. 
Using this last equation and the linearity of $\bpi_T^{k-1}$, it is enough to show that
$\norm{L^2(T)^3}{\bpi^{k-1}_{\Goly{},T}\bq}=\norm{L^2(T)^3}{{\bpi^{{\rm c},k-1}_{\Goly,T}\bq}}=0$.
Since ${\Goly{{\rm c},k-1}}\subset\Poly{k-1}(T)^3$, we have
${\bpi^{{\rm c},k-1}_{\Goly,T}}={\bpi^{{\rm c},k-1}_{\Goly,T}}\circ\bpi^{k-1}_T$,
and by
the equation {\eqref{eq:darcyT:weak:ab}}, we infer that $\norm{L^2(T)^3}{\bpi^{{\rm c},k-1}_{\Goly,T}\bq}=0$.
  Using \eqref{eq:darcyT:weak:a} along with an integration by parts and the boundary condition  \eqref{eq:darcyT:weak:bd}, we infer that, for all $\phi\in\Poly{k}(T)$,
\begin{align*}
  - \int_{T}  \bR_T^k\uline{\bv}_T \cdot \GRAD \phi
  + \cancel{\sum_{F\in \Th[T]}\int_F({\bv}_F \cdot\bn_{TF})\phi}
  = \int_{T} (D_T^k\uline{\bv}_T) \phi
  =  - \int_T \bv_T\cdot \GRAD \phi
  + \cancel{\sum_{F\in \calF_T} \int_F (\bv_F \cdot \bn_{TF}) \phi},
\end{align*}
where in the last step we have used the definition \eqref{eq:hho:div:op} of $D_T^k$.
This shows that  $\bpi^{k-1}_{\Goly{},T}(\bR_T^k\uline{\bv}_T  - {\bv}_T)=0$.
Finally, using
$\bpi^{k-1}_{\Goly{},T}=\bpi^{k-1}_{\Goly{},T}\circ\bpi^{k-1}_T$,
we obtain
$\norm{L^2(T)^3}{\bpi^{k-1}_{\Goly{},T}\bq}=0$,
and \eqref{ineq:rtn:consis} follows.
\end{proof}
\begin{remark}[Convexity assumption]\label{rem:convex.a}
The convexity assumption introduced in Section \ref{sec:setting:mesh}  can be relaxed if instead we make the following assumptions for $T\in\Th$:
\begin{enumerate}
        \item There exist a point $\widetilde{\bx}_T \in T$  such that $T$ is a star-shaped with respect to it.
        \item There exists $\tau \in \Ths[T]$ such that contains the ball  $\boldsymbol{\mathcal{B}}(\widetilde{\bx}_T,r_\tau)$  where $r_\tau$ denotes the inradius of $\tau$.
\end{enumerate}

Then instead of solving the problems \eqref{eq:neumann:weak}--\eqref{eq:neumann:strong},  
invoke the Lemma III.3.1 of \cite{Galdi:2011} 
to obtain $z_0\in H_0^1(T)^3$  such that $\DIV \bz_0=D_T^k\uline{\bv}_T  - \DIV \bvs'$ with $\norm{H^1(T)^3}{\bz_0}\lesssim\norm{L^2(T)}{D_T^k\uline{\bv}_T  - \DIV \bvs'}$,
and use  \cite[Eq. (II.5.5)]{Galdi:2011} to get a Poincaré-like inequality $\norm{\Ldeuxd[T]}{\bz_0}\lesssim h_T\seminorm{H^1(T)^3}{\bz_0}$
and use it in \eqref{eq:RT-proof:T1:bound} 
in the proof  of item (i) of Lemma \ref{lemm:rtn}.
\end{remark}

Let $\RTN{k}(\Ths)$ denote the global ($\Hdiv$-conforming) Raviart--Thomas--N\'ed\'elec space on $\Ths$.
We define the \emph{global velocity reconstruction} $\bR_h^k: \uline{\bU}_h^k \rightarrow \RTN{k}(\Ths)$ patching the local contributions:
For all  $\uline{\bv}_h \in \uline{\bU}_h^k$,
\begin{align*}
  (\bR_h^k\uline{\bv}_h)_{|T}
  \coloneq 
  \bR_T^k\uline{\bv}_T \qquad\forall T \in {\cal{T}}_h.
\end{align*}
Note that $\bR_h^k\uline{\bv}_h$ is well-defined, since its normal components across each mesh interface are continuous as a consequence of  \eqref{eq:darcyT:weak:bd}  combined with the single-valuedness of interface unknowns.

\begin{proposition}[Sobolev inequalities for the velocity reconstruction]\label{prop:rtn:Lp.bound}
  It holds, for all $r\in[1,6]$ and all $\uline{\bv}_h\in\uline{\bU}_{h,0}^k$,
  \begin{equation}
    \| \bR_h^k\uline{\bv}_h\|_{L^r(\Omega)^3} \lesssim  \| \uline{\bv}_h \|_{1,h},
    \label{eq:rtn:Lp.bound}
  \end{equation}
  where the hidden constant is independent of both $h$ and $\uline{\bv}_h$, but possibly depends on $\Omega$, $k$, $r$, and the mesh regularity parameter.
\end{proposition}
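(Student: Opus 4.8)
The plan is to reduce the global Sobolev estimate \eqref{eq:rtn:Lp.bound} to a combination of the local $L^2$-bound \eqref{ineq:rtn:bound} already established in Lemma \ref{lemm:rtn}(i) together with the discrete Sobolev embedding \eqref{eq:disc:sobembd} available for the broken polynomial function $\bv_h$. The key observation is that $\bR_h^k\uline{\bv}_h$ and $\bv_h$ are close in the $L^2$-norm, element by element, so the global $L^r$-norm of the reconstruction can be controlled by that of $\bv_h$ plus a remainder that is itself controlled by $\|\uline{\bv}_h\|_{1,h}$. Since $\RTN{k}(\Ths)$ and $\Poly{k}(\Ths)^3$ are finite-dimensional broken polynomial spaces on the simplicial submesh, an inverse inequality will let me trade the $L^r$-norm of the difference $\bR_h^k\uline{\bv}_h - \bv_h$ for its $L^2$-norm at the price of negative powers of $h_\tau$, which the factor $h_T$ in \eqref{ineq:rtn:bound} will absorb.

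Concretely, I would first write, by the triangle inequality in $L^r(\Omega)^3$,
\begin{equation*}
  \|\bR_h^k\uline{\bv}_h\|_{L^r(\Omega)^3}
  \leq
  \|\bv_h\|_{L^r(\Omega)^3}
  +
  \|\bR_h^k\uline{\bv}_h - \bv_h\|_{L^r(\Omega)^3}.
\end{equation*}
The first term is bounded by $\|\uline{\bv}_h\|_{1,h}$ directly via \eqref{eq:disc:sobembd}. For the second term I would work locally: on each $T\in\Th$ the difference $\bR_T^k\uline{\bv}_T - \bv_T$ is a broken polynomial on $\Ths[T]$, so a discrete inverse inequality between $L^r(\tau)$ and $L^2(\tau)$ on each simplex $\tau\in\Ths[T]$ gives $\|\bR_T^k\uline{\bv}_T - \bv_T\|_{L^r(\tau)^3}\lesssim h_\tau^{d(\frac1r-\frac12)}\|\bR_T^k\uline{\bv}_T - \bv_T\|_{L^2(\tau)^3}$. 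Using $h_\tau\simeq h_T$ for regular mesh sequences and summing over $\tau\in\Ths[T]$, this yields an element-wise bound of $\|\bR_T^k\uline{\bv}_T - \bv_T\|_{L^r(T)^3}$ by $h_T^{d(\frac1r-\frac12)}\|\bR_T^k\uline{\bv}_T - \bv_T\|_{L^2(T)^3}$, into which I substitute \eqref{ineq:rtn:bound} to obtain a bound proportional to $h_T^{1+d(\frac1r-\frac12)}\seminorm{1,\partial T}{\uline{\bv}_T}$. Since $r\le 6$ and $d\le 3$ the exponent $1+d(\frac1r-\frac12)$ is nonnegative, so $h_T$ to that power is bounded by a constant depending on $\Omega$; assembling over $T$ and applying a discrete Hölder/$\ell^r$--$\ell^2$ inequality over the mesh then controls $\|\bR_h^k\uline{\bv}_h - \bv_h\|_{L^r(\Omega)^3}$ by $\|\uline{\bv}_h\|_{1,h}$.

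The main obstacle I anticipate is handling the passage from the local $\ell^2$-type sums produced by \eqref{ineq:rtn:bound} and the seminorm $\seminorm{1,\partial T}{\uline{\bv}_T}$ to the global $L^r$-norm cleanly when $r\neq 2$: the inverse inequality introduces $h$-weights that must be tracked carefully, and one needs the exponent on $h_T$ to be nonnegative (which holds precisely because $r\in[1,6]$ and $d\le 3$) so that the boundedness of $\Omega$ can absorb these factors. A cleaner alternative, which I would actually prefer, is to avoid the inverse inequality on $L^r$ altogether by comparing $\bR_h^k\uline{\bv}_h$ directly to $\bv_h$ in a way that the \emph{broken} discrete Sobolev embedding and the consistency/approximation structure already give; but the inverse-inequality route above is the most robust and self-contained, relying only on results stated earlier in the excerpt, namely \eqref{ineq:rtn:bound}, \eqref{eq:disc:sobembd}, and the mesh-regularity equivalences $h_\tau\simeq h_T$ and $h_F\simeq h_T$.
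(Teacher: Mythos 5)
Your proposal is correct and follows essentially the same route as the paper's own proof: a triangle inequality comparing $\bR_h^k\uline{\bv}_h$ with $\bv_h$, the discrete Sobolev embedding \eqref{eq:disc:sobembd} for the latter, and a reverse Lebesgue (inverse) inequality on each simplex $\tau\in\Ths[T]$ combined with \eqref{ineq:rtn:bound} and mesh regularity, yielding exactly the paper's weight $h_T^{\nicefrac{(6-r)}{2}}$ whose nonnegative exponent (precisely because $r\le 6$ in dimension $3$) allows the concluding $\ell^r$--$\ell^2$ summation, which the paper delegates to \cite[Proposition 3]{Castanon-Quiroz.Di-Pietro:20}. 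The only cosmetic difference is that you apply the triangle inequality globally in $L^r(\Omega)^3$ before localizing, whereas the paper inserts $\pm\bv_T$ element by element and then sums using $(a+b)^r\lesssim a^r+b^r$.
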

\begin{proof}
    Let a mesh element $T\in\calT_h$ be fixed.
    Inserting $\pm\bv_T$ into the norm and using a triangle inequality, we can write
    \begin{equation}
    \begin{split}  
    \| \bR_T^k\uline{\bv}_T\|_{L^r(T)^3}
    &\leq \| \bR_T^k\uline{\bv}_T - {\bv}_T \|_{L^r(T)^3}  + \| {\bv}_T \|_{L^r(T)^3}
    \\
    &= 
    \left( 
     \sum _{\tau\in\Ths[T]}
    \| \bR_T^k\uline{\bv}_T - {\bv}_T \|_{L^r(\tau)^3}^r
\right)^\frac{1}{r}
    + \| {\bv}_T \|_{L^r(T)^3}.
    \label{eq:rtn:L4:split}
  \end{split}  
  \end{equation}
    From the discrete Lebesgue embeddings proved in \cite[Lemma 1.25]{Di-Pietro.Droniou:20}, it follows that, for all $(\alpha,\beta)\in [1,+\infty]$, all $X\in\Th\cup\Ths$, and all $\zeta\in\Poly{l}(X)$ for $l\geq0$,
  \begin{equation}
    \|\zeta\|_{L^{\alpha}(X)}\lesssim h_X^{\frac{3}{\alpha}-\frac{3}{\beta}}  \|\zeta\|_{L^{\beta}(X)},
    \label{eq:rev:emb:T}
  \end{equation}
  with hidden constant independent of $h$, $X$, and $\zeta$, but possibly depending on $l$, $\alpha$, $\beta$, and the mesh regularity parameter.
  Since  $(\bR_T^k\uline{\bv}_T - {\bv}_T)_{|\tau}\in \Poly{k+1}(\tau)^3$, we use
  \eqref{eq:rev:emb:T} for $(X,l,\alpha,\beta)=(\tau,k+1,r,2)$ in the term in parentheses of  \eqref{eq:rtn:L4:split} to write
  \begin{equation}
    \begin{aligned}\label{eq:rtn:l4:diff:bound}
      \sum _{\tau\in\Ths[T]}
      \| \bR_T^k\uline{\bv}_T - {\bv}_T \|_{L^2(\tau)^r} ^r
      &\lesssim 
      \sum _{\tau\in\Ths[T]}
      h_\tau^{r\left(\frac{3}{r}-\frac{3}{2}\right)} \| \bR_T^k\uline{\bv}_T - {\bv}_T \|_{L^2(\tau)^3}^r
      \\
      &\lesssim 
      h_T^{r\left(\frac{3}{r}-\frac{3}{2}\right)} \| \bR_T^k\uline{\bv}_T - {\bv}_T \|_{L^2(T)^3}^r
      \lesssim 
      h_T^{r\left(\frac{3}{r}-\frac{1}{2}\right)} \seminorm{1,\partial T}{\uline{\bv}_T}^r,
    \end{aligned}
  \end{equation}
  where  we have used $\tau\subset T$ and $h_\tau\le h_T$ for all $\tau\in\Ths[T]$ along with the uniform bound \eqref{ineq:card.IT.F} on $\card(\Ths[T])$ in the second step, and the estimate \eqref{ineq:rtn:bound} to conclude.
  Plugging \eqref{eq:rtn:l4:diff:bound} into \eqref{eq:rtn:L4:split}, raising the resulting inequality to the $r$-th power, using the inequality $(a+b)^r\lesssim a^r+b^r$ valid for any nonnegative real numbers $a$ and $b$, and summing over $T\in\calT_h$, we get
  \begin{equation*}%% \label{eq:rtn:Lp.bound:basic}
    \| \bR_T^k\uline{\bv}_T\|_{L^r(\Omega)^3}^r
    \lesssim \sum_{T\in\calT_h} h_T^{\frac{6-r}2}| \uline{\bv}_T |_{1,\partial T}^r
    +  \| {\bv}_h \|_{L^r(\Omega)^3}^r.
  \end{equation*}
  The proof now continues as that of \cite[Proposition 3]{Castanon-Quiroz.Di-Pietro:20}.
  The details are omitted for the sake of conciseness.
\end{proof}

%------------------------------------------------------------------------------%

\subsection{Gradient reconstruction on a submesh}\label{sec:discrete.setting:gradient.reconstruction}

Let an element $T\in\calT_h$ be fixed.
For every face $\sigma\in\Fhs[T]^{\rm i}$, we introduce an arbitrary but fixed ordering of the elements $\tau_1$ and $\tau_2$ such that $\sigma \subset \partial \tau_1 \cap \partial \tau_2$, and let $\bn_\sigma\coloneq\bn_{\tau_1\sigma}=-\bn_{\tau_2\sigma}$, where  $\bn_{\tau_i\sigma}, i\in\{1,2\}$,
denotes the unit vector normal to $\sigma$ pointing out of $\tau_i$ (see Figure \ref{fig:simplices.faces.T.b}).
With this convention, for every scalar-valued function $\zeta$ admitting a possibly two-valued trace on $\sigma$, we define the jump of $\zeta$ across $\sigma$ as
\begin{equation}
  \label{def:jump.trace.op}
  \llbracket \zeta\rrbracket_\sigma \coloneq \zeta_{|\tau_1} - \zeta_{|\tau_2}.
\end{equation}
When applied to vector- or tensor-valued functions, the jump  operator acts component-wise.

For any polyomial degree $l\geq0$, we then define the local gradient reconstruction $\bG_{\Ths[T]}^{l}:\uline{\bU}_T^k \rightarrow \Poly{l}(\Ths[T])^{3\times 3}$ such that, for all $\uline{\bv}_T \in \uline{\bU}_T^k$ and all $\btau \in \Poly{l}(\Ths[T])^{3\times 3}$,
\begin{subequations}
  \label{eq:disc:grad}
  \begin{alignat}{2}
    \int_T \bG^l_{\Ths[T]} \uline{\bv}_T: \btau
    &=
    \int_T \GRAD \bv_T :  \btau + \sum_{F\in\calF_T}\int_F (\bv_F-\bv_T)\cdot\btau\bn_{TF}
    \label{eq:disc:grad:b}
    \\
    &=
    - \int_T \bv_T\cdot (\DIV \btau)
    +\sum_{\sigma\in\Fhs[T]^{\rm i}}\int_\sigma \bv_T\cdot  \llbracket \btau\rrbracket_\sigma \bn_\sigma
    + \sum_{F\in\calF_T}\int_F \bv_F\cdot\btau\bn_{TF},
    \label{eq:disc:grad:a}
  \end{alignat}
\end{subequations}
where we have used an integration by parts to pass to the second line.
The above definition is an extension of the operator $\bG_{T}^{l}:\uline{\bU}_T^k \rightarrow \Poly{l}(T)^{3\times 3}$  introduced in \cite{Castanon-Quiroz.Di-Pietro:20,Di-Pietro.Krell:18}, which is defined using  $\Poly{l}(T)^{3\times 3}$ instead of $\Poly{l}(\Ths[T])^{3\times 3}$ as a test space, and thus we  have
$\boldsymbol{\pi}_T^l\bG_{\Ths[T]}^{l} = \bG_{T}^{l}$.
The gradient reconstruction $\bG_{T}^{k}$ will be used in the viscous term, while the enriched gradient reconstruction $\bG_{\Ths[T]}^{2(k+1)}$
will be used  in the convective term (see Section \ref{sec:discrete.problem:convective.term}).

\begin{lemma}[Properties of $\bG_{\Ths[T]}^{l}$]
  \label{lemm:disc:grad}
  The  operator $\bG_{\Ths[T]}^{l}$ has the following properties:
  \begin{enumerate}
  \item \emph{Boundedness.} For all $\uline{\bv}_T \in \uline{\bU}_T^k$,
    it holds  
    \begin{equation}
      \|\bG^l_{\Ths[T]}\uline{\bv}_T \|_{L^2(T)^{3\times 3}}
      \lesssim
      \| \uline{\bv}_T  \|_{1,T}.
      \label{eq:disc:grad:bound}
    \end{equation}
  \item \emph{Consistency.}
    For all $\bv\in H^{k+1}(T)^3$ and all $l>k$, it holds,
    \begin{equation}
      \| \bG^l_{\Ths[T]}\uline{\bI}_T^k{\bv}  - \nabla \bv \|_{L^2(T)^{3\times 3}}
      \lesssim
      h_T^{k}|\bv|_{H^{k+1}(T)^3}.
      \label{eq:disc:grad:consistency}
    \end{equation}
  \end{enumerate}
\end{lemma}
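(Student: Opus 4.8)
The plan is to prove the two items by testing the defining relation \eqref{eq:disc:grad:b} against a judiciously chosen matrix-valued field, the only genuinely submesh-dependent ingredient being a discrete trace inequality. Since elements of $\Poly{l}(\Ths[T])^{3\times 3}$ are merely piecewise polynomial on $\Ths[T]$, this inequality must be applied \emph{simplex by simplex} on the simplices $\tau$ having a face $\sigma\subset F$ and then summed, the summation being absorbed by the uniform cardinality bound \eqref{ineq:card.IT.F} and the regularity equivalences $h_\tau\simeq h_F\simeq h_T$; this yields $\|\btau\|_{L^2(F)^{3\times 3}}\lesssim h_F^{-\nicefrac12}\|\btau\|_{L^2(T)^{3\times 3}}$ for all $\btau\in\Poly{l}(\Ths[T])^{3\times 3}$.

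For the boundedness \eqref{eq:disc:grad:bound}, I would take $\btau=\bG^l_{\Ths[T]}\uline{\bv}_T$ in \eqref{eq:disc:grad:b}. The volume contribution is controlled by Cauchy--Schwarz by $\|\GRAD\bv_T\|_{L^2(T)^{3\times 3}}\|\bG^l_{\Ths[T]}\uline{\bv}_T\|_{L^2(T)^{3\times 3}}$. For the face terms, $\int_F(\bv_F-\bv_T)\cdot\btau\bn_{TF}\le\|\bv_F-\bv_T\|_{L^2(F)^3}\|\btau\|_{L^2(F)^{3\times 3}}$ (using $|\bn_{TF}|=1$); estimating $\|\btau\|_{L^2(F)^{3\times 3}}$ by the simplex-wise trace inequality above, pulling out $\|\btau\|_{L^2(T)^{3\times 3}}$, applying Cauchy--Schwarz on the uniformly bounded sum over $F\in\Fh[T]$, and recognising the boundary seminorm $\seminorm{1,\partial T}{\uline{\bv}_T}$ of \eqref{eq:norm.1T}, I arrive at $\|\bG^l_{\Ths[T]}\uline{\bv}_T\|_{L^2(T)^{3\times 3}}^2\lesssim\|\uline{\bv}_T\|_{1,T}\,\|\bG^l_{\Ths[T]}\uline{\bv}_T\|_{L^2(T)^{3\times 3}}$, whence \eqref{eq:disc:grad:bound} after simplification.

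For the consistency \eqref{eq:disc:grad:consistency}, let $\bpi^l_{\Ths[T]}$ denote the $L^2$-orthogonal projector onto $\Poly{l}(\Ths[T])^{3\times 3}$ and split, by the triangle inequality, $\|\bG^l_{\Ths[T]}\uline{\bI}_T^k\bv-\GRAD\bv\|_{L^2(T)^{3\times 3}}\le\|\bG^l_{\Ths[T]}\uline{\bI}_T^k\bv-\bpi^l_{\Ths[T]}\GRAD\bv\|_{L^2(T)^{3\times 3}}+\|\bpi^l_{\Ths[T]}\GRAD\bv-\GRAD\bv\|_{L^2(T)^{3\times 3}}$. The second term is a plain projection error: since $\GRAD\bv\in H^k(T)^{3\times 3}$ and $l>k$, estimate \eqref{eq:l2proj:error:cell} applied on each $\tau\in\Ths[T]$ with $m=0$ and $s=k$ gives $\lesssim h_T^k|\bv|_{H^{k+1}(T)^3}$. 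For the first term I set $\btau$ equal to the polynomial difference $\bG^l_{\Ths[T]}\uline{\bI}_T^k\bv-\bpi^l_{\Ths[T]}\GRAD\bv$; using \eqref{eq:disc:grad:b} with $\bv_T=\bpi_T^k\bv$, $\bv_F=\bpi_F^k\bv$ together with $\int_T\bpi^l_{\Ths[T]}\GRAD\bv:\btau=\int_T\GRAD\bv:\btau$, the quantity $\int_T\btau:\btau$ reduces to $\int_T\GRAD(\bpi_T^k\bv-\bv):\btau+\sum_{F\in\Fh[T]}\int_F(\bpi_F^k\bv-\bpi_T^k\bv)\cdot\btau\bn_{TF}$. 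The volume contribution is bounded by $|\bpi_T^k\bv-\bv|_{H^1(T)^3}\|\btau\|_{L^2(T)^{3\times 3}}\lesssim h_T^k|\bv|_{H^{k+1}(T)^3}\|\btau\|_{L^2(T)^{3\times 3}}$ via \eqref{eq:l2proj:error:cell} ($m=1$, $s=k+1$). For the face contribution I would use the commutation $\bpi_F^k\bv-\bpi_T^k\bv=\bpi_F^k(\bv-\bpi_T^k\bv)$, valid because $(\bpi_T^k\bv)_{|F}\in\Poly{k}(F)^3$, to get $\|\bpi_F^k\bv-\bpi_T^k\bv\|_{L^2(F)^3}\le\|\bv-\bpi_T^k\bv\|_{L^2(F)^3}$, then combine the face-trace estimate with \eqref{eq:l2proj:error:faces} ($m=0$, $s=k+1$, giving $\|\bv-\bpi_T^k\bv\|_{L^2(F)^3}\lesssim h_T^{k+\nicefrac12}|\bv|_{H^{k+1}(T)^3}$) and the cardinality bound, obtaining again $\lesssim h_T^k|\bv|_{H^{k+1}(T)^3}\|\btau\|_{L^2(T)^{3\times 3}}$. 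Dividing by $\|\btau\|_{L^2(T)^{3\times 3}}$ and collecting the two pieces yields \eqref{eq:disc:grad:consistency}.

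I expect the main obstacle to be precisely the careful treatment of the trace terms on the faces $F\in\Fh[T]$: because the test tensors live only in $\Poly{l}(\Ths[T])^{3\times 3}$ and not in $\Poly{l}(T)^{3\times 3}$, their trace onto $F$ cannot be bounded by a single inverse inequality on $T$ but must be handled through the simplicial faces $\sigma\subset F$ and controlled simplex-wise, with the summation absorbed by \eqref{ineq:card.IT.F} and $h_\tau\simeq h_F\simeq h_T$. Once this submesh trace bound is in place, the remaining steps are routine $L^2$-projection estimates, and the representation \eqref{eq:disc:grad:a} is not needed for either item.
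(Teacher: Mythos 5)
Your proposal is correct, and for the consistency estimate it takes a genuinely different (and slightly leaner) route than the paper. For item (i), the paper simply invokes \cite[Proposition 1]{Di-Pietro.Krell:18}; your argument is precisely the adaptation that citation implicitly requires, since the test space is $\Poly{l}(\Ths[T])^{3\times 3}$ rather than $\Poly{l}(T)^{3\times 3}$, and you correctly identify that the face-trace bound $\nnorm[L^2(F)^{3\times 3}]{\btau}\lesssim h_F^{-\nicefrac12}\nnorm[L^2(T)^{3\times 3}]{\btau}$ must be assembled simplex by simplex through the $\sigma\in\Fhs[F]$ and absorbed via \eqref{ineq:card.IT.F} and $h_\tau\simeq h_F\simeq h_T$. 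For item (ii), the paper starts from the integrated-by-parts representation \eqref{eq:disc:grad:a} tested with $\uline{\bI}_T^k\bv$, which produces three terms: a volume term $-\int_T(\bpi_T^k\bv-\bv)\cdot(\DIV\btau)$ controlled by an inverse inequality on $\DIV\btau$, a boundary term, and an internal-jump term $\mathfrak{T}_3$ over $\sigma\in\Fhs[T]^{\rm i}$ estimated with local continuous and discrete trace inequalities on the submesh. You instead stay with \eqref{eq:disc:grad:b}, so the jump contributions never arise: your error identity has only the volume term $\int_T\GRAD(\bpi_T^k\bv-\bv):\btau$ (handled by the $H^1$-seminorm projection estimate \eqref{eq:l2proj:error:cell} with $m=1$, in place of the paper's $L^2$-estimate combined with an inverse inequality, giving the same rate $h_T^k$) and the boundary-face terms, where your commutation step $\bpi_F^k\bv-\bpi_T^k\bv=\bpi_F^k(\bv-\bpi_T^k\bv)$ is exactly the inequality \eqref{ineq.piF.piT} the paper uses elsewhere; the final triangle-inequality split through $\bpi^l_{\Ths[T]}\GRAD\bv$ is identical in both proofs. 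What your route buys is the elimination of the most delicate term in the paper's proof ($\mathfrak{T}_3$), confining all submesh-specific work to a single trace bound on $\partial T$; what the paper's route buys is that the representation \eqref{eq:disc:grad:a} and the machinery for handling jump terms are precisely what is reused later in the convective-term consistency analysis (cf.\ the term $\mathfrak{T}_{2,2,2}$ in Lemma \ref{lem:ns:th}), where test tensors are no longer at your disposal and jumps are unavoidable.
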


\begin{proof}
  \emph{(i) Boundedness.}
  The proof is the same as that of \cite[Proposition 1]{Di-Pietro.Krell:18}.
  \medskip\\
  \emph{(ii) Consistency.}
  Let $\bv \in H^{k+1}(T)^3$. For all $T \in \Th$, using $\uline{\bI}_T^k\bv = (\bpi_T^k\bv,(\bpi_F^k\bv_{|F})_{F\in\Fh[T]})$ into \eqref{eq:disc:grad:a} for the first term and an integration by parts for the second term, we obtain, for all $\btau\in \Poly{k}(\Ths[T])^{3\times3}$,
  \begin{equation}    
    \begin{aligned}
      \label{eq:disc:grad:proof}
      \int_T (\bG_{\Ths[T]}^l \uline{\bI}_T^k\bv-\GRAD \bv): \btau
      =&
      - \int_T (\bpi_T^k\bv- \bv)\cdot (\DIV \btau)
      + \sum_{F\in\calF_T}\int_F (\bpi_F^k\bv- \bv)\cdot\btau\bn_{TF}\\
      &
      +\sum_{\sigma\in\Fhs[T]^{\rm i}}\int_\sigma (\bpi_T^k\bv- \bv)\cdot  \llbracket \btau\rrbracket_\sigma \bn_\sigma
      \eqcolon\mathfrak{T}_1 + \mathfrak{T}_2 + \mathfrak{T}_3.
    \end{aligned}
  \end{equation}
  We now proceed to bound the terms in the right-hand side.
  \smallskip
  Using Cauchy--Schwarz and discrete inverse inequalities along with the approximation properties \eqref{eq:l2proj:error:cell} of  $\bpi_T^k$ with ${(l,r,m,s)} = {(k,2,0,k+1)}$ we obtain, for the first term,
  \begin{equation}\label{eq:disc:grad:T1}
    |\mathfrak{T}_1|\lesssim h_T^k\seminorm{H^{k+1}(T)^3}{\bv}\norm{L^2(T)^{3\times 3}}{\btau}.
  \end{equation}
  \smallskip
  For the second term, we use a H\"older inequality with exponents $(2,2,\infty)$ along with $\norm{L^\infty(F)^3}{\bn_{TF}}=1$ to write
  \begin{multline}\label{eq:disc:grad:T2}
    |\mathfrak{T}_2|
    \le\sum_{F\in\Fh[T]}\norm{L^2(F)^3}{\bpi_F^k\bv- \bv}\norm{L^2(F)^{3\times 3}}{\btau}
    \\
    \lesssim h_T^{-\nicefrac12}\left(
    \sum_{F\in\Fh[T]}\norm{L^2(F)^3}{\bpi_T^k\bv- \bv}^2
    \right)^{\nicefrac12}\norm{L^2(T)^{3\times 3}}{\btau}
    \lesssim
    h_T^k\seminorm{H^{k+1}(T)^3}{\bv}\norm{L^2(T)^{3\times 3}}{\btau},
  \end{multline}
  where we have used the inequality \eqref{ineq.piF.piT} together with a discrete trace inequality in the second step and the trace approximation properties \eqref{eq:l2proj:error:faces} of $\bpi_T^k$ with $(l,r,m,s) = (k,2,0,k+1)$ to conclude.\\
 \smallskip
  Let us now consider the third term {in \eqref{eq:disc:grad:proof}}.
  Recalling the definition \eqref{def:jump.trace.op} of the jump operator, we bound each integral over $\sigma\in\Fhs[T]^{\rm i}$ as follows:
  \begin{align*}
    \left | \int_\sigma (\bpi_T^k\bv- \bv)\cdot  \llbracket \btau\rrbracket_\sigma \bn_\sigma  \right |
    &\le
    \sum _{i=1}^2 \left |      \int_\sigma (\bpi_T^k\bv- \bv)\cdot\left( \btau_{|\tau_i} \bn_{\tau_i\sigma} \right) \right |\\
    &\leq
    \sum _{i=1}^2   \norm{L^2(\sigma)^3}{\bpi_T^k\bv- \bv} \norm{L^2(\sigma)^{3\times3}}{\btau_{|\tau_i}}\\
    & 
    \lesssim 
    \sum _{i=1}^2  
    \left(  h_{\sigma}^{-1}\norm{L^2(\tau_i)^3}{\bpi_T^k\bv- \bv}   + \seminorm{H^1(\tau_i)^3}{\bpi_T^k\bv- \bv} \right)
    \norm{L^2(\tau_i)^{3\times3}}{\btau}\\
    & 
    \lesssim \left( h_T^{-1}\norm{L^2(T)^3}{\bpi_T^k\bv- \bv}  + \seminorm{H^1(T)^3}{\bpi_T^k\bv- \bv} \right) \norm{L^2(T)^{3\times3}}{\btau}\\%     
    &
    \lesssim  h_T^k|\bv|_{H^{k+1}(T)^3} \norm{L^2(T)^{3\times3}}{\btau},
  \end{align*}
  where we have started with a triangle inequality,
  used Cauchy--Schwarz and H\"older inequalities (the latter with exponents $(2,\infty)$) along with $\|\bn_{\tau_i\sigma}\|_{L^\infty(\sigma)^3}=1$ in the  second step,
  local continuous and discrete trace inequalities on the submesh for the first and second factor, respectively, in the third step,
  and the fact that $\tau_i\subset T$ for $i\in\{1,2\}$ along with the first geometric bound in \eqref{ineq:card.IT.F} and $h_\sigma^{-1}\lesssim h_T^{-1}$ (consequence of mesh regularity) in the fourth step.
  The conclusion follows using the approximation properties \eqref{eq:l2proj:error:cell} of  $\bpi_T^k$ with $(l,r,m,s)=(k,2,0,k+1)$ for the first term in parenthesis and $(l,r,m,s)=(k,2,1,k+1)$ for the second one.
  Gathering the above estimates and observing that $\card(\Fhs[T]^{\rm i})\le 4\card(\Ths[T])\lesssim {1}$ by \eqref{ineq:card.IT.F}, we obtain
  \begin{equation}\label{eq:disc:grad:T3}
    |\mathfrak{T}_3|{\lesssim} h_T^k|\bv|_{H^{k+1}(T)^3} \norm{L^2(T)^{3\times3}}{\btau}.
  \end{equation}
  \smallskip

  Setting $\btau=\bG_{\Ths[T]}^l \uline{\bI}_T^k\bv-\bpi ^l_{\Ths[T]}\GRAD \bv$ in \eqref{eq:disc:grad:proof}, using the bounds \eqref{eq:disc:grad:T1}--\eqref{eq:disc:grad:T3}, and simplifying yields
  $$
  \| \bG_{\Ths[T]}^l\uline{\bI}_T^k{\bv}  - \bpi ^l_{\Ths[T]} \nabla \bv \|_{L^2(T)^{3\times 3}}
  \lesssim 
  h_T^k|\bv|_{H^{k+1}(T)^3}.
  $$
  Therefore, using a triangle inequality and the approximation properties \eqref{eq:l2proj:error:cell}, valid as well for  $\bpi_\tau^l$, with $(l,r,m,s)=(k,2,0,k+1)$ along with $h_\tau\leq h_T$, we infer
  \[
    \| \bG_{\Ths[T]}^l\uline{\bI}_T^k{\bv}  - \nabla \bv \|_{L^2(T)^{3\times 3}}
    \leq
    \| \bG_{\Ths[T]}^l\uline{\bI}_T^k{\bv}  - \bpi ^l_{\Ths[T]} \nabla \bv \|_{L^2(T)^{3\times 3}}
    +    
    \| \bpi ^l_{\Ths[T]} \nabla \bv - \nabla \bv \|_{L^2(T)^{3\times 3}}\\
    \lesssim   h_T^k|\bv|_{H^{k+1}(T)^3}.\qedhere
    \]
\end{proof}

%------------------------------------------------------------------------------%
%------------------------------------------------------------------------------%

\section{Discrete problem}\label{sec:discrete.problem}

\subsection{Viscous term and pressure-velocity coupling}

The viscous term and the pressure-velocity coupling are the same as in the standard HHO method; see, e.g., \cite{Di-Pietro.Krell:18,Botti.Di-Pietro.ea:19}.
We briefly recall them here to make the exposition self-contained.
\smallskip

The viscous bilinear form $a_h$: $\uline{\bU}_h^k \times \uline{\bU}_h^k\rightarrow \mathbb{R}$ is such that, for all $\uline{\bw}_h, \uline{\bv}_h, \in \uline{\bU}_h^k$,
\begin{equation*}%% \label{eq:ah}
  a_h(\uline{\bw}_h,\uline{\bv}_h)\coloneq
  \sum _{T \in \calT_h}\left(
  \int_T \bG_T^k \uline{\bw}_T  : \bG_T^k \uline{\bv}_T
  + s_T(\uline{\bw}_T,\uline{\bv}_T)
  \right),
\end{equation*}
where, for any $T\in\calT_h$, $s_T: \uline{\bU}_T^k \times \uline{\bU}_T^k\rightarrow \mathbb{R}$ 
denotes a local stabilization bilinear form designed according to the principles of \cite[Assumption 2.4]{Di-Pietro.Droniou:20},
so that, in particular, there exists $C_a>0$ independent of $h$ 
(and, clearly, also of $\nu$ and $\lambda$) such that, for all $\uline{\bv}_h \in \uline{\bU}_{h}^k$,
\begin{equation}
  C_a\| \uline{\bv}_h  \|_{1,h}^2
  \le a_h(\uline{\bv}_h,\uline{\bv}_h)
  \le C_a^{-1} \| \uline{\bv}_h  \|_{1,h}^2.
  \label{eq:ns:stab.h}
\end{equation}
\smallskip

Recalling the definition  \eqref{eq:hho:div:op}  of the local divergence $D_T^k$, the global pressure-velocity coupling bilinear form $b_h:  \uline{\bU}_{h,0}^k \times \Poly{k}(\calT_h)\rightarrow \mathbb{R}$ is such that, for all $(\uline{\bv}_h,q_h) \in  \uline{\bU}_{h,0}^k \times \Poly{k}(\calT_h)$,
$$
b_h(\uline{\bv}_h,q_h):= - \sum _{T\in \calT_h} \int_T D_T^k \uline{\bv}_T\, q_T,
$$
where $q_T \coloneq q_{h|T}$. 
The properties of $b_h$ relevant for the analysis can be found in \cite[Lemma 8.12]{Di-Pietro.Droniou:20}.

\subsection{Body force}\label{subsec:div:rtn}

The discretization of the body force leverages the new divergence-preserving velocity reconstruction introduced in Section \ref{sec:discrete.setting:velocity.reconstruction}.
Specifically, we introduce the bilinear form $\ell_h: \Ldeuxd \times \uline{\bU}_h^k \rightarrow  \mathbb{R}$ such that, for any $\boldsymbol{\phi} \in \Ldeuxd$ and any $\uline{\bv}_h \in \uline{\bU}_h^k$,
\begin{equation*}%%   \label{eq:lh:form}
  \ell_h(\boldsymbol{\phi},\uline{\bv}_h) \coloneq \int_\Omega \boldsymbol{\phi}\cdot \bR_h^k\underline{\bv}_h.
\end{equation*}

\begin{lemma}[Properties of $\ell_h$]\label{lem:ell.h}
  \label{lem:ns:lh}
  The bilinear form $\ell_h$ has the following properties:
  \begin{enumerate}[(i)]
  \item \emph{Velocity invariance.} Recalling the Hodge decomposition \eqref{eq:hodge.f} of $\bef$, it holds
    \begin{equation}
      \ell_h(\bg+\lambda\nabla \psi,\uline{\bv}_h)
      =
      \ell_h(\bg,\uline{\bv}_h) + b_h(\uline{\bv}_h,\lambda\pi_h^k\psi)
      \qquad \forall \uline{\bv}_h \in \uline{\bU}_{h,0}^k.
      \label{eq:ns:lh:invariance}
    \end{equation}
  \item \emph{Consistency.} For all $\boldsymbol{\phi} \in \Ldeuxd\cap H^{k}(\calT_h)^3$,
   \begin{equation}
      \| {\cal{E}}_{\ell,h}(\boldsymbol{\phi};\cdot) \|_{1,h,*}
      \lesssim
      h^{k+1}|\boldsymbol{\phi}|_{H^{k}(\calT_h)^3},
      \label{eq:ns:lh:consistency}
    \end{equation}
    where the linear form ${\cal{E}}_{\ell,h}(\boldsymbol{\phi};\cdot):\uline{\bU}_{h}^k\rightarrow\mathbb{R}$, representing the body force consistency error, is such that        
    \begin{align}
      {\cal{E}}_{\ell,h}(\boldsymbol{\phi};\uline{\bv}_h)
      \coloneq 
      \ell_h( \boldsymbol{\phi},\uline{\bv}_h)
      - \int_\Omega \boldsymbol{\phi} \cdot \bv_h
      = \sum_{T\in \Th} \int_T \bphi \cdot  (\bR_T^k\uline{\bv}_T  - {\bv}_T).
      \label{Eh.lh}
    \end{align}
  \end{enumerate}
\end{lemma}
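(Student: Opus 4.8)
The plan is to first use linearity of $\ell_h$ in its first argument to peel off the $\bg$-contribution, reducing the claim to the single identity $\ell_h(\GRAD\psi,\uline{\bv}_h)=b_h(\uline{\bv}_h,\pi_h^k\psi)$; the factor $\lambda$ then follows from linearity of $b_h$ in its second argument. The crucial structural fact I would exploit is the $\Hdiv$-conformity of the global reconstruction: since $\uline{\bv}_h\in\uline{\bU}_{h,0}^k$ has $\bv_F=\boldsymbol0$ on every boundary face, the boundary condition \eqref{eq:darcyT:weak:bd} forces the normal trace of $\bR_h^k\uline{\bv}_h$ to vanish on $\partial\Omega$, and combined with the interface continuity noted just after the definition of $\bR_h^k$ this puts $\bR_h^k\uline{\bv}_h$ in $H_0(\DIV;\Omega)$. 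An integration by parts against $\psi\in H^1(\Omega)$ then produces no boundary term:
\[
  \ell_h(\GRAD\psi,\uline{\bv}_h)=\int_\Omega \GRAD\psi\cdot\bR_h^k\uline{\bv}_h=-\sum_{T\in\Th}\int_T \psi\,\DIV(\bR_T^k\uline{\bv}_T).
\]
Because $\DIV(\bR_T^k\uline{\bv}_T)\in\Poly{k}(\Ths[T])$ on each element, I would replace $\psi$ by its $L^2$-projection $\pi_{\Ths[T]}^k\psi$ without changing the integral, invoke \eqref{eq:darcyT:weak:a} to trade $\DIV(\bR_T^k\uline{\bv}_T)$ for $D_T^k\uline{\bv}_T$, and finally, since $D_T^k\uline{\bv}_T\in\Poly{k}(T)$, use a second projection step to reduce $\pi_{\Ths[T]}^k\psi$ back to $\pi_T^k\psi$. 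This yields $-\sum_T\int_T (D_T^k\uline{\bv}_T)\,\pi_T^k\psi=b_h(\uline{\bv}_h,\pi_h^k\psi)$, as required.

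\textbf{Part (ii): consistency.} For $k\geq1$ the central ingredient is the consistency property \eqref{ineq:rtn:consis}, which states that $\bR_T^k\uline{\bv}_T-\bv_T$ is $L^2(T)^3$-orthogonal to $\Poly{k-1}(T)^3$. Hence in each summand of \eqref{Eh.lh} I can subtract $\bpi_T^{k-1}\bphi$ from $\bphi$ at no cost:
\[
  \int_T \bphi\cdot(\bR_T^k\uline{\bv}_T-\bv_T)=\int_T (\bphi-\bpi_T^{k-1}\bphi)\cdot(\bR_T^k\uline{\bv}_T-\bv_T).
\]
Applying Cauchy--Schwarz, then the approximation estimate \eqref{eq:l2proj:error:cell} for $\bpi_T^{k-1}$ with $(l,m,r,s)=(k-1,0,2,k)$ on the first factor and the a priori bound \eqref{ineq:rtn:bound} on the second, the right-hand side is controlled by $h_T^{k}\seminorm{H^{k}(T)^3}{\bphi}\cdot h_T\norm{1,T}{\uline{\bv}_T}=h_T^{k+1}\seminorm{H^{k}(T)^3}{\bphi}\norm{1,T}{\uline{\bv}_T}$. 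Summing over $T$, a discrete Cauchy--Schwarz inequality together with $h_T\le h$ gives $\lvert\mathcal{E}_{\ell,h}(\bphi;\uline{\bv}_h)\rvert\lesssim h^{k+1}\seminorm{H^{k}(\calT_h)^3}{\bphi}\norm{1,h}{\uline{\bv}_h}$, and \eqref{eq:ns:lh:consistency} follows after dividing by $\norm{1,h}{\uline{\bv}_h}$ and taking the supremum in \eqref{eq:dual.norm}.

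\textbf{Main obstacle.} The delicate point is the case $k=0$, where \eqref{ineq:rtn:consis} is unavailable (the Koszul space $\Goly{{\rm c},-1}(T)$ is trivial, so no orthogonality is gained). I would handle it separately: with nothing to subtract, a plain Cauchy--Schwarz bound combined with \eqref{ineq:rtn:bound} already yields $\bigl|\int_T \bphi\cdot(\bR_T^0\uline{\bv}_T-\bv_T)\bigr|\lesssim\norm{\Ldeuxd[T]}{\bphi}\,h_T\norm{1,T}{\uline{\bv}_T}$, which is exactly $h_T^{0+1}\seminorm{H^{0}(T)^3}{\bphi}\norm{1,T}{\uline{\bv}_T}$ since $\seminorm{H^0(T)^3}{\cdot}=\norm{\Ldeuxd[T]}{\cdot}$; the summation closes the estimate as before. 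Thus the two regimes merge into the single claimed bound, the orthogonality in \eqref{ineq:rtn:consis} supplying the extra $h_T^{k}$ gain only when $k\geq1$. The one subtlety worth double-checking is that the distributional and elementwise divergences of $\bR_h^k\uline{\bv}_h$ coincide in Part (i), which is precisely guaranteed by its $H(\DIV;\Omega)$-conformity.
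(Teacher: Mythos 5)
Your proposal is correct and takes essentially the same route as the paper: for part (i) you reproduce the integration-by-parts argument of \cite[Section 4.3]{Castanon-Quiroz.Di-Pietro:20} that the paper invokes, resting on the fact that \eqref{eq:darcyT:weak:a} forces $(\DIV \bR_h^k\uline{\bv}_h)_{|T}=D_T^k\uline{\bv}_T$ (your projection chain through $\pi_{\Ths[T]}^k$ is a valid unpacking of this pointwise identity, since both divergences lie in $\Poly{k}(\Ths[T])$). For part (ii) you use exactly the paper's case split and ingredients: plain Cauchy--Schwarz with \eqref{ineq:rtn:bound} for $k=0$, and for $k\geq1$ the orthogonality \eqref{ineq:rtn:consis} followed by \eqref{eq:l2proj:error:cell} with $(l,m,r,s)=(k-1,0,2,k)$, the bound \eqref{ineq:rtn:bound}, a discrete Cauchy--Schwarz inequality, and the supremum in \eqref{eq:dual.norm}.
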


\begin{proof}
  \emph{(i) Velocity invariance.} The proof is the same as in  \cite[Section 4.3]{Castanon-Quiroz.Di-Pietro:20}, using the fact that $(\DIV   \bR_h^k\underline{\bv}_h)_{|T}=D_T^k\uline{\bv}_T\in\Poly{k}(T)$, which is enforced by \eqref{eq:darcyT:weak:a}.
  \medskip\\
  \emph{(ii) Consistency.} We prove the cases $k=0$ and {$k\geq 1$} separately.
  \medskip\\
  \underline{(ii.A) \emph{The case $k=0$.}}
  Taking absolute values in \eqref{Eh.lh} and using Cauchy--Schwarz inequalities along with \eqref{ineq:rtn:bound} and $h_T\le h$ for all $T\in\Th$, we can write
  $\left |  {\cal{E}}_{\ell,h}(\boldsymbol{\phi};\uline{\bv}_h) \right | \leq h  \norm{L^2(\Omega)^3}{\bphi}  \norm{1,h}{\uline{\bv}_T}$.
  Passing to the supremum over $\uline{\bv}_h \in \uline{\bU}_{h}^k$ such that $\norm{1,h}{ \uline{\bv}_h}=1$, we obtain   \eqref{eq:ns:lh:consistency}.
  \medskip\\
  \underline{(ii.B) \emph{The case $k\geq 1$.}}
  Using \eqref{ineq:rtn:consis} in \eqref{Eh.lh} and continuing with Cauchy--Schwarz inequalities, we obtain
  \begin{align*}
    \left|{\cal{E}}_{\ell,h}(\bphi;\uline{\bv}_h)\right|
    = \left|\sum_{T\in \Th} \int_T (\bphi -\bpi^{k-1} \bphi) \cdot  (\bR_T^k\uline{\bv}_T  - {\bv}_T)\right|
    \le\sum_{T\in\Th}\norm{L^2(T)^3}{\bphi -\bpi^{k-1} \bphi}\norm{L^2(T)^3}{\bR_T^k\uline{\bv}_T  - {\bv}_T }.
  \end{align*}
  Using then the approximation properties  \eqref{eq:l2proj:error:cell} of the $L^2$-projector with $(l,m,r,s)=(k-1,0,2,k)$ for the first factor and the bound  \eqref{ineq:rtn:bound} for the second, applying discrete Cauchy--Schwarz inequalities to the sums, and passing to the supremum over $\uline{\bv}_h \in \uline{\bU}_{h}^k$ such that $\norm{1,h}{ \uline{\bv}_h}=1$, \eqref{eq:ns:lh:consistency} follows.
\end{proof}
  
\subsection{Convective term}\label{sec:discrete.problem:convective.term}

To discretize the convective term, we introduce the global trilinear form $t_h:\left[\uline{\bU}_h^k\right]^3\to\mathbb{R}$ such that
\begin{subequations}\label{eq:th}
  \begin{equation}
    t_h(\uline{\bw}_h,\uline{\bv}_h,\uline{\bz}_h)
    \coloneq 
    \sum _{T \in \calT_h}   t_T(\uline{\bw}_T,\uline{\bv}_T,\uline{\bz}_T),
    \label{eq:th:glob:form}
  \end{equation}
  where, for any $T\in\calT_h$, $t_T:\left[\uline{\bU}_T^k\right]^3\rightarrow \mathbb{R}$
  is defined  as 
  \begin{equation}\label{eq:th:lcl:form}
    t_T(\uline{\bw}_T,\uline{\bv}_T,\uline{\bz}_T)
    \coloneq
    \int_T \bG_{\Ths[T]}^{2(k+1)}\uline{\bw}_T \bR_T^k\uline{\bv}_T \cdot \bR_T^k\uline{\bz}_T - \int_T  \bG_{\Ths[T]}^{2(k+1)}\uline{\bw}_T \bR_T^k\uline{\bz}_T \cdot \bR_T^k\uline{\bv}_T.
  \end{equation}
\end{subequations}

\begin{remark}[Reformulation of  $t_h$]\label{rem:th:reformulation}
  In  practice, it is not necessary to compute the piecewise gradient reconstruction operators $\bG_{\Ths[T]}^{2(k+1)}$ to evaluate $t_T$ and $t_h$.
As a matter of fact, expanding the piecewise gradient operator in \eqref{eq:th} according to its definition \eqref{eq:disc:grad:b}, we have that
  \begin{align*}  
    t_h(\uline{\bw}_h,\uline{\bv}_h,\uline{\bz}_h)=
    &\sum _{T \in \calT_h} \left [
      \int_T \nabla {\bw}_T \bR_T^k\uline{\bv}_T \cdot \bR_T^k\uline{\bz}_T
      - \int_T  \nabla {\bw}_T \bR_T^k\uline{\bz}_T \cdot \bR_T^k\uline{\bv}_T
      \right]\\
    &+\sum _{T \in \calT_h} \sum _{F \in \calF_T}
    \int_F (\bw_F- \bw_T) \cdot \bR_T^k\uline{\bz}_T(\bR_T^k\uline{\bv}_T   \cdot \bn_{TF})
    \\
    &
    -\sum _{T \in \calT_h} \sum _{F \in \calF_T}
    \int_F (\bw_F- \bw_T) \cdot \bR_T^k\uline{\bv}_T(\bR_T^k\uline{\bz}_T   \cdot \bn_{TF}).
  \end{align*}  
\end{remark}
The properties of $t_h$ relevant for the analysis are contained in the following lemma.
\begin{lemma}[Properties of $t_h$]
  \label{lem:ns:th}
  The trilinear form $t_h$ has the following properties:
  \begin{enumerate}
  \item \emph{Non-dissipativity.} For all $\uline{\bw}_h,\uline{\bv}_h \in \uline{\bU}_h^k$,
    it holds that    
    \begin{equation}  
      t_h(\uline{\bw}_h,\uline{\bv}_h,\uline{\bv}_h)=0.
      \label{ns:th:skewsym}
    \end{equation}
  \item \emph{Boundedness.} There exists a real number $C_t>0$ independent of $h$ (and, clearly, also of $\nu$ and $\lambda$) such that, for all $\uline{\bw}_h,\uline{\bv}_h,\uline{\bz}_h \in \uline{\bU}_h^k$,
    \begin{equation}
      |t_h(\uline{\bw}_h,\uline{\bv}_h,\uline{\bz}_h)|
      \le C_t
      \| \uline{\bw}_h \|_{1,h}  \|\uline{\bv}_h\|_{1,h}\| \uline{\bz}_h\|_{1,h}.
      \label{ns:th:boundedness}
    \end{equation}
  \item \emph{Consistency.} It holds, for all $\bw \in \bU \cap W^{k+1,4}(\calT_h)^3$ such that $\DIV \bw = 0$ a.e. in $\Omega$,
    \begin{equation}
      \| {\cal{E}}_{t,h}(\bw;\cdot) \|_{1,h,*}
      \lesssim
      h^{k+1} | \bw |_{W^{k+1,4}(\calT_h)^3} \|\bw\|_{W^{1,4}(\Omega)^3},
      \label{ns:th:consistency}
    \end{equation}
    where the linear form ${\cal{E}}_{t,h}(\bw;\cdot):\uline{\bU}_h^k\rightarrow\mathbb{R}$ representing the consistency error is such that, for all $\uline{\bz}_h \in \uline{\bU}_h^k$,
    \[
      {\cal{E}}_{t,h}(\bw;\uline{\bz}_h)
      \coloneq
      \ell_h(  (\ROT \bw) \times \bw ,\uline{\bz}_h)
      -
      t_h(\uline{\bI}_h^k\bw,\uline{\bI}_h^k\bw,\uline{\bz}_h).
    \]
  \end{enumerate}
\end{lemma}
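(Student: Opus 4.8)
The plan is to establish the three items in turn; (i) and (ii) are short, while (iii) is the technical core.

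\emph{Non-dissipativity (i).} The local form \eqref{eq:th:lcl:form} is antisymmetric in its last two arguments by construction. Setting $\uline{\bz}_T=\uline{\bv}_T$ makes the two integrals on its right-hand side coincide, whence $t_T(\uline{\bw}_T,\uline{\bv}_T,\uline{\bv}_T)=0$ for every $T\in\calT_h$; summing over the mesh yields \eqref{ns:th:skewsym}. \emph{Boundedness (ii).} I would bound each of the two terms in \eqref{eq:th:lcl:form} by a local Hölder inequality with exponents $(2,4,4)$,
\[
|t_T(\uline{\bw}_T,\uline{\bv}_T,\uline{\bz}_T)|\le 2\,\|\bG_{\Ths[T]}^{2(k+1)}\uline{\bw}_T\|_{L^2(T)^{3\times3}}\,\|\bR_T^k\uline{\bv}_T\|_{L^4(T)^3}\,\|\bR_T^k\uline{\bz}_T\|_{L^4(T)^3},
\]
then use the boundedness \eqref{eq:disc:grad:bound} of $\bG_{\Ths[T]}^{2(k+1)}$ on the first factor, sum over $T$ with a discrete Hölder inequality of the same exponents, and close with the Sobolev bound \eqref{eq:rtn:Lp.bound} for $r=4$ applied to $\bR_h^k\uline{\bv}_h$ and $\bR_h^k\uline{\bz}_h$. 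This gives \eqref{ns:th:boundedness}.

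\emph{Consistency (iii).} Since $\ell_h(\bphi,\cdot)$ and $t_h$ act element-wise, I write ${\cal E}_{t,h}(\bw;\uline{\bz}_h)=\sum_{T\in\calT_h}{\cal E}_T$ and, on each $T$, rewrite the continuous contribution through the pointwise algebraic identity
\[
((\ROT\bw)\times\bw)\cdot\bphi=((\nabla\bw)\bw)\cdot\bphi-((\nabla\bw)\bphi)\cdot\bw,
\]
evaluated at $\bphi=\bR_T^k\uline{\bz}_T$. Abbreviating $\widehat{\bw}\coloneq\bR_T^k(\uline{\bI}_T^k\bw)$ and $\widehat{\bz}\coloneq\bR_T^k\uline{\bz}_T$, and noting that $\DIV\widehat{\bw}=0$ (by \eqref{eq:darcyT:weak:a}, the commutation \eqref{eq:DT.commuting}, and $\DIV\bw=0$), elementary insertions of $\pm\widehat{\bw}$ and $\pm\bG_{\Ths[T]}^{2(k+1)}\uline{\bI}_T^k\bw$ split ${\cal E}_T=\mathcal{A}_T+\mathcal{B}_T$ with
\[
\mathcal{A}_T\coloneq\int_T(\nabla\bw)(\bw-\widehat{\bw})\cdot\widehat{\bz}-\int_T(\nabla\bw)\widehat{\bz}\cdot(\bw-\widehat{\bw}),\qquad
\mathcal{B}_T\coloneq\int_T\big(\nabla\bw-\bG_{\Ths[T]}^{2(k+1)}\uline{\bI}_T^k\bw\big):(\widehat{\bz}\otimes\widehat{\bw}-\widehat{\bw}\otimes\widehat{\bz}).
\]

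The term $\mathcal{A}_T$ is the benign one: a Hölder inequality with exponents $(4,4,2)$, the bound $\|\bw-\widehat{\bw}\|_{L^4(T)^3}\lesssim h_T^{k+1}|\bw|_{W^{k+1,4}(T)^3}$ (the $L^4$ analogue of \eqref{ineq:rtn:approx}, obtained by combining it with the discrete Lebesgue embedding \eqref{eq:rev:emb:T} and the $L^4$ estimate \eqref{eq:l2proj:error:cell} for $\bpi_T^k$), together with $\|\nabla\bw\|_{L^4(T)^{3\times3}}\le\|\bw\|_{W^{1,4}(T)^3}$ and the bound \eqref{eq:rtn:Lp.bound} for $\widehat{\bz}$, furnish after a discrete Hölder inequality the target contribution $h^{k+1}|\bw|_{W^{k+1,4}(\calT_h)^3}\|\bw\|_{W^{1,4}(\Omega)^3}\|\uline{\bz}_h\|_{1,h}$. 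The crucial structural point for $\mathcal{B}_T$ is that, since $\widehat{\bw}$ and $\widehat{\bz}$ are piecewise in $\RTN{k}\subset\Poly{k+1}$ on $\Ths[T]$, the antisymmetric tensor $\widehat{\bz}\otimes\widehat{\bw}-\widehat{\bw}\otimes\widehat{\bz}$ belongs to $\Poly{2(k+1)}(\Ths[T])^{3\times3}$ --- precisely the reason the enriched degree $2(k+1)$ is used --- so it is an admissible test tensor in the definition \eqref{eq:disc:grad:b} of $\bG_{\Ths[T]}^{2(k+1)}$, which turns $\mathcal{B}_T$ into volume and face terms carrying only the projection errors $\bw-\bpi_T^k\bw$ and $\bw-\bpi_F^k\bw$.

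The main obstacle is precisely $\mathcal{B}_T$: the gradient reconstruction is only $O(h^k)$-consistent \eqref{eq:disc:grad:consistency}, so a direct estimate of $\mathcal{B}_T$ loses one power of $h$. Recovering the optimal rate $h^{k+1}$ requires exploiting, simultaneously, the antisymmetry of $\widehat{\bz}\otimes\widehat{\bw}-\widehat{\bw}\otimes\widehat{\bz}$ (only the skew part of $\nabla\bw-\bG_{\Ths[T]}^{2(k+1)}\uline{\bI}_T^k\bw$ contributes), the exact relation $\DIV\widehat{\bw}=0$, and the $L^2$-orthogonality of the projectors, so as to replace $\widehat{\bw}$ by $\bw$ up to $O(h^{k+1})$ remainders and cancel the leading $O(h^k)$ part; this delicate cancellation is carried out along the lines of the convective-term consistency analysis of \cite{Castanon-Quiroz.Di-Pietro:20}. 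One then concludes by summing over $T\in\calT_h$ and passing to the dual norm \eqref{eq:dual.norm}, which gives \eqref{ns:th:consistency}.
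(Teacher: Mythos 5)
Your items (i) and (ii) coincide with the paper's own proof (the paper likewise treats non-dissipativity as immediate from \eqref{eq:th} and proves boundedness via H\"older with exponents $(2,4,4)$, \eqref{eq:disc:grad:bound}, and \eqref{eq:rtn:Lp.bound} with $r=4$). Your decomposition in (iii) is algebraically correct and is a legitimate variant of the paper's: writing $B(\boldsymbol{M};\bu,\bv)\coloneq\int_T\boldsymbol{M}\bu\cdot\bv-\int_T\boldsymbol{M}\bv\cdot\bu$, you split ${\cal E}_T=B(\nabla\bw;\bw-\widehat{\bw},\widehat{\bz})+B(\nabla\bw-\bG_{\Ths[T]}^{2(k+1)}\uline{\bI}_T^k\bw;\widehat{\bw},\widehat{\bz})$, whereas the paper pairs the gradient-consistency error with $(\bw,\widehat{\bz})$ (its terms $\mathfrak{T}_1,\mathfrak{T}_2$, with the insertion of $\pm\bpi_T^0\bw$ so that the test tensor $\bR_T^k\uline{\bz}_T\otimes\bpi_T^0\bw$ is polynomial) and pairs the reconstruction error $\bw-\widehat{\bw}$ with $\bG_{\Ths[T]}^{2(k+1)}\uline{\bI}_T^k\bw$ (its terms $\mathfrak{T}_3,\mathfrak{T}_4$). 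Your $\mathcal{A}_T$ is handled with exactly the ingredients the paper uses for $\mathfrak{T}_3,\mathfrak{T}_4$: in particular the bound $\|\bw-\widehat{\bw}\|_{L^4(T)^3}\lesssim h_T^{k+1}|\bw|_{W^{k+1,4}(T)^3}$ is established there by combining \eqref{ineq:rtn:approx}, \eqref{eq:rev:emb:T}, and \eqref{eq:l2proj:error:cell}, so that part is sound.

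The genuine gap is $\mathcal{B}_T$. Your second paragraph identifies the right mechanism: since $\btau\coloneq\widehat{\bz}\otimes\widehat{\bw}-\widehat{\bw}\otimes\widehat{\bz}\in\Poly{2(k+1)}(\Ths[T])^{3\times3}$ is admissible in \eqref{eq:disc:grad}, $\mathcal{B}_T$ reduces to a volume term $-\int_T(\bw-\bpi_T^k\bw)\cdot\DIV\btau$, an interior submesh jump term $\sum_{\sigma\in\Fhs[T]^{\rm i}}\int_\sigma(\bw-\bpi_T^k\bw)\cdot\llbracket\btau\rrbracket_\sigma\bn_\sigma$, and a boundary-face term carrying $\bw-\bpi_F^k\bw$. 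But you never estimate these three terms, and they are the technical heart of the lemma in this polytopal setting: one needs local trace and inverse inequalities on the submesh, the bound \eqref{ineq:rtn:bound} to control $\GRAD\bR_T^k\uline{\bz}_T$ and, after inserting $\pm\bz_T$ (and, in your variant, also $\pm\bpi_T^k\bw$ for the $\widehat{\bw}$-factor, since both factors of your $\btau$ now jump across $\sigma$), the jumps $\llbracket\btau\rrbracket_\sigma$, together with the cardinality bound \eqref{ineq:card.IT.F} — this is precisely what the paper's estimates of the terms $\mathfrak{T}_{2,2,1}$, $\mathfrak{T}_{2,2,2}$, $\mathfrak{T}_{2,2,3}$ carry out. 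Deferring to ``along the lines of \cite{Castanon-Quiroz.Di-Pietro:20}'' cannot close this: on matching simplicial meshes there is no subtriangulation, so the jump term over $\Fhs[T]^{\rm i}$ — the new difficulty of this paper — has no counterpart in that reference. Moreover, your closing paragraph misdescribes the argument: once the rewriting via \eqref{eq:disc:grad:a} is done, each of the two tensors $\widehat{\bz}\otimes\widehat{\bw}$ and $\widehat{\bw}\otimes\widehat{\bz}$ separately yields an $O(h^{k+1})$ contribution; no cancellation of a ``leading $O(h^k)$ part'' by antisymmetry occurs or is needed (antisymmetry serves only non-dissipativity), and $\DIV\widehat{\bw}=0$ merely simplifies $\DIV\btau=(\GRAD\widehat{\bz})\widehat{\bw}-(\GRAD\widehat{\bw})\widehat{\bz}-\widehat{\bw}\,D_T^k\uline{\bz}_T$. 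As written, the optimal rate rests on an unexecuted and incorrectly motivated step.
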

\begin{proof}
  (i) \emph{Non-dissipativity.} Immediate consequence of the definition \eqref{eq:th} of $t_h$.
  \medskip\\
  (ii) \emph{Boundedness.} The proof is similar to that of  \cite[Lemma 7.ii]{Castanon-Quiroz.Di-Pietro:20}  using the H\"older inequalities with exponent (2,4,4), the bound $\eqref{eq:disc:grad:bound}$, and the discrete Sobolev embedding \eqref{eq:rtn:Lp.bound} with $r=4$.
  The details are omitted for the sake of conciseness.
  \medskip\\
  (iii) \emph{Consistency.}  Let  $\uline{\hat{\bw}}_h\coloneq \uline{\bI}_h^k\bw$.
        Proceeding as in  \cite[Lemma 7.iii]{Castanon-Quiroz.Di-Pietro:20}, we obtain the following decomposition:
  \begin{equation}\label{eq:curl:all:T}
    \begin{aligned}
      {\cal{E}}_{t,h}(\bw;\uline{\bz}_h)
      &=
      \underbrace{\sum _{T\in \calT_h}\int _{T}(\bG_{\Ths[T]}^{2(k+1)}\uline{\hat{\bw}}_T -\nabla \bw)  \bR_T^k\uline{\bz}_T \cdot \bw}_{\mathfrak{T}_1}
      + \underbrace{\sum _{T\in \calT_h}\int _{T}(\nabla \bw - \bG_{\Ths[T]}^{2(k+1)}\uline{\hat{\bw}}_T  ) \bw \cdot \bR_T^k\uline{\bz}_T}_{\mathfrak{T}_2}
      \\
      &\quad
      + \underbrace{\sum _{T\in \calT_h}\int _{T}\bG_{\Ths[T]}^{2(k+1)}\uline{\hat{\bw}}_T  (\bw- \bR_T^k\uline{\hat{\bw}}_T) \cdot \bR_T^k\uline{\bz}_T}_{\mathfrak{T}_3}
      + \underbrace{\sum _{T\in \calT_h}\int _{T} \bG_{\Ths[T]}^{2(k+1)}\uline{\hat{\bw}}_T \bR_T^k\uline{\bz}_T \cdot ( \bR_T^k\uline{\hat{\bw}}_T - \bw).}_{\mathfrak{T}_4}
    \end{aligned}
  \end{equation}
  We next proceed to estimate the terms $\mathfrak{T}_1, \cdots, \mathfrak{T}_4$.
  \medskip\\
  \uline{(iii.A) \emph{Estimate of $\mathfrak{T}_1$.}}
  Following  similar steps as in \cite[Lemma 7.iii.A]{Castanon-Quiroz.Di-Pietro:20} using the  approximation properties
  \eqref{eq:disc:grad:consistency} of  $\bG_{\Ths[T]}^{2(k+1)}$  and its  definition \eqref{eq:disc:grad}, we get that
  \begin{equation}
    |\mathfrak{T}_1| 
    {\lesssim}
    h^{k+1}|\bw |_{H^{k+1}({\cal{T}}_h)^3} \| \bw  \|_{W^{1,4}(\Omega)^3} \| \uline{\bz}_h \|_{1,h}.
    \label{eq:curl:T1:bound}                 
  \end{equation}
  \medskip
  \uline{(iii.B) \emph{Estimate of $\mathfrak{T}_2$.}}
  For the term $\mathfrak{T}_2$ in  \eqref{eq:curl:all:T}, inserting $\pm\bpi_T^0\bw$ into the second factor, we get
  \begin{equation}
    \begin{aligned}
      \mathfrak{T}_2
      &=
      \sum _{T\in \calT_h} \int _{T}
      (\nabla \bw - \bG_{\Ths[T]}^{2(k+1)}\uline{\hat{\bw}}_T ) (\bw -\bpi_T^0\bw) \cdot  \bR_T^k\uline{\bz}_T
      + \sum _{T\in \calT_h} \int _{T}
      (\nabla \bw - \bG_{\Ths[T]}^{2(k+1)}\uline{\hat{\bw}}_T )  \bpi_T^0\bw  \cdot \bR_T^k\uline{\bz}_T
      \\
      &\eqcolon \mathfrak{T}_{2,1} + \mathfrak{T}_{2,2}.
    \end{aligned}
    \label{eq:curl:T2}                 
  \end{equation}  
  We bound  $\mathfrak{T}_{2,1}$ using  H\"older inequalities with exponents $(2,4,4)$, then
  the approximation properties \eqref{eq:disc:grad:consistency} of $\bG_{\Ths[T]}^{2(k+1)}$ 
  and \eqref{eq:l2proj:error:cell} of $\bpi_T^0$
  with $(l,m,r,s)=(0,0,4,1)$,
  and the bound \eqref{eq:rtn:Lp.bound} with $r=4$:
  \begin{equation}
    |\mathfrak{T}_{2,1}| 
    \lesssim h^{k+1}|\bw |_{H^{k+1}(\calT_h)^3}   | \bw  |_{W^{1,4}(\Omega)^3} \| \uline{\bz}_T \|_{1,h}.
    \label{eq:curl:T21}
  \end{equation}
  To estimate $\mathfrak{T}_{2,2}$ in  \eqref{eq:curl:T2}, we integrate by parts the term involving $\nabla\bw$ and we use, for each element $T \in \calT_h$, the definition  \eqref{eq:disc:grad:a} of $\bG_{\Ths[T]}^{2(k+1)}$ with $(\uline{\bv}_T, \btau) = (\uline{\hat{\bw}}_T, \bR_T^k\uline{\bz}_T \otimes \bpi_T^0\bw)$ (notice that $ \bR_T^k\uline{\bz}_T \otimes \bpi_T^0\bw \in \Poly{k+1}(\Ths[T])^{3\times 3}\subset \Poly{2(k+1)}(\Ths[T])^{3\times 3}$) to write
  \begin{equation}\label{eq:curl:T22}
    \begin{aligned}
      \mathfrak{T}_{2,2}
      &=
      -\sum _{T\in \calT_h}
      \sum _{\tau \in \Ths[T]}
      \int _{\tau} (\bw - {\bpi}_T^k\bw ) \cdot \DIV (\bR_T^k\uline{\bz}_T \otimes \bpi_T^0\bw)
      \\
      &\quad
      +
      \sum _{T \in \Th}
      \sum _{\tau \in \Ths[T]}
      \sum_{\sigma\in\Fhs[T]^{\rm i}}
      \int_\sigma (\bw - {\bpi}_T^k\bw ) \cdot  \llbracket \bR_T^k\uline{\bz}_T \otimes \bpi_T^0\bw\rrbracket_\sigma \bn_\sigma     
      \\
      &\quad
      +\sum _{T\in \calT_h}\sum _{F\in \calF_T}
      \int _{F} ( \bw - {\bpi}_F^k\bw) \cdot  (\bR_T^k\uline{\bz}_T \otimes  \bpi_T^0\bw )\bn_{TF},
      \\
      &\eqcolon
      \mathfrak{T}_{2,2,1} + \mathfrak{T}_{2,2,2} + \mathfrak{T}_{2,2,3}.
    \end{aligned}
  \end{equation}
  For $\mathfrak{T}_{2,2,1}$, we first observe that $\DIV (  \bR_T^k\uline{\bz}_T \otimes \bpi_T^0\bw) = \GRAD\bR_T^k\uline{\bz}_T\bpi_T^0\bw + \cancel{\bR_T^k\uline{\bz}_T(\DIV\bpi_T^0\bw)}\in\Poly{k}(\Ths[T])^3$.
  Hence, using H\"older inequalities with exponents $(4,2,4)$, we infer that
  \begin{align}
    |\mathfrak{T}_{2,2,1}|
    &\leq
    \sum _{T\in \calT_h}
    \sum _{\tau \in \Ths[T]}
    \norm{L^4(\tau)^3}{\bw - {\bpi}_T^k\bw}
    \norm{L^2(\tau)^{3\times3}}{\GRAD\bR_T^k\uline{\bz}_T}
    \norm{L^4(\tau)^3}{\bpi_T^0\bw}\notag\\
    &\leq
    \sum _{T\in \calT_h}
    \norm{L^4(T)^3}{\bw - {\bpi}_T^k\bw}
    \norm{L^4(T)^3}{\bpi_T^0\bw}
    \sum _{\tau \in \Ths[T]}
    \norm{L^2(\tau)^{3\times3}}{\GRAD\bR_T^k\uline{\bz}_T}\notag\\
    &\lesssim
    \sum _{T\in \calT_h}
    h_T^{k+1}\seminorm{W^{k+1,4}(T)^3}{\bw}
    \norm{W^{1,4}(T)^3}{\bw}
    \sum _{\tau \in \Ths[T]}
    \norm{L^2(\tau)^{3\times3}}{\GRAD\bR_T^k\uline{\bz}_T},
    \label{ineq:I.2.2.1}
  \end{align}
  where, in the second step, we have used the fact that $\tau\subset T$ for all $\tau \in \Ths[T]$,
  while, in the third step, we have used the approximation properties \eqref{eq:l2proj:error:cell} of the $L^2$-orthogonal projector with $(l,m,r,s)=(k,0,4,k+1)$ for the first factor and its boundedness for the second factor.
  To bound $\norm{L^2(\tau)^{3\times3}}{\GRAD\bR_T^k\uline{\bz}_T}$, we first observe that $(\bR_T^k\uline{\bz}_T) _{|\tau}$ is
  in the space $\Poly{k+1}(\tau)^3$, and it holds that
  \begin{align*}
  \norm{L^2(\tau)^{3\times3}}{\GRAD\bR_T^k\uline{\bz}_T}
  &\le
  \norm{L^2(\tau)^{3}}{\GRAD(\bR_T^k\uline{\bz}_T-\bz_T)}
  + \norm{L^2(\tau)^{3}}{\GRAD\bz_T}
  \\
  &\lesssim
  h_\tau^{-1}\norm{L^2(\tau)^{3}}{\bR_T^k\uline{\bz}_T-\bz_T}
  + \norm{L^2(\tau)^{3}}{\GRAD\bz_T}
  \\
  &\lesssim
  h_\tau^{-1}\norm{L^2(T)^{3}}{\bR_T^k\uline{\bz}_T-\bz_T}
  + \norm{L^2(T)^{3}}{\GRAD\bz_T}
  \\
  &\lesssim
  h_\tau^{-1}h_T\seminorm{1,\partial T}{\uline{\bz}_T}
  + \norm{L^2(T)^{3}}{\GRAD\bz_T}
  \lesssim
  \norm{1,T}{\uline{\bz}_T},
  \end{align*}
  where  we have started with a triangle inequality after inserting $\pm\GRAD\bz_T$,
  used a local discrete inverse inequality on $\tau$ in the second step,
  the fact that $\tau\subset T$ for all $\tau \in \Ths[T]$ in the third step,
  the bound \eqref{ineq:rtn:bound} in the fourth step,
  and the inequality $h_\tau^{-1}h_T\lesssim  1 $ valid for regular mesh sequences (see \cite[Eq. (1.4)]{Di-Pietro.Droniou:20}),
  along with the definition \eqref{eq:norm.1T} of the $\norm{1,T}{{\cdot}}$-norm to conclude.
  Plugging this last inequality into \eqref{ineq:I.2.2.1} and using the geometric bound \eqref{ineq:card.IT.F} on $\Ths[T]$ along with a discrete H\"older inequality, we arrive at
  \begin{equation}\label{eq:curl:T221}
    |\mathfrak{T}_{2,2,1}|
    \lesssim
    h^{k+1}|\bw |_{W^{k+1,4}(\calT_h)^3}   \| \bw  \|_{W^{1,4}(\Omega)^3} \| \uline{\bz}_h \|_{1,h}.    
  \end{equation}

  To estimate $\mathfrak{T}_{2,2,2}$ in \eqref{eq:curl:T22}, we insert $\pm \bz_T$ into the first factor inside the jump operator to write
  \begin{align*}
    \mathfrak{T}_{2,2,2}
    =&
    \sum _{T \in \Th}
    \sum _{\tau \in \Ths[T]}\sum_{\sigma\in\Fhs[T]^{\rm i}}\int_\sigma
    (\bw - {\bpi}_T^k\bw ) \cdot  \llbracket (\bR_T^k\uline{\bz}_T - \bz_T)\otimes \bpi_T^0\bw\rrbracket_\sigma \bn_\sigma\\
    &-
    \sum _{T \in \Th}\sum _{\tau \in \Ths[T]}\sum_{\sigma\in\Fhs[T]^{\rm i}}\int_\sigma
    \cancel{(\bw - {\bpi}_T^k\bw ) \cdot  \llbracket \bz_T\otimes \bpi_T^0\bw\rrbracket_\sigma \bn_\sigma},
  \end{align*}
  where the second addend cancels since $\bz_T\otimes \bpi_T^0{\bw}$ is continuous across the interior faces of $\Ths[T]$.
  Setting $ \Fhs^{\rm i}\coloneq \{\sigma \in \Fhs[T]^{\rm i}: T \in \Th\}$ and exchanging the order of the sums, we can now express $\mathfrak{T}_{2,2,2}$ in the following equivalent form:
  \begin{align}
    \mathfrak{T}_{2,2,2}
    =&
    \sum_{\sigma\in\Fhs^{\rm i}}
    \sum_{i=1}^2
    \int_\sigma
    (\bw - {\bpi}_{T_\sigma}^k\bw ) \cdot  \left[(\bR_{T_\sigma}^k\uline{\bz}_{T_\sigma} - \bz_{T_\sigma})\otimes \bpi_{T_\sigma}^0\bw\right]_{|\tau_i}\bn_{\tau_i\sigma},
    \label{eq:curl:T222}
  \end{align}
  where, for a given $\sigma\in\Fhs^{\rm i}$,
  $T_\sigma\in\Th$ is the element in which $\sigma$ is contained, while
  $\tau_1$ and $\tau_2$ denote the simplices in $\Ths[T_\sigma]$ sharing $\sigma$.
  To bound the right-hand side of the above expression, we apply H\"older inequalities with exponents $(4,2,4,\infty)$ along with $\|\bn_{\tau_i\sigma}\|_{L^\infty(\sigma)^3}=1$ to write
  \begin{equation}\label{eq:curl:T222:L1}
    \begin{aligned}
      |\mathfrak{T}_{2,2,2}|
      &\leq
      \sum_{\sigma\in\Fhs^{\rm i}}
      \sum_{i=1}^2
      \norm{L^4(\sigma)^3}{\bw - {\bpi}_{T_\sigma}^k\bw } \norm{L^2(\sigma)^3}{(\bR_{T_\sigma}^k\uline{\bz}_{T_\sigma})_{|\tau_i}  - \bz_{{T_\sigma}}} \norm{L^4(\sigma)^3}{\bpi_{T_\sigma}^0\bw}
      \\
      &\lesssim
      \sum_{\sigma\in\Fhs^{\rm i}}
      \sum_{i=1}^2
      \norm{L^4(\sigma)^3}{\bw - {\bpi}_{T_\sigma}^k\bw }h_{\tau_i}^{-\frac{1}{2}} \norm{L^2(\tau_i)^3}{ \bR_{T_\sigma}^k\uline{\bz}_{T_\sigma} - \bz_{T_\sigma}} h_{\tau_i}^{-\frac{1}{4}}\norm{L^{4}(\tau_i)^3}{\bpi_{T_\sigma}^0\bw}
      \\
      &\lesssim
      \sum_{\sigma\in\Fhs^{\rm i}}
      \left( h_{T_\sigma}^{\frac{1}{4}} \norm{L^4(\sigma)^3}{\bw - {\bpi}_{T_\sigma}^k\bw }\right) \norm{1,T_\sigma}{\uline{\bz}_{T_\sigma}} \norm{W^{1,4}(T_\sigma)^3}{\bw},    
    \end{aligned}
  \end{equation}
  where, in the second step, we have used local trace inequalities on the submesh for the second and third factors while, in the third step, we have used
  $\tau_i\subset T_\sigma$ along with \eqref{ineq:rtn:bound}, \eqref{eq:norm.1T}, and $h_{\tau_i}^{-1}h_{T_\sigma}\lesssim  1$ (consequence of mesh regularity) for the second factor
  while, for the third factor, we have  used again $\tau_i\subset T_\sigma$ along with the boundedness of the $L^2$-orthogonal projector.
  Using trace inequalities on the submesh along with the approximation properties of the $L^2$-orthogonal projector, we infer $h_{T_\sigma}^{\frac{1}{4}} \norm{L^4(\sigma)^3}{\bw - {\bpi}_{T_\sigma}^k \bw }\lesssim h_{T_\sigma}^{k+1}\seminorm{W^{k+1,4}(T_\sigma)^3}{\bw}$ which, plugged into \eqref{eq:curl:T222:L1} and combined with the geometric bound \eqref{ineq:card.IT.F}, gives
  \begin{align}
    \label{eq:curl:T222.bound}
    |\mathfrak{T}_{2,2,2}|\lesssim
    h^{k+1}|\bw |_{W^{k+1,4}(\calT_h)^3} \| \bw  \|_{W^{1,4}(\Omega)^3} \| \uline{\bz}_h \|_{1,h}.
  \end{align}
  
  To bound the  term $\mathfrak{T}_{2,2,3}$ in \eqref{eq:curl:T22}, we first insert $\pm {\bz}_T$ into its second factor to write
  \begin{align*}
    \mathfrak{T}_{2,2,3}
    =&
    \sum _{T\in \calT_h}\sum _{F \in \calF_T}
    \int _{F} ( \bw - {\bpi}_F^k\bw) \cdot\left[
      (\bR_T^k\uline{\bz}_T -{\bz}_T) \otimes  \bpi_T^0\bw
      \right]\bn_{TF}\\
    &+
    \sum _{T\in \calT_h}\sum _{F \in \calF_T}
    \int _{F}\cancel{ ( \bw - {\bpi}_F^k\bw) \cdot ( \bz_T \otimes  \bpi_T^0\bw )\bn_{TF}},
  \end{align*}
  where the second addend cancels by the definition \eqref{eq:l2proj:def} of the $L^2$-orthogonal projector $\bpi_F^k$ since $(\bz_T\otimes\bpi_T^0)_{|F}\bn_{TF}\in \Poly{k}(F)^3$.
  Now, we rewrite the equation above as 
  \begin{align*}
    \mathfrak{T}_{2,2,3}
    =&
    \sum _{T\in \calT_h}\sum _{F \in \calF_T}\sum _{\sigma \in \Fhs[F]}
    \int _{\sigma} ( \bw - {\bpi}_F^k\bw) \cdot\left[
      (\bR_T^k\uline{\bz}_T -{\bz}_T) \otimes  \bpi_T^0\bw
      \right]\bn_{TF},
  \end{align*}
  thus, using a similar procedure as for \eqref{eq:curl:T222}--\eqref{eq:curl:T222:L1}, but for $\sigma \in \Fhs[F]$ and $\tau_i=\tau_\sigma$  where $\tau_\sigma \in\Ths[T]$ is the simplicial subelement containing $\sigma$, we infer that
  \[
  |\mathfrak{T}_{2,2,3}|
  \lesssim
  \sum _{T\in \calT_h}\sum _{F \in \calF_T}\sum _{\sigma \in \Fhs[F]}
  \left( h_T^{\frac{1}{4}} \norm{L^4(\sigma)^3}{\bw - {\bpi}_F^k\bw }\right) \norm{1,T}{\uline{\bz}_T} \norm{W^{1,4}(T)^3}{\bw},
  \]
  in addition, using  the fact that
  \begin{align*}
    \| \bw - {\bpi}_F^k\bw \|_{L^4(\sigma)^3}
    &\leq  \| \bw - {\bpi}_F^k\bw \|_{L^4(F)^3}\\
    &\leq \| {\bpi}_T^k\bw - {\bpi}_F^k\bw \|_{L^4(F)^3}  +  \| \bw - {\bpi}_T^k\bw \|_{L^4(F)^3}\\
    &\leq \| {\bpi}_F^k({\bpi}_T^k\bw - \bw) \|_{L^4(F)^3}  +  \| \bw - {\bpi}_T^k\bw \|_{L^4(F)^3}
    \lesssim \| \bw  - {\bpi}_T^k\bw \|_{L^4(F)^3},
  \end{align*}
  the approximation properties \eqref{eq:l2proj:error:faces} of the $L^2$-orthogonal projector with $(l,m,r,s)=(k,0,4,k+1)$, and
  the bound \eqref{ineq:card.IT.F}, we obtain
  \begin{equation}\label{eq:curl:T223.bound}
    |\mathfrak{T}_{2,2,3}|\lesssim h^{k+1} |\bw |_{W^{k+1,4}(\calT_h)^3}   \| \bw  \|_{W^{1,4}(\Omega)^3} \| \uline{\bz}_h \|_{1,h}.
  \end{equation}
  
  Plugging the estimates  \eqref{eq:curl:T221}, \eqref{eq:curl:T222.bound}, and  \eqref{eq:curl:T223.bound} into \eqref{eq:curl:T22},
%%   we conclude that
%%   \[
%%   |\mathfrak{T}_{2,2}|\lesssim h^{k+1}\seminorm{W^{k+1,4}(\calT_h)^3}{\bw} \norm{W^{1,4}(\Omega)^3}{\bw} \norm{1,h}{\uline{\bz}_h},
%%   \]
%%   which, combined
  and, combining the resulting estimate  with \eqref{eq:curl:T21}, we finally obtain
  \begin{equation}
    \label{eq:curl:T2:bound}
    |\mathfrak{T}_{2}|\lesssim h^{k+1}|\bw |_{W^{k+1,4}(\calT_h)^3}   \| \bw  \|_{W^{1,4}(\Omega)^3} \| \uline{\bz}_h \|_{1,h}.
  \end{equation}
  \\
  \underline{(iii.C) \emph{Estimate of $\mathfrak{T}_3$ and $\mathfrak{T}_4$.}}
  To bound $\mathfrak{T}_3$, we follow the same steps as in \cite[Lemma 7.iii.C]{Castanon-Quiroz.Di-Pietro:20}
  along with   the boundedness \eqref{eq:disc:grad:bound} of  $\bG_{\Ths[T]}^{2(k+1)}$
  to obtain
  $$ %% \begin{equation}\label{eq:curl:T3:bound:p}
  |\mathfrak{T}_3|
  \lesssim  
  |\bw|_{H^1(\Omega)^3}\left(
  \sum_{T\in\calT_h}\| \bw- \bR_T^k\uline{\hat{\bw}}_T\|_{L^4(T)^3}^4
  \right)^{\frac14}
  \| \uline{\bz}_h \|_{1,h}.
  $$ %% \end{equation}
  To estimate each addend in the second factor, we first insert $\pm \bpi_T^k\bw$  and then use a triangle inequality to write 
  \begin{equation}\label{eq:IRTN:l4:ebound1}
    \begin{aligned}
      \| \bw- \bR_T^k\uline{\hat{\bw}}_T\|_{L^4(T)^3}
      &\leq
      \| \bw- \bpi_T^k\bw \|_{L^4(T)^3}
      + \| \bpi_T^k\bw - \bR_T^k\uline{\hat{\bw}}_T \|_{L^4(T)^3}
      \\
      &\lesssim h_T^{k+1}|\bw |_{W^{k+1,4}(T)^3}
      + \| \bpi_T^k\bw - \bR_T^k\uline{\hat{\bw}}_T \|_{L^4(T)^3},
    \end{aligned}
  \end{equation}
  where we have used the approximation properties  \eqref{eq:l2proj:error:cell} of $\bpi_T^k$  with $(l,m,r,s)=(k,0,4,k+1)$ to conclude.
  To estimate the second term in the right-hand side of \eqref{eq:IRTN:l4:ebound1}, we proceed as follows:
  \begin{align*}  
    \| \bpi_T^k\bw - \bR_T^k\uline{\hat{\bw}}_T \|_{L^4(T)^3}^4
    &=  \sum_{\tau\in\Ths[T]}  \| \bpi_T^k\bw - \bR_T^k\uline{\hat{\bw}}_T \|_{L^4(\tau)^3}^4
    \\
    &\lesssim
    \sum_{\tau\in\Ths[T]} 
    \left(h_\tau^{-\frac{3}{4}}\| \bpi_T^k\bw - \bR_T^k\uline{\hat{\bw}}_T \|_{L^2(\tau)^3}\right)^4\\
    &\leq
    \sum_{\tau\in\Ths[T]}
    h_\tau^{-3}
    \left(
    \| \bpi_T^k\bw - \bw \|_{L^2(T)^3}
    + \| \bw - \bR_T^k\uline{\hat{\bw}}_T  \|_{L^2(T)^3}
    \right)^4\\
    \\
    &\lesssim
    \sum_{\tau\in\Ths[T]}
    h_\tau^{-3}  h_T^{4(k+1)} |\bw|_{H^{k+1}(T)^3}^4
    \lesssim
    h_T^{4(k+1)}|\bw |_{W^{k+1,4}(T)^3}^4,
  \end{align*}
  where:
  to pass to the second line we have used the reverse Lebesgue embedding \eqref{eq:rev:emb:T} for $(X,\alpha,\beta)=(T,4,2)$;
  to pass to the third line, we have inserted $\pm\bw$ and used a triangle inequality along with $\tau\subset T$;
  to pass to the fourth line, we have used the approximation properties  \eqref{eq:l2proj:error:cell} of the $L^2$-orthogonal projector with $(l,m,r,s)=(k,0,2,k+1)$ for the first addend and
  the approximation property \eqref{ineq:rtn:approx} for the second addend;
  the conclusion follows from $h_\tau^{-1} h_T \lesssim 1$ (consequence of mesh regularity), the bound \eqref{ineq:card.IT.F} on $\card(\Ths[T])$, and the Lebesgue embedding $\|\zeta\|_{L^2(T)}\lesssim h_T^{\frac{3}{4}}  \|\zeta\|_{L^4(T)}$ valid for all $\zeta \in L^4(T)$.
  Plugging the above bound into \eqref{eq:IRTN:l4:ebound1}, we get
  $$
  \| \bw- \bR_T^k\uline{\hat{\bw}}_T\|_{L^4(T)^3}
  \lesssim    h_T^{k+1}|\bw |_{W^{k+1,4}(T)^3}.
  $$
  In conclusion, we have that         
  \begin{align}\label{eq:curl:T3:bound}
    |\mathfrak{T}_3|
    &\lesssim
    h^{k+1}| \bw |_{H^1(\Omega)^3} | \bw  |_{W^{k+1,4}(\calT_h)^3} \| \uline{\bz}_h \|_{1,h}.
  \end{align}
  Using similar arguments as for $\mathfrak{T}_3$, we have for the last term
  \begin{equation}\label{eq:curl:T4:bound}
    |\mathfrak{T}_4|
    \lesssim
    h^{k+1}| \bw |_{H^1(\Omega)^3}  | \bw |_{W^{k+1,4}(\calT_h)^3} \| \uline{\bz}_h \|_{1,h}.
  \end{equation}
  \\
  \underline{(iv.D) \emph{Conclusion.}}
  Taking absolute values in \eqref{eq:curl:all:T},
  recalling the definition \eqref{eq:dual.norm} of  the dual norm, and using the estimates
  \eqref{eq:curl:T1:bound},
  \eqref{eq:curl:T2:bound},
  \eqref{eq:curl:T3:bound},
  and \eqref{eq:curl:T4:bound}, and additionally noticing that 
  $\seminorm{H^1(\Omega)^3}{\bw}\lesssim\seminorm{W^{1,4}(\Omega)^3}{\bw}$, the conclusion follows.
\end{proof}

\subsection{Discrete problem and main results}\label{sec:disc.problem}

The HHO discretization of problem \eqref{eq:nstokes:weak} reads:
Find $(\uline{\bu}_h,p_h)\in \uline{\bU}_{h,0}^k\times P_h^k$ such that
\begin{subequations}
  \label{eq:nstokes:discrete}
  \begin{alignat}{2}
    \nu a_h(\uline{\bu}_h,\uline{\bv}_h)
    +
    t_h(\uline{\bu}_h,\uline{\bu}_h,\uline{\bv}_h)
    +b_h(\uline{\bv}_h,p_h)  
    &= \ell_h(\bef,\uline{\bv}_h)  &\qquad&\forall \uline{\bv}_h \in \uline{\bU}^{k}_{h,0}
,
  \label{eq:nstokes:discrete:momentum}
    \\
    -b_h(\uline{\bu}_h,q_h)&=0 &\qquad&\forall q_h \in  \mathbb{P}^k(\calT_h).
    \label{eq:nstokes:discrete:mass}
  \end{alignat}
\end{subequations}
The existence of a solution to \eqref{eq:nstokes:discrete} for any $\bef\in L^2(\Omega)^3$ can be proved using a topological degree argument as in \cite[Theorem 1]{Di-Pietro.Krell:18}.
Similarly, uniqueness can be proved along the lines of Theorem 2 therein under a smallness condition on $\bef$.

Recalling the Hodge decomposition \eqref{eq:hodge.f} and denoting by $C_P$ a Poincar\'e constant in $\Omega$, Proposition \ref{lem:uh:a-priori} below is the discrete equivalent of the following a priori continuous bound (see \cite[Section 2.3]{Castanon-Quiroz.Di-Pietro:20})
\begin{equation}\label{eq:u:a-priori}
  |\bu|_{H^1(\Omega)^3}\le \nu^{-1}C_{\rm P}\|\bg\|_{L^2(\Omega)^3}.
\end{equation}

\begin{proposition}[Uniform a priori bound on the discrete velocity]\label{lem:uh:a-priori}
  Let $(\uline{\bu}_h,p_h)\in\uline{\bU}_{h,0}^k\times P_h^k$ be a solution to \eqref{eq:nstokes:discrete}.
  Then, given the Hodge decomposition \eqref{eq:hodge.f} of $\bef$, we have the following uniform a priori bound for the velocity:
  \[%% begin{equation}\label{eq:uh:a-priori}
    \|\uline{\bu}_h\|_{1,h}\lesssim\nu^{-1}\|\bg\|_{L^2(\Omega)^3}.
  \]%% end{equation}
\end{proposition}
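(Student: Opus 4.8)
The plan is to test the discrete momentum balance \eqref{eq:nstokes:discrete:momentum} with $\uline{\bv}_h=\uline{\bu}_h$ (a legitimate choice, since $\uline{\bu}_h\in\uline{\bU}_{h,0}^k$) and to exploit the structural properties of the three forms on the left-hand side. First I would observe that the convective contribution vanishes by the non-dissipativity property \eqref{ns:th:skewsym}, so that $t_h(\uline{\bu}_h,\uline{\bu}_h,\uline{\bu}_h)=0$. Next, since the discrete pressure $p_h$ belongs to $P_h^k\subset\Poly{k}(\calT_h)$, the mass equation \eqref{eq:nstokes:discrete:mass} with $q_h=p_h$ gives $b_h(\uline{\bu}_h,p_h)=0$. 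Combining these two facts with the coercivity \eqref{eq:ns:stab.h} of $a_h$, I obtain
\[
  \nu C_a\|\uline{\bu}_h\|_{1,h}^2
  \le
  \nu a_h(\uline{\bu}_h,\uline{\bu}_h)
  =
  \ell_h(\bef,\uline{\bu}_h).
\]

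The crux is then to bound the right-hand side by $\|\bg\|_{L^2(\Omega)^3}\|\uline{\bu}_h\|_{1,h}$, \emph{independently of the irrotational part $\lambda\GRAD\psi$ of the body force}. To this end I would invoke the velocity invariance \eqref{eq:ns:lh:invariance} which, recalling the Hodge decomposition \eqref{eq:hodge.f}, yields $\ell_h(\bef,\uline{\bu}_h)=\ell_h(\bg,\uline{\bu}_h)+b_h(\uline{\bu}_h,\lambda\pi_h^k\psi)$. Crucially, since $\lambda\pi_h^k\psi\in\Poly{k}(\calT_h)$, the mass equation \eqref{eq:nstokes:discrete:mass} annihilates the second term, leaving $\ell_h(\bef,\uline{\bu}_h)=\ell_h(\bg,\uline{\bu}_h)=\int_\Omega\bg\cdot\bR_h^k\uline{\bu}_h$. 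A Cauchy--Schwarz inequality followed by the Sobolev inequality \eqref{eq:rtn:Lp.bound} for the velocity reconstruction with $r=2$ then gives
\[
  |\ell_h(\bg,\uline{\bu}_h)|
  \le
  \|\bg\|_{L^2(\Omega)^3}\,\|\bR_h^k\uline{\bu}_h\|_{L^2(\Omega)^3}
  \lesssim
  \|\bg\|_{L^2(\Omega)^3}\,\|\uline{\bu}_h\|_{1,h}.
\]

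Putting the two estimates together and dividing by $\|\uline{\bu}_h\|_{1,h}$ (the bound being trivial when this quantity vanishes) delivers the claimed inequality $\|\uline{\bu}_h\|_{1,h}\lesssim\nu^{-1}\|\bg\|_{L^2(\Omega)^3}$, the discrete transcription of \eqref{eq:u:a-priori}. There is no genuine analytical obstacle here, as every ingredient has already been established in Lemmas \ref{lem:ns:lh} and \ref{lem:ns:th}, in Proposition \ref{prop:rtn:Lp.bound}, and in \eqref{eq:ns:stab.h}. The only point demanding care -- and the entire reason the bound is pressure-robust -- is the cancellation $b_h(\uline{\bu}_h,\lambda\pi_h^k\psi)=0$: it hinges on the fact that the divergence-preserving reconstruction $\bR_h^k$ makes $\ell_h$ satisfy the velocity invariance \eqref{eq:ns:lh:invariance}, so that the gradient part of the force is absorbed into a discretely divergence-constrained term killed by \eqref{eq:nstokes:discrete:mass}, rather than polluting the velocity estimate with the (potentially large) factor $\lambda$.
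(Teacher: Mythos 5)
Your proof is correct and is essentially the paper's own argument: the paper proves this proposition by referring to \cite[Proposition 8]{Castanon-Quiroz.Di-Pietro:20}, whose reasoning is precisely your energy argument (test with $\uline{\bu}_h$, kill the convective term by \eqref{ns:th:skewsym} and the pressure term by \eqref{eq:nstokes:discrete:mass}, use the velocity invariance \eqref{eq:ns:lh:invariance} together with the mass equation to discard $\lambda\pi_h^k\psi$, and conclude via \eqref{eq:ns:stab.h} and \eqref{eq:rtn:Lp.bound} with $r=2$), with Lemmas \ref{lem:ns:lh} and \ref{lem:ns:th} playing the roles of the corresponding results there. No gaps: in particular your use of $\lambda\pi_h^k\psi$ as a test function is legitimate since \eqref{eq:nstokes:discrete:mass} holds for all $q_h\in\Poly{k}(\calT_h)$, not merely for zero-mean pressures.
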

\begin{proof}
The proof follows the same reasoning  as \cite[Proposition 8]{Castanon-Quiroz.Di-Pietro:20} with Lemmas \ref{lem:ns:lh} and \ref{lem:ns:th} replacing, respectively, \cite[Eqs. (41)--(42) and Lemma 7]{Castanon-Quiroz.Di-Pietro:20}.
\end{proof}

\begin{remark}[Efficient implementation]\label{rem:static.cond}
  When solving the algebraic problem corresponding to \eqref{eq:nstokes:discrete} by a first order iterative algorithm, all element-based velocity unknowns and all but one pressure unknowns per element can be statically condensed at each iteration in the spirit of \cite[Section 6.2]{Di-Pietro.Ern.ea:16}
    ; see \cite{Botti.Di-Pietro:22} for a study of the effect of static condensation strategies on the multigrid resolution of the global algebraic systems arising from HHO discretizations of incompressible flow problems.
\end{remark}
 
\begin{remark}[The two-dimensional case]\label{rem:2d}
  The two-dimensional version of the method \eqref{eq:nstokes:discrete} will be considered numerically in Section \ref{sec:tests}.
  Denoting by $u_i$, $i=1,\ldots,3$, the component of the velocity field along the Cartesian axis $x_i$, the two-dimensional plane velocity problem can be recovered from \eqref{eq:nstokes:weak} setting $u_3=0$ and assuming that $u_1$ and $u_2$ do not depend on $x_3$.
\end{remark}

We next consider the discretization error defined as the difference between the solution to the HHO scheme and the interpolate of the exact solution.

\begin{theorem}[Error estimate for small data]\label{thm:nstokes:disc:err.est}
  Recalling the Hodge decomposition \eqref{eq:hodge.f} of the forcing term $\bef$, we assume that it holds, for some $\alpha\in (0,1),$
  \begin{equation*}%% \label{eq:smalldata:assum}
    \|\bg \|_{\Ldeuxd}\leq \alpha\frac{\nu^2C_a}{C_tC_IC_P},
  \end{equation*}
  where $C_a$ and $C_t$ are defined in \eqref{eq:ns:stab.h} and \eqref{ns:th:boundedness}, while $C_I$ denotes the continuity constant of the HHO interpolator in the discrete $H^1$-like norm (see \cite[Proposition 2.2]{Di-Pietro.Droniou:20}) and $C_P$ is the Poincar\'e constant in \eqref{eq:u:a-priori}.  
{Let $k\geq 0$} and let $(\bu,p) \in \bU\times P$ and $(\underline{\bu}_h,p_h) \in  \uline{\bU}_h^k \times P_h^k $ solve \eqref{eq:nstokes:weak} and \eqref{eq:nstokes:discrete}, respectively.
  Assuming the additional regularity $\bu\in H^{k+2}(\calT_h)^3$ and $p\in H^1(\Omega)\cap H^{k+1}(\calT_h)$, it holds:
  \begin{multline}\label{eq:nstokes:disc:err.est}
    \| \uline{\bu}_h - \uline{\bI}_h^k\bu \|_{1,h}
    + \nu^{-1}  \| p_h - \pi_h^kp \|_{\Ldeux}
    \\
    \lesssim
    h^{k+1}(1-\alpha)^{-1} \left (
    |\bu|_{H^{k+2}(\calT_h)^3} + {\nu}^{-1} \|\bu\|_{W^{1,4}(\Omega)^3}  | \bu |_{W^{k+1,4}(\calT_h)^3}
    \right).
  \end{multline}
  where the hidden constant is independent of $\nu$, $\lambda$, $h$, as well as $(\bu,p)$.
\end{theorem}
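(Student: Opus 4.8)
The plan is to follow the classical strategy for saddle-point problems with a small nonlinear perturbation, exploiting the four consistency properties established above, with the pressure contribution engineered to cancel exactly. Throughout, set $\uline{\hat{\bu}}_h\coloneq\uline{\bI}_h^k\bu$, $\hat p_h\coloneq\pi_h^kp$, and introduce the errors $\uline{\be}_h\coloneq\uline{\bu}_h-\uline{\hat{\bu}}_h$ and $\epsilon_h\coloneq p_h-\hat p_h$. First I would record that $\uline{\be}_h$ is discretely divergence-free: since $\DIV\bu=0$, the commutation property \eqref{eq:DT.commuting} gives $b_h(\uline{\hat{\bu}}_h,q_h)=0$, while \eqref{eq:nstokes:discrete:mass} gives $b_h(\uline{\bu}_h,q_h)=0$; hence $b_h(\uline{\be}_h,q_h)=0$ for all $q_h\in\Poly{k}(\calT_h)$, and in particular $b_h(\uline{\be}_h,\epsilon_h)=0$. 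Subtracting \eqref{eq:nstokes:discrete:momentum} evaluated at $(\uline{\hat{\bu}}_h,\hat p_h)$ from the same equation at $(\uline{\bu}_h,p_h)$ yields the error equation $\nu a_h(\uline{\be}_h,\uline{\bv}_h)+\big[t_h(\uline{\bu}_h,\uline{\bu}_h,\uline{\bv}_h)-t_h(\uline{\hat{\bu}}_h,\uline{\hat{\bu}}_h,\uline{\bv}_h)\big]+b_h(\uline{\bv}_h,\epsilon_h)=-\mathcal{R}_h(\uline{\bv}_h)$ for all $\uline{\bv}_h\in\uline{\bU}_{h,0}^k$, where $\mathcal{R}_h(\uline{\bv}_h)\coloneq\nu a_h(\uline{\hat{\bu}}_h,\uline{\bv}_h)+t_h(\uline{\hat{\bu}}_h,\uline{\hat{\bu}}_h,\uline{\bv}_h)+b_h(\uline{\bv}_h,\hat p_h)-\ell_h(\bef,\uline{\bv}_h)$ is the consistency residual.

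The core of the argument is the reformulation of $\mathcal{R}_h$. Using the definition of the convective consistency error I would replace $t_h(\uline{\hat{\bu}}_h,\uline{\hat{\bu}}_h,\uline{\bv}_h)$ by $\ell_h((\ROT\bu)\times\bu,\uline{\bv}_h)-\mathcal{E}_{t,h}(\bu;\uline{\bv}_h)$, then invoke the strong form of \eqref{eq:nstokes:weak:momentum}, namely $(\ROT\bu)\times\bu-\bef=\nu\LAP\bu-\GRAD p$ (valid under the assumed regularity), together with linearity of $\ell_h$ in its first argument, to get $\ell_h((\ROT\bu)\times\bu,\uline{\bv}_h)-\ell_h(\bef,\uline{\bv}_h)=\nu\ell_h(\LAP\bu,\uline{\bv}_h)-\ell_h(\GRAD p,\uline{\bv}_h)$. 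Since $p\in H^1(\Omega)$, the identity underlying the velocity-invariance property \eqref{eq:ns:lh:invariance}, applied to $p$, gives $\ell_h(\GRAD p,\uline{\bv}_h)=b_h(\uline{\bv}_h,\hat p_h)$, so the genuine pressure term cancels exactly against $b_h(\uline{\bv}_h,\hat p_h)$ in $\mathcal{R}_h$. This cancellation, which rests on the divergence-preserving property \eqref{eq:darcyT:weak:a} of $\bR_T^k$ (it is what makes $\int_\Omega\GRAD p\cdot\bR_h^k\uline{\bv}_h$ reduce to $b_h(\uline{\bv}_h,\hat p_h)$), is the mechanism behind pressure robustness and removes all dependence on the pressure regularity. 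Writing $\ell_h(\LAP\bu,\uline{\bv}_h)=\int_\Omega\LAP\bu\cdot\bv_h+\mathcal{E}_{\ell,h}(\LAP\bu;\uline{\bv}_h)$ from \eqref{Eh.lh}, I am left with $\mathcal{R}_h(\uline{\bv}_h)=\nu\big[a_h(\uline{\hat{\bu}}_h,\uline{\bv}_h)+\int_\Omega\LAP\bu\cdot\bv_h\big]+\nu\,\mathcal{E}_{\ell,h}(\LAP\bu;\uline{\bv}_h)-\mathcal{E}_{t,h}(\bu;\uline{\bv}_h)$.

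Next I would bound $\|\mathcal{R}_h\|_{1,h,*}$: the bracketed term is the standard HHO viscous consistency error, controlled by $h^{k+1}|\bu|_{H^{k+2}(\calT_h)^3}$ (see, e.g., \cite[Chapter 8]{Di-Pietro.Droniou:20}); the term $\nu\,\mathcal{E}_{\ell,h}(\LAP\bu;\cdot)$ is bounded by \eqref{eq:ns:lh:consistency} with $\boldsymbol{\phi}=\LAP\bu$ (requiring precisely $\bu\in H^{k+2}(\calT_h)^3$); and $\mathcal{E}_{t,h}(\bu;\cdot)$ is bounded by \eqref{ns:th:consistency}. Altogether $\|\mathcal{R}_h\|_{1,h,*}\lesssim\nu h^{k+1}|\bu|_{H^{k+2}(\calT_h)^3}+h^{k+1}\|\bu\|_{W^{1,4}(\Omega)^3}|\bu|_{W^{k+1,4}(\calT_h)^3}$. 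For the velocity estimate I test the error equation with $\uline{\bv}_h=\uline{\be}_h$: the pressure term vanishes, and expanding the convective difference by trilinearity and applying the non-dissipativity \eqref{ns:th:skewsym} to $t_h(\uline{\hat{\bu}}_h,\uline{\be}_h,\uline{\be}_h)$ and $t_h(\uline{\be}_h,\uline{\be}_h,\uline{\be}_h)$ leaves only $t_h(\uline{\be}_h,\uline{\hat{\bu}}_h,\uline{\be}_h)$. Coercivity \eqref{eq:ns:stab.h} and boundedness \eqref{ns:th:boundedness} then give $\nu C_a\|\uline{\be}_h\|_{1,h}^2\le\|\mathcal{R}_h\|_{1,h,*}\|\uline{\be}_h\|_{1,h}+C_t\|\uline{\hat{\bu}}_h\|_{1,h}\|\uline{\be}_h\|_{1,h}^2$; bounding $\|\uline{\hat{\bu}}_h\|_{1,h}\le C_I|\bu|_{H^1(\Omega)^3}\le C_I\nu^{-1}C_P\|\bg\|_{\Ldeuxd}$ via interpolator continuity and \eqref{eq:u:a-priori}, the smallness assumption forces $C_t\|\uline{\hat{\bu}}_h\|_{1,h}\le\alpha\nu C_a$, so the nonlinear term is absorbed and $\nu C_a(1-\alpha)\|\uline{\be}_h\|_{1,h}\le\|\mathcal{R}_h\|_{1,h,*}$, which produces the $(1-\alpha)^{-1}$ factor and the velocity part of \eqref{eq:nstokes:disc:err.est}.

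Finally, for the pressure I would use the inf-sup stability of $b_h$ (\cite[Lemma 8.12]{Di-Pietro.Droniou:20}): there is $\beta>0$ with $\beta\|\epsilon_h\|_{\Ldeux}\le\sup_{\uline{\bv}_h}b_h(\uline{\bv}_h,\epsilon_h)/\|\uline{\bv}_h\|_{1,h}$. From the error equation, $b_h(\uline{\bv}_h,\epsilon_h)=-\mathcal{R}_h(\uline{\bv}_h)-\nu a_h(\uline{\be}_h,\uline{\bv}_h)-[t_h(\uline{\bu}_h,\uline{\bu}_h,\uline{\bv}_h)-t_h(\uline{\hat{\bu}}_h,\uline{\hat{\bu}}_h,\uline{\bv}_h)]$; bounding the right-hand side by $\|\mathcal{R}_h\|_{1,h,*}$, the boundedness of $a_h$, and \eqref{ns:th:boundedness} combined with the a priori bounds on $\|\uline{\bu}_h\|_{1,h}$, $\|\uline{\hat{\bu}}_h\|_{1,h}$ (Proposition \ref{lem:uh:a-priori} and smallness, which make $\nu^{-1}\|\bg\|_{\Ldeuxd}\lesssim\nu$) and the velocity estimate just obtained, gives $\|\epsilon_h\|_{\Ldeux}\lesssim\|\mathcal{R}_h\|_{1,h,*}+\nu\|\uline{\be}_h\|_{1,h}$; multiplying by $\nu^{-1}$ and inserting the bound on $\|\mathcal{R}_h\|_{1,h,*}$ yields the pressure part of \eqref{eq:nstokes:disc:err.est}. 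The main obstacle is not any single estimate — the four consistency bounds are already in hand — but the careful reformulation of $\mathcal{R}_h$ in the second paragraph, where the exact cancellation of the pressure contribution via \eqref{eq:ns:lh:invariance} and \eqref{eq:darcyT:weak:a} is what guarantees that no dependence on $\lambda$, $\psi$, or the pressure regularity survives; keeping the bookkeeping of the nonlinear absorption consistent with the constants $C_a$, $C_t$, $C_I$, $C_P$ entering the smallness condition is the other delicate point.
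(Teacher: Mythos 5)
Your proposal is correct and takes essentially the same approach as the paper, whose proof is simply deferred to \cite[Theorem 11]{Castanon-Quiroz.Di-Pietro:20}: the same error equation and residual $\mathcal{R}_h$, the same exact cancellation of the pressure via the velocity invariance \eqref{eq:ns:lh:invariance} (resting on \eqref{eq:darcyT:weak:a}), the same absorption of $t_h(\uline{\be}_h,\uline{\hat{\bu}}_h,\uline{\be}_h)$ under the smallness condition yielding the $(1-\alpha)^{-1}$ factor, and the same inf-sup argument of \cite[Lemma 8.12]{Di-Pietro.Droniou:20} for the pressure. Your reconstruction correctly slots the new ingredients of this paper (Lemmas \ref{lem:ns:lh} and \ref{lem:ns:th}, in particular \eqref{eq:ns:lh:consistency} with $\boldsymbol{\phi}=\LAP\bu$ and \eqref{ns:th:consistency}) exactly where the cited argument uses their simplicial-mesh counterparts.
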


\begin{proof}
  Analogous to that of \cite[Theorem 11]{Castanon-Quiroz.Di-Pietro:20}.
\end{proof}

\begin{remark}[Pressure robustness]\label{rem:pressure-robustness}
  The error estimate \eqref{eq:nstokes:disc:err.est} is pressure-robust since the right-hand side does not depend on $\lambda$ in \eqref{eq:hodge.f} nor on the pressure.
\end{remark}

%------------------------------------------------------------------------------%
%------------------------------------------------------------------------------%

\section{Numerical tests}\label{sec:tests}

In this section we  verify  numerically the proposed method for general meshes with convex elements for $\Omega \subset \mathbb{R}^2$. For each element $T \in \Th$, we construct its simplicial submesh $\Ths[T]$ {using an ear clipping algorithm}, i.e.,
we construct $\Ths[T]$
 in such a way that no additional  internal nodes  are introduced and that $\Fhs[T] = \Fh[T]$ (this construction fulfills the assumptions made in Section \ref{sec:setting:mesh}).
For the sake of completeness, we also include comparisons with the original HHO method  of \cite{Botti.Di-Pietro.ea:19}.
Our implementation is based on the \texttt{HArDCore} library\footnote{\url{https://github.com/jdroniou/HArDCore}} and makes extensive use of the linear algebra \texttt{Eigen} open-source library \cite{Guennebaud.Jacob.ea:10}.
All the steady-state computations presented hereafter are done by means of the pseudo-transient-continuation algorithm analyzed by \cite{Kelley.Keyes:98} employing the Selective Evolution Relaxation (SER) strategy \cite{Mulder.Van-Leer:85} for evolving the pseudo-time step according to the Newton's equations residual.
Convergence to steady-state is attained when the Euclidean norm of the residual for the momentum equation drops below $10^{-11}$.
At each pseudo-time step, the linearized equations are exactly solved by means of the direct solver Pardiso \cite{Schenk.Gartner.ea:01}.
Accordingly, the Euclidean norm of the residual for the continuity equation is comparable to the machine epsilon at all pseudo-time steps.

\begin{figure}[!htb]
  \centering
  \begin{minipage}[b]{0.32\textwidth}
    \centering
    \includegraphics[width=0.80\textwidth]{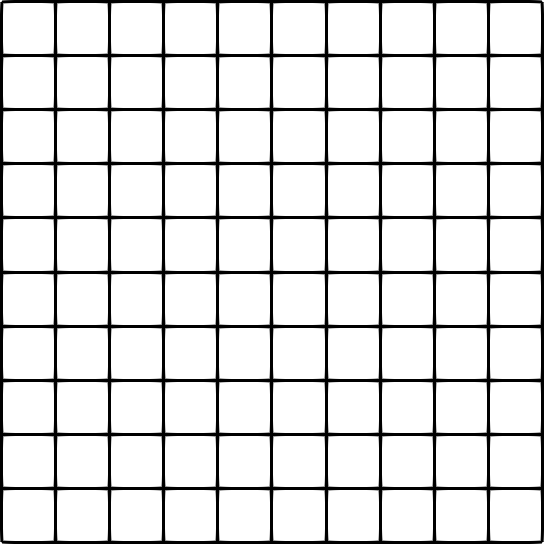} 
    \subcaption{Cartesian.}
  \end{minipage}%
  \begin{minipage}[b]{0.32\textwidth}
    \centering
    \includegraphics[width=0.80\textwidth]{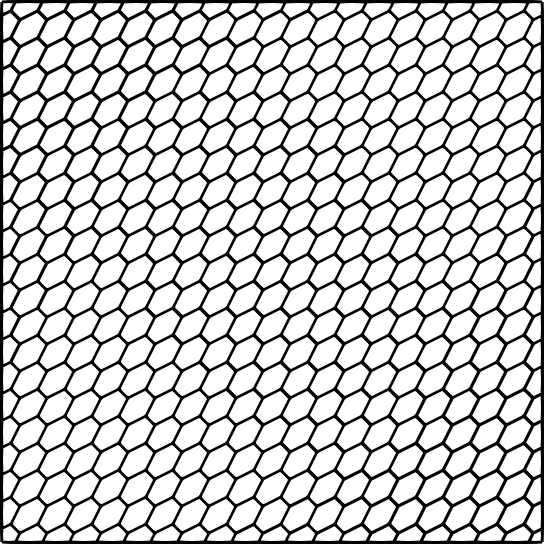} 
    \subcaption{Hexagonal.}
  \end{minipage}%
  \begin{minipage}[b]{0.32\textwidth}
  \centering
  \includegraphics[width=0.80\textwidth]{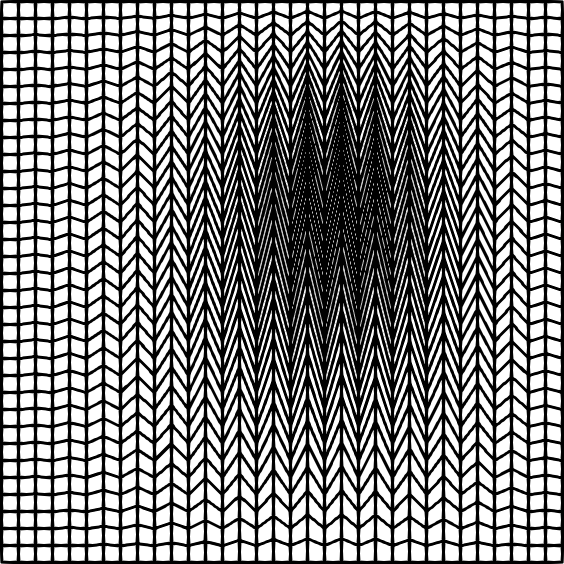} 
  \subcaption{Kershaw.}
  \end{minipage}%
  \caption{Coarsest meshes used in Section \ref{sec:tests:kovasznay}. \label{fig:meshes:coarsest}}
\end{figure}

%------------------------------------------------------------------------------%
% Kovasznay
%------------------------------------------------------------------------------%

\subsection{Kovasznay flow}\label{sec:tests:kovasznay}
We start by assessing the convergence properties of the method using the well known analytical solution of Kovasznay \cite{Kovasznay:48} with $\nu=0.025$; see, e.g., \cite[Section 6.1]{Di-Pietro.Droniou:20} for the expression of the velocity and pressure fields.
We consider computations  over three $h$-refined mesh families (Cartesian, hexagonal and Kershaw type). 
Figure \ref{fig:meshes:coarsest} shows the coarsest mesh for each family.
We monitor the following quantities in Table \ref{table:kovasznay}:
$N_{\rm dof}$ and $N_{\rm nz}$ denoting, respectively, the number of discrete unknowns and nonzero entries of the statically condensed linearized problem;
$\nnorm[\nu,h]{\uline{\be}_h}\coloneq\left[\nu\mathrm{a}_h(\uline{\be}_h,\uline{\be}_h)\right]^{\nicefrac12}$, the energy norm of the error $\uline{\be}_h\coloneq\uline{\bu}_h - \uline{\bI}_h^k\bu$ on the velocity 
(using  the global norm equivalence \eqref{eq:ns:stab.h}, 
an estimate in $h^{k+1}$ for this quantity is readily inferred from \eqref{eq:nstokes:disc:err.est});
$\nnorm[L^2(\Omega)^2]{\be_h}$ and $\nnorm[L^2(\Omega)]{\epsilon_h}$, the $L^2$-errors on the velocity and the pressure, respectively.
Each error measure is accompanied by the corresponding Estimated Order of Convergence (EOC) computed using successive refinement steps.
The results collected in Table \ref{table:kovasznay} show that both the energy norm of the error on the velocity and the $L^2$-norm of error on the pressure converge as $h^{k+1}$ as expected. Additionally, the $L^2$-norm of the error of the velocity converges with rates close to $h^{k+2}$. 
%%%%%%%%%%%%%%%%%%%%%%%%%%%%%%%%%%%%%%%%%%%%%%%%%%%%%%%%%%%%%%%%%%%%%%%%%%%%%%%%%%%%%%%%%%%%%%
%Convergence rates for kovasznay
\renewcommand\floatplace[1]{}
\begin{table}
  \centering
  \begin{tabular}{ccccccc}
    \toprule
    $N_{\rm dof}$      &
    $\nnorm[\nu,h]{\uline{\be}_h}$& EOC
    & $\nnorm[L^2(\Omega)^2]{{\be}_h}$    & EOC
    & $\nnorm[L^2(\Omega)]{\epsilon_h}$  &EOC \\
    \midrule
    \multicolumn{7}{c}{Cartesian, $k=0$} \\
    \midrule
    540           &  5.92E-01  & --          &  1.11E-01  & --           &  2.54E-01  & --           \\
    2080          & 3.35E-01   & 0.822       & 3.54E-02   & 1.644        & 8.58E-02   & 1.565        \\
    8160         & 1.78E-01   & 0.911       & 9.92E-03   & 1.835        & 2.46E-02   & 1.800        \\
    32320         & 9.10E-02   & 0.969       & 2.59E-03   & 1.938        & 6.50E-03   & 1.923        \\
    \midrule
    \multicolumn{7}{c}{Cartesian, $k=1$} \\
    \midrule
    980     & 2.04E-01      & --       & 1.97E-02    &--       & 4.91E-02    & --      \\
    3760    & 5.73E-02      & 1.831    & 2.28E-03    & 3.114   & 5.86E-03    & 3.067   \\
    14720   & 1.51E-02      & 1.926    & 2.91E-04    & 2.970   & 7.75E-04    & 2.918   \\
    58240   & 3.85E-03      & 1.96     & 3.75E-05    & 2.957   & 1.08E-04  &  2.845    \\
    \midrule
    \multicolumn{7}{c}{Hexagonal, $k=0$} \\
    \midrule
    3241   &  8.26E-01  &  --    &  5.46E-02  &  --    &  1.53E-01  &  --       \\
    12081  &  4.42E-01  &  0.901 &  1.71E-02  &  1.674 &  4.21E-02  &  1.861  \\
    46561  &  2.27E-01  &  0.964 &  4.78E-03  &  1.838 &  1.10E-02  &  1.929  \\
    182721 &  1.14E-01  &  0.986 &  1.25E-03  &  1.931 &  2.85E-03  &  1.954  \\
    \midrule
    \multicolumn{7}{c}{Hexagonal, $k=1$} \\
    \midrule
    6041     &  4.80E-01 &    --    &  3.04E-02 &    --    &  1.03E-01 &    --        \\
    22481     &  8.40E-02 &    2.515 &  1.42E-03 &    4.421 &  4.53E-03 &    4.507   \\
    86561     &  1.78E-02 &    2.237 &  1.23E-04 &    3.529 &  4.20E-04 &    3.432   \\
    339521     &  4.20E-03 &    2.085 &  1.37E-05 &    3.164 &  6.84E-05 &    2.618   \\
    \midrule
    \multicolumn{7}{c}{Kershaw, $k=0$} \\
    \midrule
    5577   & 5.76E-01  &   --       & 1.78E-01  &   --        & 2.10E-01  &   --        \\
    22044  & 2.46E-01  &   1.231    & 6.36E-02  &   1.488     & 8.74E-02  &   1.269   \\
    49401  & 1.50E-01  &   1.230    & 2.93E-02  &   1.921     & 4.27E-02  &   1.774   \\
    87648  & 1.08E-01  &   1.142    & 1.66E-02  &   1.983     & 2.47E-02  &   1.908   \\
    136785 & 8.46E-02  &   1.092    & 1.06E-02  &   1.995     & 1.60E-02  &   1.951   \\
    \midrule
    \multicolumn{7}{c}{Kershaw, $k=1$} \\
    \midrule
    10065 & 4.51E-01  & --           & 5.88E-02  & --           & 1.50E-01  & --         \\
    39732   & 7.00E-02  &  2.701       & 2.35E-03  &  4.672       & 5.12E-03  &  4.897     \\
    89001   & 3.02E-02  &  2.080       & 5.08E-04  &  3.785       & 1.09E-03  &  3.830     \\
    157872  & 1.64E-02  &  2.131       & 1.78E-04  &  3.653       & 4.23E-04  &  3.299     \\
    246345  & 1.04E-02  &  2.032       & 7.80E-05  &  3.703       & 2.11E-04  &  3.112     \\
    \bottomrule
  \end{tabular}
\caption{Convergence rates for the numerical test of Section \ref{sec:tests:kovasznay}.\label{table:kovasznay}}
\end{table}

\subsection{Robustness of the velocity error estimate}\label{ssec:lif:test}

The second numerical example, inspired by \cite[Benchmark 3.3]{Linke.Merdon:16*1}, is meant to demonstrate the robustness of the proposed method for large irrotational body forces.
Specifically, we verify numerically the fact that the approximation of the velocity is independent of both $\lambda$ and $p$.
Letting $\Omega=(0,1)^2$ and $\lambda\ge 0$, we solve the Dirichlet problem corresponding to the exact solution $(\bu,p)$ in \eqref{eq:nstokes:weak} with velocity components given by $\bu(\bx) \coloneq \begin{pmatrix}-x_2 \\ x_1\end{pmatrix}$ and pressure given by $p(\bx) \coloneq {\lambda}x_1^3 + \frac{x_1^2+ x_2^2}{2} -\frac{1}{4}$.
We set $\nu=1$, then observe that the force in \eqref{eq:nstokes:weak:momentum} is purely irrotational, i.e., $\bef(\bx) = \begin{pmatrix} 3\lambda x_1^2 \\ 0\end{pmatrix}$.
In the computations, we take $\lambda=10^6$ and consider a sequence of uniformly $h$-refined meshes equivalent (by scaling and translation) to the three mesh families used in the previous section, see Figure \ref{fig:meshes:coarsest}. 
Table \ref{tbl:lif:prf:cart.hex} collects the results for the Cartesian and hexagonal mesh families, and Table \ref{tbl:lif:prf:ker} for the Kershaw mesh family.
For the sake of comparison, we also report in these tables the corresponding results obtained using the original HHO method of \cite{Botti.Di-Pietro.ea:19}.
It can be noticed that the solution is exactly reproduced by the present method with $k=1$ on all the meshes, while a quick convergence is observed for $k=0$ on the hexagonal and Kershaw meshes, most likely due to the quadratic nature of the pressure.
By contrast, the HHO method of \cite{Botti.Di-Pietro.ea:19} shows large errors on the velocity due to the lack of pressure-robustness.

%------------------------------------------------------------------------------%
% Cartesian & Hexagonal-Pressure Robust Ex
%------------------------------------------------------------------------------%

\begin{sidewaystable}
  \centering
  \begin{tabular}{ccccccc|cccccc}
    \toprule
    $N_{\rm dof}$      &
    $\nnorm[\nu,h]{\uline{\be}_h}$& EOC
    & $\nnorm[L^2(\Omega)^2]{{\be}_h}$    & EOC
    & $\nnorm[L^2(\Omega)]{\epsilon_h}$  &EOC 
    & $\nnorm[\nu,h]{\uline{\be}_h}$& EOC
    & $\nnorm[L^2(\Omega)^2]{{\be}_h}$    & EOC
    & $\nnorm[L^2(\Omega)]{\epsilon_h}$  &EOC \\
    \midrule
    \multicolumn{1}{c}{}&
    \multicolumn{6}{c|}{Cartesian, $k=0$, proposed method}&
    \multicolumn{6}{c}{Cartesian, $k=0$,  HHO method of \cite{Botti.Di-Pietro.ea:19} }\\
    \midrule
    540   &  1.94E-11   & --  &  1.74E-12   & --      &  8.28E-16   & --   
    
    & 2.77E+05  &   --    & 1.79E+04  &   --   & 1.22E+02  &   --         \\   

    2080  &  3.06E-11   & --  &  1.68E-12   & 0.052   &  1.09E-15   & --   
    & 2.34E+04  &   3.562 & 9.93E+02  &   4.170& 1.56E+00  &   6.290 \\    
    8160 &  3.61E-11   & --  &  6.22E-12   & --      &  1.88E-15   & --     
    & 1.18E+04  &   0.991 & 2.57E+02  &   1.951& 1.13E-01  &   3.789    \\  
    32320 &  3.30E-11   & 0.129  &  1.77E-12   & 1.816  &  1.01E-15   & 0.904  
    & 5.92E+03  &   0.994 & 6.54E+01  &   1.973& 8.67E-03  &   3.705       \\
    \midrule
    \multicolumn{1}{c}{}&
    \multicolumn{6}{c|}{Cartesian, $k=1$, proposed method}&
    \multicolumn{6}{c}{Cartesian, $k=1$,  HHO method of \cite{Botti.Di-Pietro.ea:19} }\\
    \midrule
    980   & 1.44E-10    & --     & 5.16E-12    & --     & 6.07E-05    & --

    &2.01E+03  &  --     &9.91E+01  &  --      &3.09E-02  &  --          \\
    3760  & 1.41E-10    & 0.034  & 3.22E-12   & 0.682   & 7.59E-06    & 3.000      
    &5.06E+02  &  1.987  &1.26E+01  &  2.977   &5.06E-04  &  5.936       \\

    14720 & 1.81E-10    & --     & 2.89E-12    & 0.155  & 9.49E-07    & 3.000      
    &1.27E+02  &  1.994  &1.58E+00  &  2.990   &8.66E-06  &  5.867       \\ 
    58240 & 1.47E-10    & 0.306  & 2.42E-12    & 0.257  & 1.19E-07    & 3.000      
    &3.18E+01  &  1.997  &1.99E-01  &  2.995   &5.06E-07  &  4.097       \\
    \midrule
    \multicolumn{1}{c}{}&
    \multicolumn{6}{c|}{Hexagonal, $k=0$, proposed method}&
    \multicolumn{6}{c}{Hexagonal, $k=0$,  HHO method of \cite{Botti.Di-Pietro.ea:19} }\\
    \midrule
    3241       &  1.06E-01  &   --     &  4.18E-03  &   --      &  5.17E-05  &   --      
    & 1.97E+04   &   --    & 7.80E+02   &   --     & 1.04E+00   &   --    \\
    12081      &  9.78E-03  &   3.434  &  2.51E-04  &   4.056   &  9.34E-06  &   2.468   
    & 1.48E+04   &   0.415 & 4.37E+02   &   0.838  & 3.33E-01   &   1.640 \\          

    46561       &  8.85E-04  &   3.466  &  1.53E-05  &   4.038   &  1.67E-06  &   2.484   
    & 8.18E+03   &   0.854 & 1.36E+02   &   1.686  & 3.52E-02   &   3.244 \\
    182721       &  7.92E-05  &   3.482  &  9.42E-07  &   4.021   &  2.97E-07  &   2.492   
    & 4.15E+03   &   0.979 & 3.51E+01   &   1.950  & 2.92E-03   &   3.591 \\
    \midrule
    \multicolumn{1}{c}{}&
    \multicolumn{6}{c|}{Hexagonal, $k=1$, proposed method}&
    \multicolumn{6}{c}{Hexagonal, $k=1$,  HHO method of \cite{Botti.Di-Pietro.ea:19} }\\
    \midrule
    6041      & 2.32E-10   & --    & 2.13E-11   & --    & 7.13E-06   & --        
    & 6.72E+02  &   --     & 1.81E+01  &   --    & 1.01E-03  &   --     \\
    22481     & 2.24E-10   & 0.050 & 1.08E-11   & 0.982 & 9.17E-07   & 2.959     
    & 1.76E+02  &   1.933  & 2.23E+00  &   3.019 & 2.74E-05  &   5.207  \\
    86561     & 2.60E-10   & --    & 1.08E-11   & 0.007 & 1.16E-07   & 2.980   
    & 4.46E+01  &   1.981  & 2.81E-01  &   2.989 & 4.00E-06  &   2.774  \\
    339521      & 6.67E-10   &  --   & 1.36E-10   &  --   & 1.46E-08   & 3.0      
    & 1.12E+01  &   1.990  & 3.53E-02  &   2.991 & 6.96E-07  &   2.525  \\
    \bottomrule
  \end{tabular}
  \caption{Convergence rates for the numerical test of Section \ref{ssec:lif:test} for $\lambda=10^6$ using the Cartesian and hexagonal mesh families.\label{tbl:lif:prf:cart.hex}}.
\end{sidewaystable}

%------------------------------------------------------------------------------%
% Kershaw-Pressure Robust Ex
%------------------------------------------------------------------------------%

\begin{sidewaystable}
    \centering
    \begin{tabular}{ccccccc|cccccc}
      \toprule
      $N_{\rm dof}$      &
      $\nnorm[\nu,h]{\uline{\be}_h}$& EOC
      & $\nnorm[L^2(\Omega)^2]{{\be}_h}$    & EOC
      & $\nnorm[L^2(\Omega)]{\epsilon_h}$  &EOC 
      & $\nnorm[\nu,h]{\uline{\be}_h}$& EOC
      & $\nnorm[L^2(\Omega)^2]{{\be}_h}$    & EOC
      & $\nnorm[L^2(\Omega)]{\epsilon_h}$  &EOC \\
      \midrule
      \multicolumn{1}{c}{}&
      \multicolumn{6}{c|}{Kershaw, $k=0$, proposed method}&
      \multicolumn{6}{c}{Kershaw, $k=0$,  HHO method of \cite{Botti.Di-Pietro.ea:19} }\\
      \midrule
      5577      &  4.55E-03  &  --        &  7.46E-04  &  --        &  2.72E-07  &  --     
      & 1.42E+04  &  --    & 3.74E+02  &  --     & 2.43E-01  &  --             \\  
      22044      &  2.83E-04  &  4.027     &  4.73E-05  &  3.999     &  1.76E-08  &  3.970  
      & 7.17E+03  &  0.991 & 9.60E+01  &  1.970  & 1.80E-02  &  3.771        \\
      49401      &  5.59E-05  &  4.015     &  9.37E-06  &  4.006     &  3.50E-09  &  3.995   
      & 4.79E+03  &  1.000 & 4.30E+01  &  1.986  & 4.32E-03  &  3.532        \\

      87648      &  1.77E-05  &  4.010     &  2.97E-06  &  4.005     &  1.11E-09  &  3.992   

      & 3.59E+03  &  1.001 & 2.43E+01  &  1.985  & 1.73E-03  &  3.187        \\
      136785     &  7.23E-06  &  4.008     &  1.22E-06  &  4.005     &  4.59E-10  &  3.967   
      & 2.87E+03  &  1.001 & 1.56E+01  &  1.986  & 9.19E-04  &  2.845        \\
      
      \midrule
      \multicolumn{1}{c}{}&
      \multicolumn{6}{c|}{Kershaw, $k=1$, proposed method}&
      \multicolumn{6}{c}{Kershaw, $k=1$,  HHO method of \cite{Botti.Di-Pietro.ea:19} }\\
      \midrule
      
      10065   &  5.35E-10    &--          &  8.69E-11    &--          &  1.69E-06    &--         

      & 1.86E+02  &  --          & 2.88E+00  &  --          & 3.56E-05  &  --        \\
      39732  &  5.45E-10    &--          &  7.84E-11    &0.150       &  2.11E-07    &3.014    

      & 4.67E+01  &  2.006       & 3.56E-01  &  3.033       & 2.31E-06  &  3.965     \\
      89001  &  1.09E-09    &--          &  1.76E-10    &--          &  6.26E-08    &3.008    
      & 2.08E+01  &  2.004       & 1.05E-01  &  3.013       & 5.99E-07  &  3.343     \\
      157872 &  1.60E-09    &--          &  2.72E-10    &--          &  2.64E-08    &3.006     
      & 1.17E+01  &  2.003       & 4.43E-02  &  3.008       & 2.36E-07  &  3.247     \\

      246345 &  4.53E-10    &5.665       &  4.84E-11    &7.756       &  1.35E-08    &3.005    
      & 7.49E+00  &  2.002       & 2.27E-02  &  3.005       & 1.16E-07  &  3.189     \\
      \bottomrule
    \end{tabular}
  \caption{Convergence rates for the numerical test of Section \ref{ssec:lif:test} for $\lambda=10^6$ using the Kershaw mesh family.\label{tbl:lif:prf:ker}}
\end{sidewaystable}        

%------------------------------------------------------------------------------%
%------------------------------------------------------------------------------%

\subsection{Two-dimensional lid-driven cavity flow}\label{sec:tests:cavity.2d}

The final numerical test is the classical two-dimensional lid-driven cavity problem.
The computational domain is  the unit square $\Omega=(0,1)^2$ and we initially set $\bef=\boldsymbol{0}$.
Homogeneous (wall) boundary conditions are enforced at all but the top horizontal wall (at $x_2=1$), where we enforce a unit tangential velocity  $\bu=(1,0)$ instead.
In Figure \ref{fig:cavity:Re1000} we report the horizontal component $u_1$ of the velocity along the vertical centerline $x_1=\frac12$ and the vertical component $u_2$ of the velocity along the horizontal centerline $x_2=\frac12$ for a global Reynolds number $\Reynolds\coloneq\frac1{\nu} = 1000$.
The computation is carried out setting $k=1$ for the finest meshes of the Cartesian, hexagonal, and Kershaw sequences used in the previous section.
Reference solutions from the literature \cite{Ghia.Ghia.ea:82,Erturk.Corke.ea:05} are also included for the sake of comparison.
The numerical solution obtained using the proposed method is in  agreement with the reference results for the selected value of the Reynolds number.

To check the robustness of the method with respect to irrotational body forces, we then run the same test case but with $\bef=\lambda \GRAD \psi$ where $\psi= \frac{1}{3}( x^3 + y^3)$.
This body force is completely irrotational, so the velocity approximation obtained using the proposed method \eqref{eq:nstokes:discrete} should not be affected (and, therefore, should not depend on $\lambda$).
To verify this, we report in Figure \ref{fig:cavity:Re1000:wlambda} computations for $\lambda=10^6$, using $k=1$ and the same meshes as before.
As expected, the velocity profiles are not affected by the value of $\lambda$.
The same plot also contains the results obtained with the original HHO formulation of \cite{Botti.Di-Pietro.ea:19}, but only for the Cartesian mesh and $\lambda=10^3$ (convergence was not achieved for $\lambda=10^6$).
 It can be checked that the non-pressure-robust version of the method converges to a complete different solution.
 
%------------------------------------------------------------------------------%
% Re = 1000
%------------------------------------------------------------------------------%

\begin{figure}\centering
  \begin{tikzpicture}[font=\footnotesize, %
      spy using outlines={magnification=4, size=3cm, connect spies, fill=none} %
    ]
    \begin{axis}[height=9cm, width=9cm, %
        xmin=-1, xmax=1, ymin=0, ymax=1, %
        xlabel={$u_1$}, ylabel={$x_2$}, %
        legend cell align={left},
        legend style = { at={(0,1)}, anchor=north west, draw=none, fill=none }, % 
        axis x line=top, ytick pos=bottom]     
     \addplot  +[mark=none, thick] table [x="U0", y="Points:1", col sep=comma] {\figpath/re1000-2d-p1-cart1_5-strong-bc-ux-vertical-centerline.txt};
      \addplot +[mark=none, thick] table [x="U0", y="Points:1", col sep=comma] {\figpath/re1000-2d-p1-hexa3_5-strong-bc-ux-vertical-centerline.txt};
      \addplot +[mark=none, thick] table [x="U0", y="Points:1", col sep=comma] {\figpath/re1000-2d-p1-kershaw2_5-strong-bc-ux-vertical-centerline.txt};
      \addplot +[mark=o, only marks, mark size=1.4] table [x=Re1000, y=y] {\figpath/ghia-ghia-shin-ux-vertical-centerline.txt};
      \addplot +[mark=+, only marks, mark size=1.4] table [x=Re1000, y=y] {\figpath/erturk-corke-gokcol-ux-vertical-centerline.txt}; 
      \legend{%
        {$k=1$, cartesian mesh},%        
        {$k=1$, hexagonal mesh},%
        {$k=1$, Kershaw mesh},%
        Ghia et al.,%
        Erturk et al.
      }
    \end{axis}
    \begin{axis}[height=9cm, width=9cm, %
        xmin=0, xmax=1, ymin=-1, ymax=1,%
        xlabel={$x_1$}, ylabel={$u_2$}, %
        axis y line=right, xtick pos=left]
      \addplot +[mark=none, thick] table [x="Points:0", y="U1", col sep=comma] {\figpath/re1000-2d-p1-cart1_5-strong-bc-uy-horizontal-centerline.txt};
      \addplot +[mark=none, thick] table [x="Points:0", y="U1", col sep=comma] {\figpath/re1000-2d-p1-hexa3_5-strong-bc-uy-horizontal-centerline.txt};
      \addplot +[mark=none, thick] table [x="Points:0", y="U1", col sep=comma] {\figpath/re1000-2d-p1-kershaw2_5-strong-bc-uy-horizontal-centerline.txt};
      \addplot +[mark=o, only marks, mark size=1.4] table [x=x, y=Re1000] {\figpath/ghia-ghia-shin-uy-horizontal-centerline.txt};
      \addplot +[mark=+, only marks, mark size=1.4] table [x=x, y=Re1000] {\figpath/erturk-corke-gokcol-uy-horizontal-centerline.txt};      
    \end{axis}
    \begin{scope}
      \spy on (0.85,5.00) in node [fill=none, anchor=south east] at (-0.75,4.25);
      \spy on (5.50,7.00) in node [fill=none, anchor=south west] at (8.35,4.25); 
      \spy on (6.70,2.05) in node [fill=none, anchor=south west] at (8.35,0.25);
      \spy on (2.30,1.15) in node [fill=none, anchor=south east] at (-0.75,0.25);
    \end{scope}
  \end{tikzpicture}
  \caption{Two-dimensional lid-driven cavity flow, horizontal component $u_1$ of the velocity along the vertical centerline $x_1=\frac12$ and the vertical component $u_2$ of the velocity along the horizontal centerline $x_2=\frac12$ for $\Reynolds=\pgfmathprintnumber{1000}$.\label{fig:cavity:Re1000}}
\end{figure}
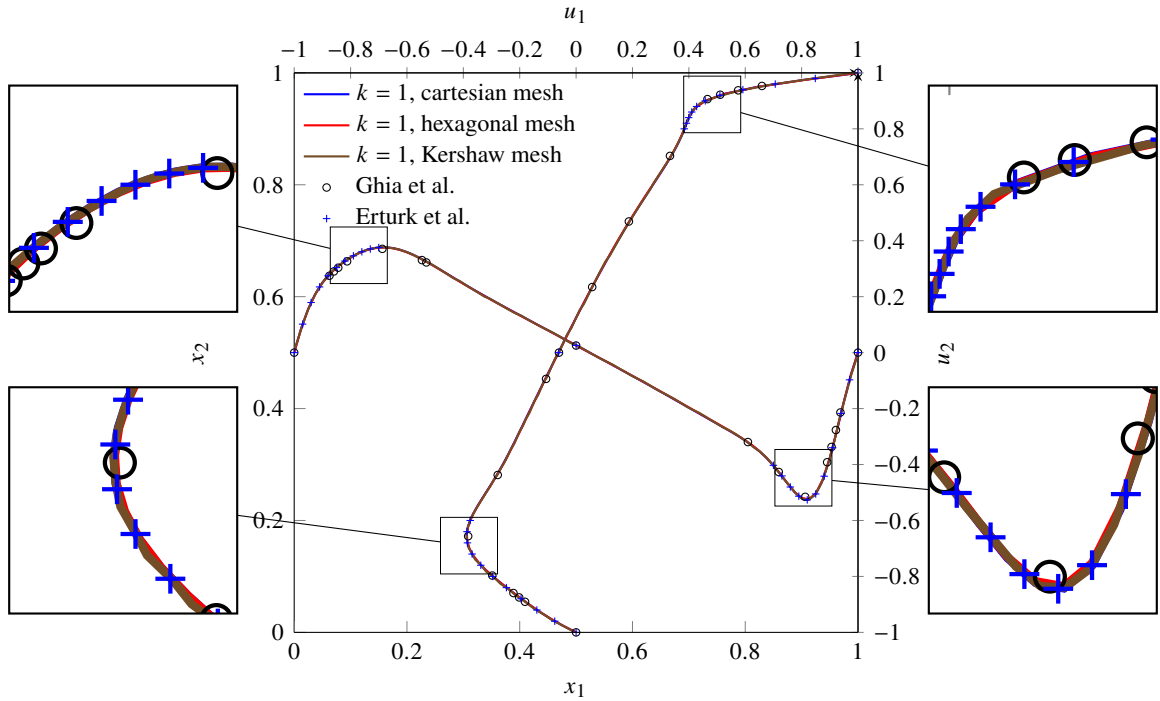

%------------------------------------------------------------------------------%
% Re = 1000 Comparison
%------------------------------------------------------------------------------%

\begin{figure}\centering
  \begin{tikzpicture}[font=\footnotesize]
    \begin{axis}[height=9cm, width=9cm, %
        xmin=-1, xmax=1, ymin=0, ymax=1, %
        xlabel={$u_1$}, ylabel={$x_2$}, %
        legend cell align={left},
        legend style = { at={(1.25,1)}, anchor=north west },
        axis x line=top, ytick pos=bottom]

      \addplot +[mark=none, thick, color=blue] table [x="U0", y="Points:1", col sep=comma] {\figpath/re1000-2d-p1-cart1_5-strong-bc-ux-vertical-centerline-lambda-10-6.txt};
      \addplot +[mark=none, thick, color=green] table [x="U0", y="Points:1", col sep=comma] {\figpath/re1000-2d-p1-hexa3_5-strong-bc-ux-vertical-centerline-lambda-10-6.txt};
      \addplot +[mark=none, thick, color=black] table [x="U0", y="Points:1", col sep=comma] {\figpath/re1000-2d-p1-kershaw2_5-strong-bc-ux-vertical-centerline-lambda-10-6.txt};
      \addplot +[mark=none, thick, color=red] table [x="U0", y="Points:1", col sep=comma] {\figpath/re1000-2d-p1-cart1_5-strong-bc-ux-vertical-centerline-npr-lambda-10-3.txt};
      \addplot +[mark=o, only marks, mark size=1.4,color=black] table [x=Re1000, y=y] {\figpath/ghia-ghia-shin-ux-vertical-centerline.txt};
      \addplot +[mark=+, only marks, mark size=1.4,color=blue] table [x=Re1000, y=y] {\figpath/erturk-corke-gokcol-ux-vertical-centerline.txt}; 
 
      \legend{%
        {$\lambda=10^6$, Cartesian, present m.},%                
        {$\lambda=10^6$, hexagonal, present m.},%                
        {$\lambda=10^6$, Kershaw, present m.},%                
        {$\lambda=10^3$, Cartesian, \cite{Botti.Di-Pietro.ea:19}},%
        Ghia et al.,%
        Erturk et al.
      };  
    \end{axis}
    \begin{axis}[height=9cm, width=9cm, %
        xmin=0, xmax=1, ymin=-1, ymax=1,%
        xlabel={$x_1$}, ylabel={$u_2$}, %
        axis y line=right, xtick pos=left]

      \addplot +[mark=none, thick, color=blue] table [x="Points:0", y="U1", col sep=comma] {\figpath/re1000-2d-p1-cart1_5-strong-bc-uy-horizontal-centerline-lambda-10-6.txt};      
      \addplot +[mark=none, thick, color=green] table [x="Points:0", y="U1", col sep=comma] {\figpath/re1000-2d-p1-hexa3_5-strong-bc-uy-horizontal-centerline-lambda-10-6.txt};      
      \addplot +[mark=none, thick, color=black] table [x="Points:0", y="U1", col sep=comma] {\figpath/re1000-2d-p1-kershaw2_5-strong-bc-uy-horizontal-centerline-lambda-10-6.txt};      
      \addplot +[mark=none, thick, color=red] table [x="Points:0", y="U1", col sep=comma] {\figpath/re1000-2d-p1-cart1_5-strong-bc-uy-horizontal-centerline-npr-lambda-10-3.txt};      
      \addplot +[mark=o, only marks, mark size=1.4,color=black] table [x=x, y=Re1000] {\figpath/ghia-ghia-shin-uy-horizontal-centerline.txt};
      \addplot +[mark=+, only marks, mark size=1.4,color=blue] table [x=x, y=Re1000] {\figpath/erturk-corke-gokcol-uy-horizontal-centerline.txt};      
 
    \end{axis}
  \end{tikzpicture}
  \caption{Two-dimensional lid-driven cavity flow with irrotational force $\bef = \lambda \GRAD \psi$ with $\lambda =10^6$.
    Comparison between the present method and the original HHO formulation of \cite{Botti.Di-Pietro.ea:19} both using $k=1$.
    The plot represents the horizontal component $u_1$ of the velocity along the vertical centerline $x_1=\frac12$ and the vertical component $u_2$ of the velocity along the horizontal centerline $x_2=\frac12$ for $\Reynolds=\pgfmathprintnumber{1000}$.\label{fig:cavity:Re1000:wlambda}}
\end{figure}
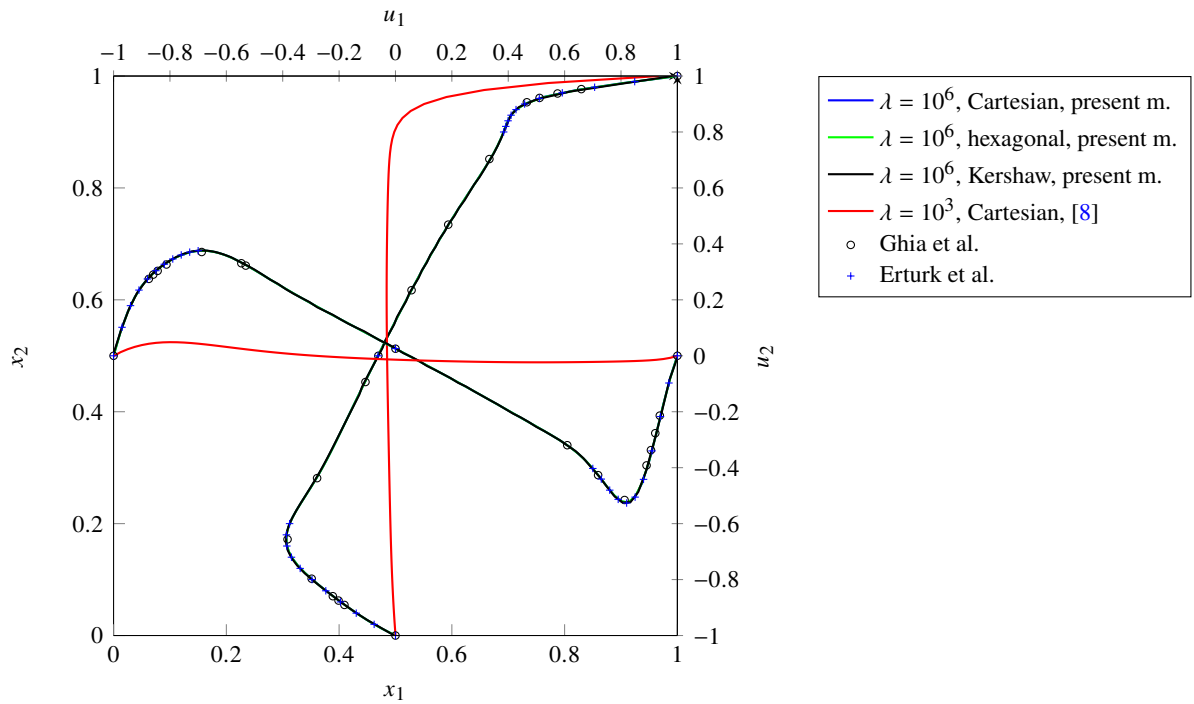

%------------------------------------------------------------------------------%

\section*{Acknowledgements}
{The authors are grateful for the suggestions of an anonymous referee which contributed to improving the quality of the manuscript.}
Daniele Di Pietro acknowledges the partial support of \emph{Agence Nationale de la Recherche} grant ANR-20-MRS2-0004 ``NEMESIS'' and I-Site MUSE grant ANR-16-IDEX-0006 ``RHAMNUS''.

%------------------------------------------------------------------------------%
\printbibliography 
%------------------------------------------------------------------------------%

\appendix

\section{Proof of Lemma \ref{lemma:appx}}\label{sec:appx}

\begin{proof}
{For this proof we take inspiration mainly from \cite[Section 3]{Kreuzer:2021}.}
  First of all, let us introduce a few new definitions. 
  We denote by $\hat{\tau}$ the reference tetrahedron. From the assumptions of Section \ref{sec:setting:mesh}, there exists $\bx_T\in \mathbb{R}^3$ which is a common vertex
  for all simplices in $\Ths[T]$.
  Then, for each $\tau\in\Ths[T]$, it is possible to construct a one-to-one affine map $\bF_\tau: \hat{\tau} \rightarrow \tau$ such that
  \begin{equation}\label{eq:appx.affinemap}
    \bF_\tau= \mxJ_\tau\hat{\bx} + \bx_T,
  \end{equation}
  where $\mxJ_\tau$ is an invertible real matrix of size $3\times3$.
  Now,
  given $\tau\in \Ths[T]$ and {$\hat{\bv},\hat{\bw}\in\Poly{l}(\hat{\tau})^3$} where {$l\geq 0$} we introduce, respectively, the contravariant and covariant Piola's transformations (see \cite{Ern.Guermond:21:I}) as follows,
  \begin{align}\label{def:piola}
    \bpsi_{d,\tau}(\hat{\bv})\coloneq |\det \mxJ_\tau|^{-1} \mxJ_\tau (\hat{\bv} \circ \bF_\tau^{-1})
    &&\text{and}&&
    \bpsi_{c,\tau}(\hat{\bv})\coloneq \mxJ_\tau^{-\text{T}} (\hat{\bv} \circ \bF_\tau^{-1}),
  \end{align}
  which crucially satisfy
  \begin{equation}\label{eq:int.Piola}
    \int_\tau 
    \bpsi_{d,\tau}(\hat{\bv})\cdot\bpsi_{c,\tau}(\hat{\bw})
    =
    \int_{\hat{\tau}} \hat{\bv}\cdot\hat{\bw}.
  \end{equation}
  %%In addition, it is well known that $\bpsi_{d,\tau}(\cdot)$ satisfies
  %%\begin{equation}\label{eq:div.dPiola}
  %%\DIV\bpsi_{d,\tau}(\hat{\bv}) =|\det \mxJ_\tau|^{-1}\DIV(\hat{\bv}\circ\bF_\tau^{-1}).
  %%\end{equation}

  We now introduce the space $\Goly{{\rm c},k-1}(\hat{\tau})\coloneq\hat{\bx}\times\Poly{k-2}(\hat{\tau})^3$, and the operator
  $\hat{\bE}^{k-1}_{\hat{\tau}}: L^2(\hat{\tau})^3 \rightarrow \Goly{{\rm c},k-1}(\hat{\tau})$
  such that, for all $\hat{\bv}\in L^2(\hat{\tau})^3$ and all $\hat{\bg}\in\Goly{{\rm c},k-1}(\hat{\tau})$,
  \begin{equation}\label{eq:appx.def.Ehat}
    \int _{\hat{\tau}} \hat{\lambda}^2{\ROTh}\hat{\bE}^{k-1}_{\hat{\tau}}(\hat{\bv})\cdot{\ROTh}\hat{\bg} =\int _{\hat{\tau}}\hat{\bv}\cdot\hat{\bg}, 
  \end{equation}
  where $\hat{\lambda}$ is defined as the product of all the barycentric coordinates of ${\hat{\bx}}$ in $\hat{\tau}$, i.e., $\hat{\lambda}=\prod_{i=1}^4 \hat{\lambda}_i$.
  The fact that \eqref{eq:appx.def.Ehat} defines $\hat{\bE}^{k-1}_{\hat{\tau}}(\hat{\bv})$ uniquely follows from the Riesz representation theorem after observing that $\ROTh:\Goly{{\rm c},k-1}(\hat{\tau})\to{\Poly{k-2}(\hat{\tau})^3}$ is an isomorphism.
  We then define the operator
  ${\bE}^{k-1}_{\tau}: L^2(\tau)^3 \rightarrow \Poly{k+5}(\tau)^3\cap H^1_0(\tau)^3$ as follows
  (see \cite[Eq. (45)]{Kreuzer:2021})
  \begin{equation}\label{eq:appx.def.E}
    {\bE}^{k-1}_{\tau}(\bw)\coloneq \bpsi_{d,\tau}\left(
    {\ROTh}\left[%
      \hat{\lambda}^2\ROTh\hat{\bE}^{k-1}_{\hat{\tau}}(\bpsi^{-1}_{d,\tau}(\bw))
      \right]
    \right).
  \end{equation}
  %where $\bpsi^{-1}_{d,\tau}(\cdot)$ is the inverse of the contravariant transformation which is expressed as $\bpsi^{-1}_{d,\tau}(\bv)=|\det \mxJ_\tau| \mxJ_\tau^{-1} ({\bv} \circ \bF_\tau)$. 
  Using standard properties of the contravariant transformation $\bpsi_{d,\tau}(\cdot)$, we infer $\DIV{\bE}^{k-1}_{\tau}(\bw)=0$; moreover,
  it is proved in \cite[Proof of Proposition 17]{Kreuzer:2021} that $\norm{{\Ldeuxd[\tau]}}{\bE^{k-1}_{\tau}(\bw)}\lesssim\norm{{\Ldeuxd[\tau]}}{\bw}$. 
  Then, for given a function $\bv\in L^2(T)^3$,
  we define $\bvs_{0,\tau}$ as the interpolate of $\bE^{k-1}_{\tau}(\bv)$ onto the space
  $\RTN{k}(\tau)$, thus we have $\DIV\bvs_{0,\tau}=0$. Additionally, since $\bE^{k-1}_{\tau}(\bv)\in H^1_0(\tau)^3$, $\bvs_{0,\tau}$
  has zero normal trace at the boundary of $\tau$ and,
  using standard interpolation estimates for $\bvs_{0,\tau}$ (see, e.g., \cite[Proposition 2.5.1]{Boffi.Brezzi.ea:13}),
  the bound $\norm{{\Ldeuxd[\tau]}}{\bE^{k-1}_{\tau}(\bv)}\lesssim\norm{{\Ldeuxd[\tau]}}{\bv}$,
  and  a discrete  inverse inequality (this is valid since $\bE^{k-1}_{\tau}(\bv)$ is a polynomial function), it is inferred that
  $\norm{{\Ldeuxd[\tau]}}{\bvs_{0,\tau}}\lesssim\norm{{\Ldeuxd[\tau]}}{\bv}$.
  We now define $\widetilde{\bR}_T^k(\bv)\in\RTN{k}_0(\Ths[T])$ as 
  \begin{equation}\label{def:ws0}
    \widetilde{\bR}_T^k(\bv)_{|\tau} \coloneq \bvs_{0,\tau} \qquad \forall \tau \in \Ths[T],
  \end{equation}
  and observe that $\widetilde{\bR}_T^k(\bv)$ clearly satisfies the properties (\ref{eq:rtn:golyc:b}--\ref{eq:rtn:golyc:d}) from the above discussion.
  \smallskip

  To prove \eqref{eq:rtn:golyc:a},  we  introduce the space
  \begin{equation}\label{def:goly.sT}
    \Goly{{\rm c},k-1}(\Ths[T])\coloneq (\bx-\bx_T) \times \Poly{k-2}(\Ths[T])^3,
  \end{equation}
  and denote the $L^2$-orthogonal projector onto this space by $\bpi^{{{\rm c},k-1}}_{\Goly{},\Ths[T]}$.
  Observe that $\Goly{{\rm c},k-1}(T)\subset\Goly{{\rm c},k-1}(\Ths[T])$, 
  thus 
  $$
  \bpi^{{\rm c},k-1}_{\Goly{},T} ( \bpi^{{\rm c},k-1}_{\Goly{},\Ths[T]}\bw)
  = \bpi^{{\rm c},k-1}_{\Goly{},T}\bw
  \qquad\forall \bw\in L^2(T),
  $$
  then \eqref{eq:rtn:golyc:a} holds a fortiori if we prove that
  \begin{equation}\label{eq:appx:toprove}
    \bpi^{{\rm c},k-1}_{\Goly{},\Ths[T]}\widetilde{\bR}_T^k(\bv)
    = \bpi^{{\rm c},k-1}_{\Goly{},\Ths[T]}\bv.
  \end{equation}
  To prove it, let $\bg\in \Goly{{\rm c},k-1}(\Ths[T])$ and
  $\tau\in\Ths[T]$.
  Then, using the definition \eqref{def:ws0} of $\widetilde{\bR}_T^k(\bv)$, we obtain, for all $\tau\in\Ths[T]$,
  \begin{equation}\label{eq:apx:I}
    \int _\tau \widetilde{\bR}_T^k(\bv)\cdot \bg 
    =\int _\tau  \bvs_{0,\tau} \cdot \bg 
    =\int _\tau  \bE^{k-1}_{\tau}(\bv)\cdot \bg 
    =
    \int_{\hat{\tau}}
    \bpsi^{-1}_{d,\tau} (\bE^{k-1}_{\tau}(\bv))\cdot \bpsi^{-1}_{c,\tau}(\bg)
    \eqcolon\mathfrak{T},
  \end{equation}
  where in the second step we have used the interpolation properties of $\bvs_{0,\tau}$ and the fact that $\bg_{|\tau}\in\Poly{k-1}(\tau)^3$ along with the definition of the Raviart--Thomas interpolator, and in the last step the definitions \eqref{def:piola} of the Piola transformations and the identity \eqref{eq:int.Piola}.
  By definition \eqref{def:goly.sT},
  we have that $\bg_{|\tau}=(\bx-\bx_T)\times \bq_{\bg}$ where $\bq_{\bg}\in\Poly{k-2}(\tau)^3$.
  With this mind, and using \eqref{eq:appx.affinemap}, we get that 
  \begin{equation}\label{eq:surj.cpiola}
    \bpsi^{-1}_{c,\tau}(\bg)(\hat{\bx})
    =(\mxJ_\tau^{-1})^{-\text{T}}\left( \mxJ_\tau\hat{\bx} \times \bq_g(\bF_\tau(\hat{\bx}))\right)
    = \hat{\bx} \times \det(\mxJ_\tau)\mxJ_\tau^{-1} \bq_g(\bF_\tau(\hat{\bx})),
  \end{equation}
  where in the last step we have used the matrix-cross-product identity
  (see \cite[Ex.9.5]{Ern.Guermond:21:I}) $\mxA^{-\text{T}}(\by\times\bz)=\det (\mxA)^{-1}\mxA\by\times\mxA\bz$ valid for any $\by,\bz\in\mathbb{R}^3$ and any invertible real matrix $\mxA$ of size $3\times3$.
  Now, define
  $\widetilde{\bq}_g\coloneq\det(\mxJ_\tau)\mxJ_\tau^{-1} \bq_g(\bF_\tau(\hat{\bx}))\in\Poly{k-2}(\hat{\tau})^3$.
  Then, using the definition \eqref{eq:appx.def.E}, we compute $\mathfrak{T}$ in \eqref{eq:apx:I} as follows:
  \begin{align*}
    \mathfrak{T}
    &=
    \int_{\hat{\tau}}
    {\ROTh}\left(\hat{\lambda}^2{\ROTh}\hat{\bE}^{k-1}_{\hat{\tau}}(\bpsi^{-1}_{d,\tau}(\bv))\right)
    \cdot
    (\hat{\bx}\times \widetilde{\bq}_g)\\
    &=
    \int_{\hat{\tau}}
    \hat{\lambda}^2{\ROTh}\hat{\bE}^{k-1}_{\hat{\tau}}(\bpsi^{-1}_{d,\tau}(\bv))
    \cdot
    {\ROTh}{} (\hat{\bx}\times \widetilde{\bq}_g)\\
    &=
    \int_{\hat{\tau}}
    \bpsi^{-1}_{d,\tau}(\bv)
    \cdot
    (\hat{\bx}\times \widetilde{\bq}_g)
    =
    \int_{\tau}
    \mxJ^{-1}_\tau\bv
    \cdot
    \mxJ^{\text{T}}_\tau
    ((\bx-\bx_T)\times{\bq}_g)
    =
    \int_{\tau}
    \bv
    \cdot
    \bg,
  \end{align*}
  where, in the second line, we have used integration by parts,
  along with the fact that $\hat{\lambda}^2{\ROTh}\hat{\bE}^{k-1}(\cdot)$ vanishes at the boundary of $\hat{\tau}$,
  in the third line first the definition \eqref{eq:appx.def.Ehat},
  then a change of coordinates using \eqref{eq:appx.affinemap} along with the definitions \eqref{def:piola}
  and the same matrix-cross-product identity as before, and finally some standard  properties of the transpose.
  Thus, using the last equation above and \eqref{eq:apx:I}, we have that 
  \begin{equation}\label{eq:appx.lasteq}
    \int_T\widetilde{\bR}_T^k(\bv)\cdot\bg
    =\sum_{\tau\in\Ths[T]}\int_\tau\widetilde{\bR}_T^k(\bv)\cdot\bg
    =\sum_{\tau\in\Ths[T]}\int_\tau\bv\cdot\bg
    =\int_T\bv\cdot\bg.
  \end{equation}
  Since $\bg$ is an arbitrary element of $\Goly{{\rm c},k-1}(\Ths[T])$, it implies \eqref{eq:appx:toprove}, and we conclude.
\end{proof}

\begin{remark}[The common vertex  assumption]\label{rem:cvertex.prop}
  In  the proof of Lemma \ref{lemma:appx}, the fact that $ \bx_T$ is a common vertex for all simpex $\tau \in \Ths[T]$ allows to express the affine transformation $\bF_\tau: \hat{\tau} \rightarrow \tau$  as \eqref{eq:appx.affinemap} implying the key property  that the covariant transformation $\bpsi_{c,\tau}:\Goly{{\rm c},k-1}(\hat{\tau}) \rightarrow ({\bx}-\bx_T)\times\Poly{k-2}({\tau})^3$,  defined in  \eqref{def:piola}, is an isomorphism. This  is required for \eqref{eq:surj.cpiola} and \eqref{eq:appx.lasteq}, and then  making possible to prove \eqref{eq:rtn:golyc:a} and \eqref{eq:rtn:golyc:d}. 
\end{remark}

%------------------------------------------------------------------------------%

\end{document}